\newtheorem{theorem}{Theorem}[section]
\newtheorem{proposition}[theorem]{Proposition}
\newtheorem{corollary}[theorem]{Corollary}
\newtheorem{lemma}[theorem]{Lemma}
\begin{document}
\begin{frontmatter}

\title{Askey--Wilson polynomials, quadratic harnesses and martingales}
\runtitle{Askey--Wilson polynomial martingales}

\begin{aug}
\author[A]{\fnms{W{\l}odek} \snm{Bryc}\corref{}\thanksref{t1}\ead[label=e1]{Wlodzimierz.Bryc@UC.edu}\ead[label=u1,url]{http://math.uc.edu/\texttildelow brycw}} and
\author[B]{\fnms{Jacek} \snm{Weso{\l}owski}\ead[label=e2]{wesolo@mini.pw.edu.pl}\ead[label=u2,url]{http://www.mini.pw.edu.pl/tiki-index.php?page=prac\_wesolowski\_jacek}}
\runauthor{W. Bryc and J. Weso{\l}owski}
\affiliation{University of Cincinnati and Warsaw University of Technology}
\address[A]{Department of Mathematics\\
University of Cincinnati\\
PO Box 210025\\
Cincinnati, Ohio 45221--0025\\
USA\\
\printead{e1}\\
\printead{u1}}
\address[B]{Faculty of Mathematics\\
\quad and Information Science\\
Warsaw University of Technology\\
Pl. Politechniki 1\\
00-661 Warszawa\\
Poland\\
\printead{e2}}
\end{aug}

\thankstext{t1}{Supported in part by NSF
Grant DMS-09-04720 and by Taft Research Seminar 2008-09.}

% HISTORY:
\received{\smonth{12} \syear{2008}}
\revised{\smonth{7} \syear{2009}}

% ABSTRACT
%
\begin{abstract}
We use orthogonality measures of Askey--Wilson polynomials to construct
Markov processes with linear regressions and quadratic conditional
variances. Askey--Wilson polynomials are orthogonal martingale
polynomials for these processes.
\end{abstract}

% KEYWORDS
%
\begin{keyword}[class=AMS]
\kwd[Primary ]{60J25}
\kwd[; secondary ]{46L53}.
\end{keyword}
\begin{keyword}
\kwd{Quadratic conditional variances}
\kwd{harnesses}
\kwd{orthogonal martingale polynomials}
\kwd{hypergeometric orthogonal polynomials}.
\end{keyword}

\end{frontmatter}

%s1 ###
\section{Introduction}
Orthogonal martingale polynomials for stochastic processes have been
studied by a number of authors (see
\cite
{BakryMazet03,Feinsilver86,Lytvynov03,NualartSchouten00,Schoutens00,SchoutensTeugels98,SoleUtzet08,SoleUtzet08b}).
Orthogonal martingale polynomials play also a prominent role in
noncommutative probability
\cite{Anshelevich01,Anshelevich04IMRN} and can serve
as a connection to the so called ``classical versions'' of
noncommutative processes. On the other hand, classical versions may
exist without polynomial martingale structure (see \cite{Biane98}).
In \cite{BrycMatysiakWesolowski04} we identify intrinsic
properties of the first two conditional moments of a
stochastic process that guarantee the
process has orthogonal martingale
polynomials. These properties, linear conditional expectations and
quadratic conditional variances, which we call the quadratic harness
properties, have already lead to a number of new examples of Markov
processes
\cite{BrycMatysiakWesolowski04b,BrycWesolowski03,BrycWesolowski04}
with orthogonal martingale polynomials. Random fields with harness
properties were introduced by Hammersley \cite{Hammersley} and their
properties were studied (see, e.g., \cite{MansuyYor04,Williams73}).

In this paper we use measures of orthogonality of Askey--Wilson
polynomials to construct a large class of Markov processes with
quadratic harness properties that includes most of the previous examples,
either as special cases or as ``boundary cases.''
The main step is the construction of an auxiliary Markov process which
has Askey--Wilson polynomials \cite{AskeyWilson85} as
orthogonal martingale polynomials.
The question of probabilistic interpretation
of Askey--Wilson polynomials was raised in \cite{Diaconis08r}, page 197.

The paper is organized as follows. In the remainder of this section we
recall background material on the
quadratic harness property and Askey--Wilson polynomials; we also state
our two main results. In Section \ref{S:elementary} we give an elementary
construction that does not cover the entire range of parameters, but
it is explicit and does not rely on orthogonal polynomials. The
general construction appears in Section \ref{S:general}; this proof
follows the method from \cite{BrycMatysiakWesolowski04b} and
relies on the martingale property of Askey--Wilson polynomials which
extends a projection formula from \cite{NassrallahRahman85} to a
larger range of parameters.
Section \ref{S:discrete} contains another elementary but
computationally more
cumbersome construction of a purely discrete quadratic harness which is
not covered by Theorem \ref{Thm-QH-q}. Section \ref{S:WoE}
illustrates how some of our previous constructions, and some new cases,
follow from Theorem \ref{Thm-QH-q} essentially by a calculation.
In the \hyperref[app]{Appendix}, we discuss two results on orthogonal
polynomials in
the form we need in this paper: a version of Favard's theorem that does
not depend on the support of the orthogonality measure and a version of
connection coefficients formula for Askey--Wilson polynomials, \cite
{AskeyWilson85}.

%s1.1 ###
\subsection{Quadratic harnesses}\label{S:QH}
In \cite{BrycMatysiakWesolowski04} the authors consider
square-integrable stochastic processes on $(0,\infty)$
such that
for all $t,s> 0$,
%
%e1.1 ###
%
\begin{equation}\label{EQ:cov}
{\mathbb E}(X_t)=0,\qquad {\mathbb E}(X_tX_s)=\min\{t,s\},
\end{equation}
$E({X_t}|{\mathcal{F}_{ s, u}})$ is a linear function of $X_s,X_u$ and
$\operatorname{Var}[X_t|\mathcal{F}_{s,u }]$ is a quadratic function
of $X_s,X_u$.
Here, $\mathcal{F}_{ s, u}$ is the two-sided
$\sigma$-field generated by $\{X_r\dvtx r\in(0,s]\cup[u,\infty)\}$. We
will also use the one-sided $\sigma$-fields $\mathcal{F}_{ t }$
generated by $\{X_r\dvtx r\leq t\}$.

Then for all $s<t<u$,
%
%e1.2 ###
%
\begin{equation}
\label{EQ:LR} {\mathbb E}({X_t}|{\mathcal{F}_{ s, u}})=\frac{u-t}{u-s}
X_s+\frac{t-s}{u-s} X_u
\end{equation}
and under certain technical assumptions, Bryc, Matysiak and Wesolowski
\cite{BrycMatysiakWesolowski04}, Theorem 2.2,
assert that there exist numerical constants
$\eta,\theta,\sigma,\tau,\gamma$ such that
for all $s<t<u$,
%
%e1.3 ###
%
\begin{eqnarray}\label{EQ:q-Var}\quad
&&\operatorname{Var}[X_t|\mathcal{F}_{s,u }]\nonumber\\
&&\qquad= \frac{(u-t)(t-s)}{u(1+\sigma s)+\tau-\gamma s} \biggl( 1+ \sigma
\frac{(uX_s-sX_u)^2}{(u-s)^2}+\eta\frac{uX_s-sX_u}{u-s}
\nonumber\\[-8pt]\\[-8pt]
&&\hspace*{130.5pt}{} + \tau\frac{(X_u-X_s)^2}{(u-s)^2}+\theta\frac{X_u-X_s}{u-s}
\nonumber\\
&&\hspace*{138.7pt}{} -(1-\gamma)\frac{(X_u-X_s)(uX_s-sX_u)}{(u-s)^2} \biggr).\nonumber
\end{eqnarray}
%
% In this paper it is convenient to extend
% defined on a time domain $T$ which
%may be a proper subset of $[0,\infty)$, and we
%relax condition \eqref{EQ: cov} to allow more general covariances.
%(The latter is not a significant generalization as according to
%under assumption \eqref{EQ: LR} can only be generalized to
%%\begin{definition}
%Accordingly, w
We will say that a square-integrable stochastic process $(X_t)_{t\in
T}$ is a quadratic harness on $T$ with parameters $(\eta,\theta
,\sigma,\tau,\gamma)$
if it satisfies (\ref{EQ:LR}) and (\ref{EQ:q-Var}) on $T$ which may
be a proper subset of $(0,\infty)$. In previous papers (see, e.g.,
\cite{BrycMatysiakWesolowski04}) only $T=(0,\infty)$ was considered.

Under the conditions listed in \cite{BrycMatysiakWesolowski04},
Theorems 2.4 and 4.1, quadratic harnesses on
$(0,\infty)$ have orthogonal martingale polynomials. Although
several explicit three-step recurrences have been worked out in \cite
{BrycMatysiakWesolowski04},
Section 4, and even though, for some of the
recurrences, corresponding
quadratic harnesses were constructed in a series of papers
\cite{BrycMatysiakWesolowski04b,BrycWesolowski03,BrycWesolowski04},
the general orthogonal martingale polynomials have not been
identified, and the question of existence of corresponding quadratic
harnesses was left open.

It has been noted that the family of all quadratic harnesses on
$(0,\infty)$ that satisfy condition (\ref{EQ:cov}) is invariant
under the action of
translations and reflections of ${\mathbb R}$: translation by $a\in
{\mathbb R}$
acts as $(X_t)\mapsto e^{-a}X_{e^{2a}t}$ and the reflection at $0$ acts
as $(X_t)\mapsto(tX_{1/t})$. Since translations and reflection
generate also the symmetry group of the Askey--Wilson polynomials
\cite{MasatoshiStokman04}, it is natural to investigate how to
relate the measures of orthogonality of the Askey--Wilson polynomials
to quadratic harnesses.
The goal of this paper is to explore this idea and significantly
enlarge the class of
available examples. We show that quadratic harnesses exist and are
Markov processes for a wide range of parameters
$(\eta,\theta,\sigma,\tau,\gamma)$.
The basic Markov process we construct has Askey--Wilson polynomials as
orthogonal martingale polynomials. The final quadratic harness is
then obtained by
appropriate scaling and a deterministic change of time.
%
%%\comment{Should we move this down to "Conclusions"?}
%Since Askey-Wilson polynomials together with their limits are regarded
%as the most general classical polynomials, it is natural to inquire
%whether all quadratic harnesses can be obtained by taking limits of
%processes with Askey-Wilson transition probabilities.
%
\begin{theorem}
\label{Thm-QH-q}
Fix parameters $-1<q<1$ and $A,B,C,D$ that are either real, or
$(A,B)$ or $(C,D)$ are complex conjugate pairs
such that $ABCD$, $qABCD<1$. Assume that
%
%e1.4 ###
%
\begin{equation}
\label{ABCD}AC, AD, BC,BD, qAC, qAD, qBC,qBD \in{\mathbb C}\setminus
[1,\infty) .
\end{equation}
Let
%
%e1.9 ###
%e1.8 ###
%e1.7 ###
%e1.6 ###
%e1.5 ###
%
\begin{eqnarray}
\label{eta}
\eta&=&-\frac{ [(A+B)(1+ABCD)-2AB(C+D) ] \sqrt{1-q}}{\sqrt{(1-A C)
(1-B C) (1-A D) (1-B D) (1- q A B C D)}} ,\\
\label{theta}
\theta&=&-\frac{ [(D+C)(1+ABCD) -2 C D(A +B) ] \sqrt{1-q}}{\sqrt
{(1-A C) (1-B C) (1-A D) (1-B D) (1-q A B C D)}} ,\\
\label{sigma}
\sigma&=&
\frac{A B (1-q)}{1-q A B C D},
\\
\label{tau}
\tau&=& \frac{C D (1-q)}{1- q A B C D},
\\
\label{gamma}
\gamma&=& \frac{q-A B C D}{1- q A B C D}.
\end{eqnarray}
With the convention $1/\infty=0$, let
%
%e1.11 ###
%e1.10 ###
%
\begin{eqnarray}
\label{T0-greek}
T_0&=& \max
\biggl\{0,\frac{\gamma-1+\sqrt{(\gamma-1)^2-4 \sigma\tau
}}{2 \sigma},-\tau\biggr\},\\
\label{T1-greek}
\frac{1}{ T_1}&=&\max\biggl\{0,\frac{\gamma-1+\sqrt{(\gamma-1)^2-4
\sigma\tau
}}{2 \tau},-\sigma\biggr\}.
\end{eqnarray}

Then there exists a bounded Markov process $(X_t)_{t\in J}$ on the
nonempty interval $J=(T_0,T_1)$
with mean and covariance (\ref{EQ:cov}) such that
(\ref{EQ:LR}) holds,
and (\ref{EQ:q-Var}) holds with parameters $\eta,\theta,\sigma
,\tau,\gamma$. Process
$(X_t)_{t\in J}$ is unique among the processes with
infinitely-supported one-dimensional distributions that have moments of
all orders
and satisfy (\ref{EQ:cov}), (\ref{EQ:LR})
and (\ref{EQ:q-Var}) with the same parameters, $\eta,\theta,\sigma
,\tau,\gamma$.
\end{theorem}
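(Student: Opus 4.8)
The plan is to build the process $(X_t)_{t\in J}$ from an auxiliary Markov process whose transition probabilities are the orthogonality measures of the Askey--Wilson polynomials, and then obtain the quadratic harness by a deterministic scaling and change of time, exactly as announced in the introduction. First I would recall the Askey--Wilson orthogonality measure $\nu(dx;a,b,c,d\,|\,q)$ and its absolutely continuous-plus-atoms description from \cite{Askey-Wilson-85}; the four parameters $a,b,c,d$ will be allowed to move with time. The natural choice is to let two of the parameters be fixed (built from $A,B$, say) and two of them carry the time variable (built from $C,D$ scaled by a power of a time-like variable), so that the family of measures is consistent under the Askey--Wilson connection/projection formula. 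I would then define $p_{s,t}(x,dy)$ as the appropriate Askey--Wilson measure in $y$ with parameters depending on $s,t,x$, and verify the Chapman--Kolmogorov equations; here the key input is the martingale property of the Askey--Wilson polynomials, which the introduction promises as an extension of the Nassrallah--Rahman projection formula \cite{Nassrallah-Rahman85} to the full parameter range. Once Chapman--Kolmogorov holds, Kolmogorov's extension theorem produces a Markov process, and boundedness is immediate because the Askey--Wilson measures are supported in a fixed compact subset of $\real$.

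Next I would establish \eqref{EQ: cov}, \eqref{EQ: LR} and \eqref{EQ: q-Var} for the auxiliary process. Because the Askey--Wilson polynomials are orthogonal martingale polynomials, the linear regression \eqref{EQ: LR} and the quadratic conditional variance \eqref{EQ: q-Var} reduce to identities about the three-term recurrence coefficients of the Askey--Wilson polynomials, matched against the ansatz of \cite[Theorem 2.4 and Theorem 4.1]{Bryc-Matysiak-Wesolowski-04}. Concretely, I would compute $\E(X_t)$, $\E(X_tX_s)$, $\E(X_t\mid\mathcal F_s)$ and $\E(X_t\mid\mathcal F_{s,u})$ from the two-sided martingale polynomial structure (the forward martingale together with a backward martingale obtained from the reflection symmetry of Askey--Wilson polynomials \cite{Masatoshi-Stokman-04}), and then read off $\eta,\theta,\sigma,\tau,\gamma$; the formulas \eqref{eta}--\eqref{gamma} are precisely what this matching forces, after the rescaling that normalizes the covariance to $\min\{s,t\}$. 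The interval endpoints \eqref{T0-greek}--\eqref{T1-greek} arise as the range of the time parameter for which the shifted Askey--Wilson parameters still satisfy the admissibility condition \eqref{ABCD} (equivalently, stay in $\comp\setminus[1,\infty)$), which is exactly where the orthogonality measure is a genuine probability measure with infinite support; this is why $J$ has positive length under the stated hypotheses on $q,A,B,C,D$.

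For the uniqueness clause I would invoke \cite[Theorem 2.2]{Bryc-Matysiak-Wesolowski-04} (and the accompanying moment results): a process with infinitely supported one-dimensional marginals, moments of all orders, and satisfying \eqref{EQ: cov}, \eqref{EQ: LR}, \eqref{EQ: q-Var} with fixed parameters has its conditional moments, hence all its mixed moments, determined recursively; since the marginals are supported in a bounded set they are moment-determinate, and the finite-dimensional distributions follow by the same recursion together with the Markov property implied by the orthogonal martingale polynomial structure. So uniqueness is essentially bookkeeping once the existence construction is in place.

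The main obstacle is the martingale property of the Askey--Wilson polynomials over the full admissible parameter range: the classical projection formula of \cite{Nassrallah-Rahman85} is stated under stronger positivity/range restrictions, and extending it — equivalently, proving Chapman--Kolmogorov for the candidate transition kernels when some parameters are complex conjugates or when atoms are present — requires care with the analytic continuation of the connection coefficients and with the discrete part of the measure. I expect that step, rather than the (lengthy but mechanical) verification of \eqref{eta}--\eqref{gamma} and the endpoint computation \eqref{T0-greek}--\eqref{T1-greek}, to be where the real work lies; it is presumably carried out in Section \ref{S: general} following \cite{Bryc-Matysiak-Wesolowski-04b}.
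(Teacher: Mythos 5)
Your plan coincides with the paper's actual proof: an auxiliary Markov process with Askey--Wilson marginals $\nu(dy;A\sqrt t,B\sqrt t,C/\sqrt t,D/\sqrt t)$ and transition kernels whose Chapman--Kolmogorov equations are verified through the connection coefficients and the extended projection (martingale) property of the Askey--Wilson polynomials, followed by the scaling \eqref{Z} and the M\"obius time change \eqref{Def:X}, with uniqueness obtained from orthogonal martingale polynomials and compact support. You also correctly locate the real work in extending the Nassrallah--Rahman formula to the atomic and complex-parameter range, which the paper carries out via the Favard-type case analysis of Lemma \ref{L-AW-criter}, Lemma \ref{L-atomy} and Proposition \ref{Positivity}; the only detail that differs is that the two-sided conditional moments are read off from the $q$-commutation $[\mathbf{x},\mathbf{y}]_q=\mathbf{I}$ of the Jacobi-matrix components rather than from a backward martingale.
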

\begin{remark} Formula (\ref{Def:X}) relates process $(X_t)$ to the
Markov process $(Z_t)$ from
Theorem \ref{T3}.
\end{remark}
\begin{remark}
The assumptions on $A,B,C,D$ are dictated by the desire to limit the
number of cases in the proof but do not exhaust all possibilities
where the quadratic harness $(X_t)$ with Askey--Wilson transition
probabilities exists (see Proposition \ref{T2}).
\end{remark}
\begin{remark}\label{R1.1}When $\sigma\tau\geq0$,
Theorem \ref{Thm-QH-q} can be used to construct quadratic harnesses
only for parameters in the range $-1<\gamma<1-2\sqrt{\sigma\tau}$
which is strictly smaller than the admissible range in \cite
{BrycMatysiakWesolowski04},
Theorem 2.2. To see the upper bound, note that
%
%e1.12 ###
%
\begin{equation}
\label{gamma-1}
1-\gamma=(1-q)(1-ABCD)/(1-qABCD)>0
\end{equation}
and that
$(1-\gamma)^2-4 \sigma
\tau=(1-ABCD)^2(1-q)^2/(1-qABCD)^2>0$. The lower bound follows from
$q>-1$, as (\ref{gamma}) defines $\gamma$ as an increasing function
of $q$.
In Corollary \ref{Th_PQH} we show that the construction indeed works
through the entire range of $\gamma$, at least when $\eta=\theta=0$.

From (\ref{gamma}), the construction will give $\gamma>1$ when
$ABCD<-1$. Multiplying (\ref{sigma}) and (\ref{tau}), we see that
this may occur only when $\sigma\tau<0$, that is,
%for harnesses on finite intervals
when the time interval $J$ is a proper subset of $(0,\infty)$ (compare
\cite{BrycMatysiakWesolowski04}, Theorem 2.2).
\end{remark}
%
%In Theorem \ref{Thm-QH-q}, $(\gamma-1)^2>4 \sigma\tau$. To see
%this, write $(\gamma-1)^2-4 \sigma
%in \cite[Theorem 2.2]{BrycMatysiakWesolowski04} this shows that
%to construct a quadratic harness on $[0,\infty)$ from Theorem
%
\begin{remark}\label{R1.2}
In terms of the original parameters, the
end-points of the interval, are
%
%e1.14 ###
%e1.13 ###
%
\begin{eqnarray}
\label{T0}
T_0&=& \max\biggl\{0, -CD, \frac{-CD(1-q)}{1-qABCD} \biggr\} ,\\
\label{T1}
\frac{ 1}{T_1}&=&\max\biggl\{ 0, -AB, \frac{-AB(1-q)}{1-qABCD} \biggr\} .
\end{eqnarray}
This shows that $T_0,T_1$ are real and $T_0<T_1$. If $CD<0$ or $AB<0$,
then the third term under the maximum contributes for $q<0$ only.
\end{remark}
\begin{remark}\label{R1.3}
As a side property, we also get information about one-sided conditioning:
${\mathbb E}(X_t|\mathcal{F}_s)=X_s$ and $\operatorname
{Var}(X_t|\mathcal{F}_s)=\frac
{t-s}{1+\sigma s}(1+\eta X_s+\sigma X_s^2)$ for $s<t$.
Similarly,
${\mathbb E}(X_t|\mathcal{F}_{\geq u})=t X_u/u$ and $\operatorname
{Var}(X_t|\mathcal
{F}_{\geq u})=\frac{t(u-t)}{u+\tau}(1+\theta X_u/u+\tau X_u^2/u^2)$
for $t<u$, where $\mathcal{F}_{\geq u}=\sigma(X_r\dvtx r\geq u)$.
\end{remark}

%From \eqref{gamma}, the construction will give $\gamma>1$ when
%$ABCD<-1$. Multiplying \eqref{sigma} and \eqref{tau}, we see that this
%may occur only when $\sigma\tau<0$, i.e.
%%for harnesses on finite intervals
%when the time interval $J$ is a proper subset of $(0,\infty)$; compare
% \cite[Theorem 2.2]{BrycMatysiakWesolowski04}.

%s1.2 ###
\subsection{Martingale property of Askey--Wilson polynomials}
\label{S: AW}
For $a,b,c,d\in
\mathbb{C}$ such that
%
%e1.15 ###
%
\begin{equation}
\label{abcd0}
abcd, qabcd\notin[1,\infty),
\end{equation}
Askey and Wilson \cite{AskeyWilson85}, (1.24), introduced polynomials
defined by recurrence,
%
%e1.16 ###
%
\begin{equation}\label{AW-proper}
2x
\widetilde{w}_n(x)= \widetilde A_n\widetilde{w}_{n+1}(x)+
{B}_n\widetilde{w}_n(x)+
\widetilde C_n \widetilde{w}_{n-1}(x),\qquad n\geq0,
\end{equation}
with the initial conditions $ \widetilde{w}_{-1}=0$ and $ \widetilde{w}_0=1$,
and with the coefficients
\begin{eqnarray*}
\widetilde A_n &=& \frac{ A_n}{(1-ab q^n)(1-acq^n)(1-adq^n)},
\\
B_n &=& a+1/a -A_n/a-aC_n,
\\
\widetilde C_n &=& C_n (1-ab q^{n-1})(1-acq^{n-1})(1-adq^{n-1}),
\end{eqnarray*}
where for future reference we denote
%
%e1.18 ###
%e1.17 ###
%
\begin{eqnarray}\label{A_n}
A_n &=& \frac{(1-abcd q^{n-1})(1-ab q^n)(1-ac q^n)(1-ad q^n)}{(1-abcd
q^{2n-1})(1-abcd q^{2n})},
\\
%
% \label{B_n}
% B_n=a+1/a -A_n/a-aC_n
%
\label{C_n}
C_n &=& \frac{ (1-q^n) (1-bc q^{n-1})(1-bd q^{n-1})(1-cd q^{n-1})}{
(1-abcd q^{2n-2})(1-abcd q^{2n-1})}.
\end{eqnarray}
Here we take $A_0=(1-ab)(1-ac)(1-ad)/(1-abcd)$ and $C_0=0$, also if $q=0$.
We remark that $B_n$ coincides with \cite{AskeyWilson85}, (1.27), so
it is symmetric in $a,b,c,d$ and that by taking the limit, $B_n$ is
also well defined for $a=0$.
Since
trivially
$\widetilde{A}_n$ and $\widetilde{C}_n$ are also symmetric in
$a,b,c,d$ it
follows
that polynomials $\{\widetilde w_n\}$ do not depend on the order of $a,b,c,d$.

% and $\widetilde A_n =\bar A_n/(1-ab q^n)$, $\widetilde B_n=B_n$,
% $\widetilde C_n=\bar C_n (1-ab q^{n-1})$.

%They do not depend on the order of $a,b,c,d$.

Except for Section \ref{S:discrete}, our parameters satisfy a
condition stronger than (\ref{abcd0}):
%
%e1.19 ###
%
\begin{equation}\label{abacad}
abcd,qabcd,ab,qab,ac,qac,ad,qad\in\mathbb{C}\setminus[1,\infty).
\end{equation}
To avoid cumbersome scaling of martingale polynomials later on, when
(\ref{abacad}) holds it is convenient to renormalize the polynomials
$\widetilde w_n$.
Therefore we
introduce the following family of polynomials:
%
%e1.20 ###
%
\begin{equation}
\label{AW}
%2x
2x\bar{w}_n(x)= \bar{A}_n\bar{w}_{n+1}(x)+ B_n\bar{w}_n(x)+
\bar{C}_n \bar{w}_{n-1}(x),\qquad n\geq 0,
\end{equation}
where $\bar A_n=(1-ab q^n) \widetilde A_n$,
$\bar C_n=\widetilde C_n/(1-ab q^{n-1})$.
%q^{n-1}).
The
initial conditions are again $\bar{w}_{-1}=0$ and $\bar{w}_0=1$.
When we want to indicate the parameters, we will write $\bar{w}_n(x;a,b,c,d)$.

For each $n$, polynomial $ \bar w_n $ differs only by a multiplicative
constant from $\widetilde w_n$ [see (\ref{AW2AW})]
so both families have the same orthogonality measure when it
exists. For this reason, both families of polynomials are referred to
as Askey--Wilson polynomials.

Recall that the polynomials $\{r_n(x;t)\dvtx n\in\mathbb{Z}_+, t\in I\}$ are
orthogonal martingale
polynomials for the process $(Z_t)_{t\in I}$ if:
\begin{longlist}
\item ${\mathbb E}(r_n(Z_t;t)r_m(Z_t;t) )=0$ for $m\ne n$ and
$t\in I$,
\item ${\mathbb E}(r_n(Z_t;t)|\mathcal{F}_s )=r_n(Z_s;s)$ for
$s<t$ in
$I$ and all $n=0,1,2\ldots.$
\end{longlist}

The following result shows that Askey--Wilson polynomials define
orthogonal martingale polynomials for a family of Markov processes.
\begin{theorem}\label{T3} Suppose that $A,B,C,D$ satisfy the
assumptions of Theorem~\ref{Thm-QH-q}. % \fbox{or Proposition \ref{T2}}.
Let
%
%e1.21 ###
%
\begin{equation}\label{Imax}\quad
I=I(A,B,C,D,q)= \biggl(\max\{0, CD, qCD\}, \frac{1}{\max\{0,AB,qAB\}} \biggr)
\end{equation}
with the convention $1/0=\infty$. (The last terms under the maxima can
contribute only when $q<0$ and $CD$ or $AB$ are negative.)
Let %\comment{revised!!}
%
%e1.22 ###
%
\begin{equation}\label{r_n}
r_n(x;t)=
t^{n/2}\bar w_n \biggl(\frac{\sqrt{1-q}}{2\sqrt{t}} x;A\sqrt{t},B\sqrt
{t},C/\sqrt{t},D/\sqrt{t} \biggr) .
\end{equation}
Then
\[
\{ r_n(x;t)\dvtx n=0,1,2\ldots, t\in I\}
\]
are orthogonal martingale polynomials for a Markov process $(Z_t)$
which satisfies (\ref{EQ:LR}) and (\ref{EQ:q-Var}) with $\eta
=\theta=\sigma=\tau=0$ and $\gamma=q$.
\end{theorem}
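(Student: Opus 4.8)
The plan is to construct the process $(Z_t)_{t\in I}$ directly from the Askey--Wilson orthogonality measures and then verify the two martingale-polynomial properties by hand. For each fixed $t\in I$, the parameters $A\sqrt t, B\sqrt t, C/\sqrt t, D/\sqrt t$ satisfy the standard Askey--Wilson constraints \eqref{abacad} — this is where the hypothesis \eqref{ABCD} and the definition \eqref{Imax} of $I$ are used, since the conditions $AC,AD,BC,BD,qAC,\dots\notin[1,\infty)$ are scale-invariant under $t$, while the requirement that the pairwise products $A\sqrt t\cdot B\sqrt t = ABt$ and $C/\sqrt t\cdot D/\sqrt t = CD/t$ avoid $[1,\infty)$ is exactly what forces $t$ into $I$. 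So I would first record the one-dimensional marginal: let $\pi_t$ be the (normalized) orthogonality measure of $\bar w_n(\cdot\,;A\sqrt t,B\sqrt t,C/\sqrt t,D/\sqrt t)$, transported through the linear map $x\mapsto \tfrac{2\sqrt t}{\sqrt{1-q}}x$, so that $r_n(\cdot\,;t)$ are orthogonal in $L^2(\pi_t)$; this gives property (i) for free, modulo checking that the measure is a probability measure with infinitely many points in its support and all moments finite, which is classical for Askey--Wilson weights in this parameter range.

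The substance is property (ii), the martingale identity $\E(r_n(Z_t;t)\mid\mathcal F_s)=r_n(Z_s;s)$, which I would obtain by defining the transition probabilities $P_{s,t}(x,dy)$ through a \emph{connection/projection formula} for Askey--Wilson polynomials. Concretely, I expect that $\bar w_n$ with the ``$t$-parameters'' expands in $\bar w_k$ with the ``$s$-parameters'' with coefficients that are themselves (up to normalization) an Askey--Wilson-type kernel in $(x,y)$; the projection formula of Nassrallah--Rahman \cite{Nassrallah-Rahman85} is exactly such a statement for a restricted parameter range, and the paper has already announced (in the introduction and in the discussion preceding Theorem \ref{T3}) that the martingale property of Askey--Wilson polynomials ``extends the projection formula \ldots to a large range of parameters.'' So the strategy is: (a) write down the candidate kernel $P_{s,t}(x,dy)$ as the appropriate Askey--Wilson measure in $y$ depending on the point $x$; (b) verify it is a genuine Markov kernel (nonnegativity and total mass one) on the relevant range — this is the delicate analytic point; (c) check the Chapman--Kolmogorov consistency $P_{s,u}=P_{s,t}P_{t,u}$, which will reduce to an identity among Askey--Wilson connection coefficients; and (d) read off property (ii) as the statement that $r_n(\cdot\,;t)$ integrated against $P_{s,t}(x,\cdot)$ returns $r_n(x;s)$, i.e. that $r_n$ are right-eigenfunctions of the transition operator with eigenvalue $1$. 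Kolmogorov's extension theorem then produces the process $(Z_t)_{t\in I}$.

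Finally, the conditional-moment claims are a computation with the three-step recurrence \eqref{AW} (or its monic form \eqref{AW-norm} when $A=0$). Since $r_0\equiv1$ and $r_1(x;t)$ is an affine function of $x$ whose coefficients I can extract from \eqref{AW}, the martingale property of $r_1$ gives the linear regression in one direction; combining the forward martingale property with the analogous backward (time-reversed) structure — guaranteed because the Askey--Wilson measure construction is symmetric under the $t\leftrightarrow 1/t$-type reflection noted in the introduction — yields the two-sided linear regression \eqref{EQ: LR}. For the quadratic conditional variance, the martingale property of $r_2$ expresses $\E(r_2(Z_t;t)\mid\mathcal F_{s,u})$ in terms of $r_2(Z_s;s)$ and $r_2(Z_u;u)$; since $r_2(x;t)$ is a quadratic in $x$ with explicitly computable (in $t$) coefficients coming from $A_0,A_1,C_1,C_2$ in \eqref{A_n}--\eqref{C_n}, solving for $\V[X_t\mid\mathcal F_{s,u}]$ produces \eqref{EQ: q-Var}, and a direct check of the resulting coefficients against \eqref{eta}--\eqref{gamma} specialized to the $I$-process shows $\eta=\theta=\sigma=\tau=0$, $\gamma=q$. (The normalization $\tfrac{\sqrt{1-q}}{2\sqrt t}$ in \eqref{r_n} is chosen precisely to make \eqref{EQ: cov} hold, i.e. $\E Z_t=0$, $\E(Z_sZ_t)=\min\{s,t\}$, which one verifies from the leading coefficients of $r_1$.) The main obstacle, as flagged, is step (b): establishing that the Askey--Wilson kernel is a bona fide probability transition kernel — in particular positivity of the weight and correct normalization — across the \emph{full} parameter range of Theorem \ref{T3}, beyond the restrictions under which \cite{Nassrallah-Rahman85} proved the projection formula; I would expect to handle the borderline and complex-conjugate-pair cases by analytic continuation in the parameters from the region covered by \cite{Nassrallah-Rahman85}, using that both sides of the relevant identities are rational (or meromorphic) in $A,B,C,D,q$ and that the measures vary weakly-continuously.
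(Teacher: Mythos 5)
Your overall architecture is the same as the paper's (Section \ref{S: general}): take the marginal $\pi_t$ to be the Askey--Wilson orthogonality measure with parameters $A\sqrt t, B\sqrt t, C/\sqrt t, D/\sqrt t$ pushed through $x\mapsto \tfrac{2\sqrt t}{\sqrt{1-q}}x$, define the transition kernel $P_{s,t}(x,dy)$ as the Askey--Wilson law with parameters $A\sqrt t, B\sqrt t, \sqrt{s/t}\,(x\pm\sqrt{x^2-1})$, prove the projection formula $\int p_n(y;t)\,P_{s,t}(x,dy)=p_n(x;s)$ via the Askey--Wilson connection coefficients, and get Chapman--Kolmogorov by the method of moments. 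However, there are two concrete gaps in how you propose to execute the two hardest steps.

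First, positivity of $P_{s,t}(x,\cdot)$ when $x$ lies in the atomic part of the support of $\pi_s$ cannot be obtained by ``analytic continuation \ldots using that both sides \ldots are meromorphic'' together with weak continuity. Analytic continuation propagates \emph{identities} (Chapman--Kolmogorov, the projection formula, total mass one), but not the \emph{inequality} that the measure is nonnegative; and weak limits from the Nassrallah--Rahman region cannot reach the atomic regime, because for nearby parameter values the orthogonality measure can simply fail to exist (Lemma \ref{L-AW-criter}(iv)). The paper instead invokes Favard's theorem and checks $\prod_k A_{k-1}C_k\ge 0$ for the recurrence coefficients of $P_{s,t}(x,\cdot)$, case by case over the possible atoms $x=x_j(s),y_j(s),u_j(s),v_j(s)$; this produces genuinely arithmetic conditions (e.g.\ for $q<0$ the offending product must equal $1/q^N$ with $N$ \emph{even}) that no continuity argument yields. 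Second, the passage from the one-sided martingale property to the two-sided moments \eqref{EQ: LR} and \eqref{EQ: q-Var} is underspecified: $\E(r_2(Z_t;t)\mid\mathcal F_s)=r_2(Z_s;s)$ says nothing directly about $\E(\,\cdot\mid\mathcal F_{s,u})$, and ``combining with the backward structure'' is not yet an argument. The paper's mechanism is to test the claimed conditional moments against the \emph{entire} family $\{r_n(Z_u;u)\}_n$ (dense by compact support), use the recurrence to absorb factors of $Z_u$, and reduce the claim to the operator identities $\mathbf{J}_t=t\mathbf{x}+\mathbf{y}$ and $[\mathbf{x},\mathbf{y}]_q=\mathbf{I}$ for the Jacobi matrices --- which is also where $\gamma=q$ and $\eta=\theta=\sigma=\tau=0$ come from. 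If you want to pursue your time-reversal idea instead, you would still need an analogous completeness step to convert forward plus backward martingale properties into a statement about conditioning on $\mathcal F_{s,u}$.
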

%
%$B$ as the distinguished first parameter in $\bar w_n$. The
%corresponding polynomials $r_n$ differ by the factor $
%both sets of polynomials are orthogonal martingale polynomials for the
%same process $(Z_t)$. (This follows from \cite[Theorem
%12.4.1]{Ismail05}.)

%s2 ###
\section{The case of densities}\label{S:elementary}
In this section we give an explicit and elementary construction of a
quadratic harness
%from Theorem \ref{Thm-QH-q}
on a possibly restricted time interval and under additional
restrictions on parameters $A,B,C,D$. %Later, in Section
%construction does not use orthogonal polynomials.
%
\begin{proposition}\label{Thm-QH-small}
Fix parameters $-1<q<1$ and $A,B,C,D$ that are either real or $(A,B)$
or $(C,D)$ are complex conjugate pairs. Without loss of generality, we
assume that
$|A|\leq|B|$, $ |C|\leq|D|$; additionally, we assume that $|B D|<1$.
Then the interval
\[
J= \biggl(\frac{|D|^2-CD}{1-AB |D|^2},\frac{1-CD|B|^2}{|B|^2-AB} \biggr)
\]
has positive length and there exists a unique bounded Markov process
$(X_t)_{t\in J}$ with absolutely continuous finite-dimensional distributions
which satisfies the conclusion of Theorem \ref{Thm-QH-q}.
%
%with mean and covariance \eqref{EQ: cov} such that
% and \eqref{EQ: q-Var} holds with parameters
% \eta&=&-\frac{((A+B)(1+ABCD)-2AB(C+D)) \sqrt{1-q}}{\sqrt{(1-A C) (1-B
%C) (1-A D) (1-B D) (1- q A B C D)}}\label{eta}\\
% \theta&=&-\frac{((D+C)(1+ABCD) -2 C D(A +B))) \sqrt{1-q}}{\sqrt{(1-A
%C) (1-B C) (1-A D) (1-B D) (1-q A B C D)}}\label{theta}\\
% \frac{A B (1-q)}{1-q A B C D}\label{sigma}
% \\
% \\
% \gamma&=& \frac{q-A B C D}{1- q A B C D}\label{gamma}
% \end{eqnarray}
\end{proposition}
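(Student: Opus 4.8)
The plan is to construct the process directly from the orthogonality measures of Askey-Wilson polynomials, exploiting that under the extra hypotheses $|A|\le|B|$, $|C|\le|D|$, $|BD|<1$ all the relevant Askey-Wilson distributions are absolutely continuous with the classical Nassrallah--Rahman weight, so that no limiting or point-mass analysis is needed. First I would record that, with $a=A\sqrt t$, $b=B\sqrt t$, $c=C/\sqrt t$, $d=D/\sqrt t$, the pairwise products $ab=ABt$, $cd=CD/t$, $ac=AC$, $ad=AD$, $bc=BC$, $bd=BD$ satisfy $|ac|,|ad|,|bc|,|bd|\le|BD|<1$ for every $t$ in the stated interval $J$, and $ab<1$, $cd<1$ exactly because $t$ lies between the two endpoints; hence \eqref{abacad} holds and the Askey-Wilson polynomials $\bar w_n(x;a,b,c,d)$ are orthogonal with respect to an absolutely continuous probability measure $\nu_t$ on a bounded interval. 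This $\nu_t$ is the candidate one-dimensional distribution of $X_t$ after the affine rescaling in \eqref{r_n}.

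Next I would invoke Theorem \ref{T3}: since the present $A,B,C,D,q$ satisfy the hypotheses of Theorem \ref{Thm-QH-q} (the condition \eqref{ABCD} is implied here because every product in question has modulus $<1$, and $ABCD,qABCD<1$ likewise), there is a Markov process $(Z_t)_{t\in I}$ for which the $r_n(x;t)$ are orthogonal martingale polynomials and which satisfies \eqref{EQ: LR} and \eqref{EQ: q-Var} with $\eta=\theta=\sigma=\tau=0$, $\gamma=q$. Note $J\subseteq I$: the left endpoint of $J$ dominates $\max\{0,CD,qCD\}$ and the reciprocal of the right endpoint dominates $\max\{0,AB,qAB\}$, which is a short inequality check using $|A|\le|B|$, $|C|\le|D|$, $|BD|<1$. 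One then passes from $(Z_t)$ to $(X_t)$ by the deterministic space-time change indicated in \eqref{Def:X} — a rescaling $X_t = f(t)\,Z_{g(t)}$ with $g$ a Möbius change of time chosen so that the covariance becomes $\min\{s,t\}$ and the quadratic-variance parameters become the prescribed $(\eta,\theta,\sigma,\tau,\gamma)$ of \eqref{eta}--\eqref{gamma}; that the image interval is exactly the $J$ displayed in the Proposition is a computation with $g$. The martingale-polynomial structure is preserved under affine space-time changes, so $(X_t)$ inherits \eqref{EQ: LR} and the quadratic harness identity \eqref{EQ: q-Var}, and boundedness is inherited because each $\nu_t$ has bounded support with a bound locally uniform in $t$.

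For the absolute continuity of the finite-dimensional distributions I would check that the Askey-Wilson transition kernel $P_{s,t}(x,dy)$ — the connection-coefficient/linearization kernel underlying Theorem \ref{T3} — has a density with respect to $\nu_t(dy)$; this is where the extra moduli restrictions pay off, since in this regime the kernel is the genuine Nassrallah--Rahman integral kernel (a product of $q$-shifted factorials over a theta-type denominator) which is a bona fide nonnegative $L^1$ density, with no Dirac components appearing, and the chain $\nu_{t_1}(dx_1)P_{t_1,t_2}(x_1,dx_2)\cdots$ is then absolutely continuous on each $J^{n}$. Uniqueness within the class of bounded Markov processes with absolutely continuous finite-dimensional distributions satisfying the conclusion of Theorem \ref{Thm-QH-q} follows from the uniqueness clause of Theorem \ref{Thm-QH-q} itself: an absolutely continuous law has infinite support and all moments, so the hypotheses of that clause are met. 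I expect the main obstacle to be the bookkeeping in the space-time change — verifying that the Möbius map $g$ sends $I$ onto precisely the interval $J$ written in the Proposition and simultaneously produces the target parameters — together with confirming nonnegativity and integrability of the explicit Nassrallah--Rahman kernel uniformly for $(s,t)$ in compact subsets of $J$; both are calculations rather than conceptual difficulties, but they are the places where an error would hide.
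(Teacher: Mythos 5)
Your proposal is logically sound but it runs in the opposite direction from the paper. The paper's Section~\ref{S: elementary} proves Proposition~\ref{Thm-QH-small} \emph{first}, by a self-contained, elementary construction: it writes down the Askey--Wilson \emph{densities} \eqref{marg-d} and \eqref{trans-d} explicitly, verifies the Chapman--Kolmogorov equations by a direct computation with the integral identity of Proposition~\ref{techlem} (a Nassrallah--Rahman-type projection formula proved by manipulating \eqref{awint}), reads off all unconditional and conditional moments from the closed forms \eqref{1mom} and \eqref{awvar}, identifies the conditional law given past and future as an explicit Askey--Wilson density (Proposition~\ref{pfcond}), and only then applies the scaling \eqref{Z} and M\"obius time change \eqref{Def:X}. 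No orthogonal polynomials, martingale properties, or moment-problem arguments enter. You instead deduce the Proposition from Theorems~\ref{T3} and \ref{Thm-QH-q}, whose proofs occupy Section~\ref{S: general} and rest on the connection-coefficient/Favard machinery. This is not formally circular --- the proof of Theorem~\ref{Thm-QH-q} uses only formulas from Section~\ref{S: elementary}, not the statement of Proposition~\ref{Thm-QH-small} --- but it forfeits exactly what this Proposition is there to provide: an explicit construction independent of the heavy general theory, which then serves as the template (and supplies the transformation \eqref{Def:X} and the attendant calculations) for the general case. What your route buys is brevity, and your observations that the hypotheses here imply \eqref{ABCD}, that absolute continuity of marginals and transition kernels follows because all four Askey--Wilson parameters have modulus below one in this regime, and that uniqueness follows from the uniqueness clause of Theorem~\ref{Thm-QH-q}, are all correct.

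Two small points to tighten. First, you apply the substitution $a=A\sqrt t$, $b=B\sqrt t$, $c=C/\sqrt t$, $d=D/\sqrt t$ with $t$ ranging over $J$, but $J$ is the time domain of the \emph{final} process $X$; the Askey--Wilson parameters involve $T(t)\in I(A,B,C,D)=(|D|^2,1/|B|^2)$, the preimage of $J$ under the M\"obius map \eqref{hhhh}, and the modulus bounds $|A\sqrt{T(t)}|\le|B\sqrt{T(t)}|<1$ and $|C/\sqrt{T(t)}|\le|D/\sqrt{T(t)}|<1$ are what hold on that interval. Second, the claim that $J$ has positive length (part of the statement) should be recorded explicitly: it follows from $|D|^2<1/|B|^2$, i.e. $|BD|<1$, together with the monotonicity of $h$ when $ABCD<1$.
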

\begin{remark}
Proposition \ref{Thm-QH-small} is a special case of Theorem \ref
{Thm-QH-q}; the latter may allow us to extend the processes constructed
here to a wider time-interval,
\[
T_0\leq\frac{|D|^2-CD}{1-AB |D|^2} \quad\mbox{and}\quad \frac
{1-CD|B|^2}{|B|^2-AB}\leq T_1.
\]
The easiest way to see the inequalities is to compare the end points of
intervals %$(|D|^2,1/|B|^2)$ , $(|CD|,1/|AB|)$
(\ref{I(A,B,C,D)}) and (\ref{Imax})
after the M\"{o}bius transformation (\ref{hhhh}); for example,
$|D|^2\geq|CD|\geq\max\{0, CD, qCD\}$ where the last term plays a
role only when $q<0$ and $CD<0$.
\end{remark}

The rest of this section contains the construction, ending with the
proof of Proposition \ref{Thm-QH-small}.

%s2.1 ###
\subsection{Askey--Wilson densities}\label{Sect:AWdens}
For complex $a$ and $|q|<1$ we define
%$$(a)_k=(a;q)_k= \{\begin{array}{ll} 1, & k=0, \\
%and
%
\begin{eqnarray*}
(a)_{n}&=&(a;q)_{n}=\cases{
\displaystyle\prod_{j=0}^{n-1} (1-aq^j), &\quad $n=1,2,\ldots,$\cr
1, &\quad $n=0$,}
\\
(a)_{\infty}&=&(a;q)_{\infty}=\prod_{j=0}^{\infty} (1-aq^j),
\end{eqnarray*}
%
%denote $q$-Pochhammer symbols.
and we denote
\begin{eqnarray*}
(a_1,a_2,\ldots,a_l)_{\infty} &=& (a_1,a_2,\ldots,a_l;q)_{\infty
}=(a_1;q)_{\infty}(a_2;q)_{\infty}\cdots(a_l;q)_{\infty} ,
\\
(a_1,a_2,\ldots,a_l)_{n} &=& (a_1,a_2,\ldots
,a_l;q)_{n}=(a_1;q)_{n}(a_2;q)_{n}\cdots(a_l;q)_{n} .
\end{eqnarray*}
The advantage of this notation over the standard product notation lies
both in its conciseness and in
mnemonic simplification rules,
%
%e2.1 ###
%
\begin{eqnarray}
\frac{(a, b)_n}{(a,c)_n} &=& \frac{(b)_n}{(c)_n} ,
\nonumber\\
\label{poch1}
(\alpha)_{M+L} &=& (q^M\alpha)_L (\alpha)_M
\end{eqnarray}
and
%
%e2.2 ###
%
\begin{equation}\label{poch2}
(\alpha)_M=(-\alpha)^Mq^{{M(M-1)}/{2}} \biggl(\frac{q}{q^M\alpha}
\biggr)_M,
\end{equation}
%
%$$
%$$
%$$(aq;q)_n=(-a)^n q^{-n(n-1)/2}(1/a;q)_n $$
which often help with calculations. For a reader who is
as uncomfortable with this notation, as we were at the beginning of
this project, we suggest to re-write the formulas for the
case $q=0$. For example, $(a;0)_n$ is either $1$ or $1-a$ as
$n=0$ or $n>0$, respectively. The construction of Markov process for
$q=0$ in itself is quite interesting as the resulting laws are
related to the laws that arise in Voiculescu's free probability; the
formulas simplify enough so that the integrals can be computed by
elementary means, for example, by residua.

From Askey and Wilson \cite{AskeyWilson85}, Theorem 2.1,
it follows that if $a,b,c,d$ are complex such that $\max\{
|a|,|b|,|c|,|d|\}<1$ and $-1<q<1$,
%$$a^2,ab, ac, ad, b^2,bc,bd,c^2,cd,d^2\not\in\{1,1/q,1/q^2,\ldots\},$$
then with $\theta=\theta_x$ such that
$\cos\theta=x$,
%
%e2.3 ###
%
\begin{eqnarray}\label{awint}\quad
&&\int_{-1}^1 \frac{1}{\sqrt{1-x^2}}\frac{ (e^{2i\theta},
e^{-2i\theta} )_{\infty}}
{ (ae^{i\theta},ae^{-i\theta}, be^{i\theta}, be^{-i\theta},
ce^{i\theta}, ce^{-i\theta}, de^{i\theta}, de^{-i\theta} )_{\infty
}} \,dx\nonumber\\[-8pt]\\[-8pt]
&&\qquad=\frac{2\pi(abcd)_{\infty}}{(q, ab, ac, ad, bc, bd, cd)_{\infty}}
.\nonumber
\end{eqnarray}
When $-1<q<1$ and $a,b,c,d$ are either real
or come in complex conjugate pairs and
$\max\{|a|,|b|,|c|,|d|\}<1$, the integrand is real and positive.
This allows us to define the Askey--Wilson density,
%
%e2.4 ###
%
\begin{equation}\label{AWdistr}\qquad
f(x;a,b,c,d)=\frac{K(a,b,c,d)}{\sqrt{1-x^2}} \biggl|\frac{(e^{2i\theta
})_{\infty}}{(ae^{i\theta}, be^{i\theta}, ce^{i\theta},
de^{i\theta})_{\infty}} \biggr|^2I_{(-1,1)}(x),
\end{equation}
where
%
%e2.5 ###
%
\begin{equation}\label{K}
K(a,b,c,d)=\frac{(q, ab, ac, ad, bc, bd, cd)_{\infty}}{2\pi
(abcd)_{\infty}} .
\end{equation}
The first two moments are easily computed.
\begin{proposition}\label{awmom}
Suppose $X$ has the Askey--Wilson density $f(x;a,b,c,d)$ with
parameters $a,b,c,d$ as above. Then the expectation of $X$ is
%
%e2.6 ###
%
\begin{equation}\label{1mom}
{\mathbb E}(X)=\frac{a+b+c+d-abc-abd-acd-bcd}{2(1-abcd)}
\end{equation}
and the variance of $X$ is
%
%e2.7 ###
%
\begin{equation}\label{awvar}\qquad
\operatorname{Var}(X)=\frac
{(1-ab)(1-ac)(1-ad)(1-bc)(1-bd)(1-cd)(1-q)}{4(1-abcd)^2(1-abcdq)} .
\end{equation}
\end{proposition}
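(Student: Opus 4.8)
The plan is to read off both moments from the three-term recurrence coefficients of the Askey--Wilson polynomials already recorded in \eqref{AW-norm}. First I would invoke the orthogonality relations of Askey and Wilson \cite[Theorem 2.1]{Askey-Wilson-85} underlying \eqref{awint}: under the hypotheses of the proposition, $\{\bar w_n(\cdot;a,b,c,d)\}$ is the orthogonal polynomial system for the probability density $f(x;a,b,c,d)$ on $(-1,1)$, and the bound $\max\{|a|,|b|,|c|,|d|\}<1$ guarantees this measure is purely absolutely continuous (no mass points), so we may pass to the monic polynomials $w_n$ of \eqref{AW-norm}. Next I would use the elementary fact that, for any monic orthogonal polynomial system $xw_n=w_{n+1}+b_nw_n+\lambda_nw_{n-1}$ relative to a probability measure with finite second moment, $b_0=\E(X)$ and $\lambda_1=\E(X^2)-(\E(X))^2=\V(X)$ (pair the $n=0$ and $n=1$ identities with $w_0\equiv1$). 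Comparing with \eqref{AW-norm} then gives $b_0=\tfrac12(a+a^{-1}-A_0-C_0)$ and $\lambda_1=\tfrac14A_0C_1$.

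It remains to substitute. Since $C_0=0$ and, from \eqref{A_n} and \eqref{C_n} (after the factor $1-abcdq^{-1}$ cancels out of $A_0$),
\[
A_0=\frac{(1-ab)(1-ac)(1-ad)}{a(1-abcd)},\qquad C_1=\frac{a(1-q)(1-bc)(1-bd)(1-cd)}{(1-abcd)(1-abcdq)},
\]
plugging $A_0$ into $b_0$, clearing denominators and cancelling a factor $a$ in the numerator yields \eqref{1mom}; this short numerator identity is the only computation of note. For the variance the factor $a$ in $C_1$ cancels the $a^{-1}$ in $A_0$, so $\tfrac14A_0C_1$ is literally the right-hand side of \eqref{awvar}, with nothing left to simplify.

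A self-contained alternative that avoids citing orthogonality rests on a ``parameter shift'': peeling the leading factor off each $q$-Pochhammer symbol gives $(ae^{\pm i\theta})_\infty=(1-ae^{\pm i\theta})(aqe^{\pm i\theta})_\infty$, so with $x=\cos\theta$ (whence $(1-ae^{i\theta})(1-ae^{-i\theta})=1+a^2-2ax$) one obtains the weight identity
\be\label{shift}
(1+a^2-2ax)\,f(x;a,b,c,d)=\frac{K(a,b,c,d)}{K(aq,b,c,d)}\,f(x;aq,b,c,d)\qquad\text{on }(-1,1)
\ee
and, iterating, the analogue with $1+a^2-2ax$ replaced by $(1+a^2-2ax)(1+a^2q^2-2aqx)$ and $K(aq,b,c,d)$ by $K(aq^2,b,c,d)$. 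Integrating \eqref{shift} gives $1+a^2-2a\,\E(X)=K(a,b,c,d)/K(aq,b,c,d)$, whose right side telescopes through $(\alpha)_\infty/(\alpha q)_\infty=1-\alpha$ to $(1-ab)(1-ac)(1-ad)/(1-abcd)$, hence to \eqref{1mom}; integrating the iterated identity gives an affine relation among $1,\E(X),\E(X^2)$ with right side $(1-ab)(1-abq)(1-ac)(1-acq)(1-ad)(1-adq)/\bigl((1-abcd)(1-abcdq)\bigr)$, from which $\E(X^2)$, then $\V(X)$, follow. Here one should be careful when $(a,b)$ is a conjugate pair (use the unmodulused Askey--Wilson weight, to which \eqref{awint} still applies, and pass to the real moments of $X$ only at the end) and when $a=0$ (relabel a nonzero parameter first, harmless since $f$ and both target formulas are symmetric in $a,b,c,d$; the all-zero case is a limit).

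I do not expect a genuine obstacle. The only points needing care are: that, under the stated bounds, $f$ is the \emph{whole} orthogonality measure and not merely its continuous part; the harmless cancellations of $1-abcdq^{-1}$ in $A_0$ and of $a$ in $\tfrac14A_0C_1$; and, in the parameter-shift variant only, the somewhat tedious rational-function algebra for the variance, which is precisely the step the recurrence-coefficient computation makes trivial.
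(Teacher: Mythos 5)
Your proposal is correct, and both of your routes work. Your primary argument (read $\E(X)=b_0$ and $\V(X)=\lambda_1$ off the monic recurrence \eqref{AW-norm}, then substitute $A_0$ and $C_1$) is not the route the paper takes here: the paper's proof is essentially your ``self-contained alternative''. It multiplies the weight by $1+a^2-2ax$ to shift $a\mapsto qa$, obtains $\E(1+a^2-2aX)=K(a,b,c,d)/K(qa,b,c,d)=(1-ab)(1-ac)(1-ad)/(1-abcd)$ from \eqref{awint}, and for the variance computes $4ab\,\V(X)=\E[(1+a^2-2aX)(1+b^2-2bX)]-\E(1+a^2-2aX)\E(1+b^2-2bX)$ as a difference of two $K$-ratios (shifting two distinct parameters rather than iterating the shift on $a$ as you suggest — a minor variant that avoids the $a^{-1}$ bookkeeping); it then disposes of the degenerate cases ($q=0$ via Marchenko--Pastur, all parameters zero via $q$-Hermite) much as you do. The trade-off is as you describe: the weight-shift computation uses only the normalization integral \eqref{awint} and no orthogonality, at the cost of some rational-function algebra, whereas your recurrence-coefficient argument makes the variance formula literally equal to $\tfrac14A_0C_1$ with no simplification, but needs the full orthogonality statement of \cite[Theorem~2.1]{Askey-Wilson-85} (specifically that $\bar w_1,\bar w_2$ integrate to zero against $f$, and that $f$ carries no discrete part under $\max\{|a|,|b|,|c|,|d|\}<1$). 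The authors themselves endorse your primary route later in the paper, where they note that for general (possibly atomic) Askey--Wilson laws the same two formulas follow from the fact that the first two Askey--Wilson polynomials integrate to zero — so your approach has the added benefit of extending verbatim beyond the absolutely continuous case treated in this proposition.
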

%
%We remark that since we could have used orthogonal polynomials to
%compute the moments,
% the same answer holds for the more general Askey-Wilson distribution
%with discrete component.
%
\begin{pf}
If $a=b=c=d=0$, ${\mathbb E}(X)=0$ by symmetry. If one of the
parameters, say
$a\in{\mathbb C}$, is nonzero, we note that
\begin{eqnarray*}
(ae^{i\theta},
a e^{-i\theta} )_{\infty} &=& (ae^{i\theta},
a e^{-i\theta} )_{1} (aqe^{i\theta},aqe^{-i\theta} )_{\infty} \\
&=& (1+a^2-2ax) (aqe^{i\theta},aqe^{-i\theta} )_{\infty} .
\end{eqnarray*}
Therefore, by (\ref{awint}),
\[
{\mathbb E}(1+a^2-2aX)=\frac{K(a,b,c,d)}{K(qa,b,c,d)}=\frac
{(1-ab)(1-ac)(1-ad)}{1-abcd} .
\]
Now (\ref{1mom}) follows by a simple algebra.

Similarly, for nonzero $a,b\in{\mathbb C}$,
\begin{eqnarray*}
&&\hspace*{-31.25pt}4ab \operatorname{Var}
(X)={\mathbb E}[(1+a^2-2aX)(1+b^2-2bX) ],
\\
&&-{\mathbb E}(1+a^2-2aX)
{\mathbb E}(1+b^2-2bX)\\
&&\qquad=\frac{K(a,b,c,d)}{K(qa,qb,c,d)}
-\frac{K^2(a,b,c,d)}{K(qa,b,c,d)K(a,qb,c,d)}
\\
&&\qquad=\frac{(1-ab)(1-qab)(1-ac)(1-ad)(1-bc)(1-bd)}{(1-abcd)(1-qabcd)}\\
&&\qquad\quad{}-\frac{(1-ab)^2(1-ac)(1-ad)(1-bc)(1-bd)}{(1-abcd)^2}.
\end{eqnarray*}
Again after simple transformations we arrive at (\ref{awvar}).

If only one parameter is nonzero but $q\ne0$, the calculations are
similar, starting with
${\mathbb E}((1+a^2-2aX)(1+a^2q^2-2aqX) )$; when $q=0$ the density
is a re-parametrization of Marchenko--Pastur law
\cite{HiaiPetz00}, (3.3.2); we omit the details. If $a,b,c,d$ are
zero, $f(x;0,0,0,0)$ is the
orthogonality measure of the continuous $q$-Hermite polynomials
\cite{KoekoekSwarttouw}, (3.26.3); since $H_2(x)=2 x
H_1(x)-(1-q)H_0=4x^2-(1-q)$, the second moment is
$(1-q)/4$.
\end{pf}

We need a
technical result on Askey--Wilson densities inspired by \cite
{NassrallahRahman85},
formula~(2.4).
\begin{proposition}\label{techlem}
Let $a,b,c,d,q$ be as above with the additional assumption that
the only admissible conjugate pairs are $a=\bar{b}$ or
$c=\bar{d}$, and $m$ is real such that $|m|<1$. Then with $x=\cos
\theta_x$,
%
%e2.8 ###
%
\begin{eqnarray}\label{inteq}
&&\int_{-1}^1 f (x;am,bm,c,d )f (y;a,b,me^{i\theta_x},me^{-i\theta_x})
\,dx \nonumber\\[-8pt]\\[-8pt]
&&\qquad= f (y;a,b,cm,dm ).\nonumber%\hspace*{-32pt}
\end{eqnarray}
\end{proposition}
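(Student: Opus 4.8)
The plan is to establish \eqref{inteq} by a direct computation of the classical ``linearization'' type: substitute the explicit density formula \eqref{AWdistr} for all three Askey--Wilson densities, carry out the integration in $x$ by means of the Askey--Wilson integral \eqref{awint}, and check that the surviving $q$-Pochhammer symbols reassemble precisely into $f(y;a,b,cm,dm)$. Throughout write $x=\cos\theta_x$, $y=\cos\theta_y$; only $x,y\in(-1,1)$ matter, since off $(-1,1)$ both sides of \eqref{inteq} vanish because of the indicator in \eqref{AWdistr}. One first checks that under the standing hypotheses ($m\in\real$, $|m|<1$, and the only admissible conjugate pairs among $a,b,c,d$ being $a=\bar b$ and/or $c=\bar d$) each of the parameter quadruples $(am,bm,c,d)$, $(a,b,me^{i\theta_x},me^{-i\theta_x})$ and $(a,b,cm,dm)$ again satisfies the assumptions making \eqref{AWdistr} a genuine probability density (e.g.\ $me^{i\theta_x}=\overline{me^{-i\theta_x}}$ is automatically a conjugate pair), so the statement is meaningful.

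The key observation is a cancellation. Expanding the squared modulus in \eqref{AWdistr}, the density $f(x;am,bm,c,d)$ carries in its denominator the factor $(ame^{i\theta_x},ame^{-i\theta_x},bme^{i\theta_x},bme^{-i\theta_x})_\infty$, while the second density $f(y;a,b,me^{i\theta_x},me^{-i\theta_x})$ carries exactly that same factor in the \emph{numerator} of its normalizing constant $K(a,b,me^{i\theta_x},me^{-i\theta_x})$ (apply \eqref{K} and use $me^{i\theta_x}\cdot me^{-i\theta_x}=m^2$). After this cancellation the entire $x$-dependence left under the integral sign is
\[
\frac{1}{\sqrt{1-x^2}}\,\frac{(e^{2i\theta_x},e^{-2i\theta_x})_\infty}{(ce^{i\theta_x},ce^{-i\theta_x},de^{i\theta_x},de^{-i\theta_x},me^{i\theta_y}e^{i\theta_x},me^{i\theta_y}e^{-i\theta_x},me^{-i\theta_y}e^{i\theta_x},me^{-i\theta_y}e^{-i\theta_x})_\infty},
\]
which is precisely the integrand of \eqref{awint} for the four parameters $c,d,me^{i\theta_y},me^{-i\theta_y}$. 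Since $|c|,|d|<1$ and $|me^{\pm i\theta_y}|=|m|<1$, \eqref{awint} applies and the $x$-integral equals $2\pi(cdm^2)_\infty/(q,cd,cme^{i\theta_y},cme^{-i\theta_y},dme^{i\theta_y},dme^{-i\theta_y},m^2)_\infty$.

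It then remains to multiply this value by the $x$-free leftovers: the constant $K(am,bm,c,d)$, the $x$-free remnant $(q,ab,m^2)_\infty/(2\pi(abm^2)_\infty)$ of $K(a,b,me^{i\theta_x},me^{-i\theta_x})$, and the $y$-weight $(e^{2i\theta_y},e^{-2i\theta_y})_\infty/(\sqrt{1-y^2}\,(ae^{i\theta_y},ae^{-i\theta_y},be^{i\theta_y},be^{-i\theta_y})_\infty)$. Splitting off the factors $cme^{\pm i\theta_y},dme^{\pm i\theta_y}$ from the Askey--Wilson denominator (these recombine with the $y$-weight into $|(cme^{i\theta_y},dme^{i\theta_y})_\infty|^{-2}$) and applying the reduction rule $(\alpha,\beta)_\infty/(\alpha,\gamma)_\infty=(\beta)_\infty/(\gamma)_\infty$ a handful of times, the product of the remaining constants collapses to $(q,ab,acm,adm,bcm,bdm,cdm^2)_\infty/(2\pi(abcdm^2)_\infty)$, which by \eqref{K} is exactly $K(a,b,cm,dm)$. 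Attaching the full $y$-weight reproduces $f(y;a,b,cm,dm)$, which is \eqref{inteq}.

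No degenerate case requires separate treatment: \eqref{awint} is valid for arbitrary complex parameters of modulus less than $1$, so the cases in which some of $a,b,c,d$ (or $q$) vanish are automatically covered. The only genuine labor is the $q$-Pochhammer bookkeeping in the last step, and that is where I expect the main --- though purely mechanical --- difficulty to lie; the one structural input is the single evaluation \eqref{awint}, exactly as in the mechanism behind \cite[(2.4)]{Nassrallah-Rahman85}.
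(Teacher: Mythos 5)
Your proposal is correct and follows essentially the same route as the paper's own proof: expand the normalizing constants, exploit the cancellation of $(ame^{i\theta_x},ame^{-i\theta_x},bme^{i\theta_x},bme^{-i\theta_x})_\infty$ between the first density's denominator and the second density's constant $K(a,b,me^{i\theta_x},me^{-i\theta_x})$, evaluate the resulting $x$-integral via \eqref{awint} with parameters $c,d,me^{i\theta_y},me^{-i\theta_y}$, and reassemble the leftover Pochhammer symbols into $K(a,b,cm,dm)$ times the $y$-weight. The only remaining work is the mechanical bookkeeping you already identify, which the paper carries out in the same way.
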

\begin{pf}
We compute the left-hand side of (\ref{inteq}) expanding the constants
$K(am,bm,c,d)$ and $K(a,b,me^{i\theta_x},me^{-i\theta_x})$ to
better show how some factors cancel out. To avoid case-by-case
reasoning when complex conjugate pairs are present, we also expand
parts of the density
without the use of modulus as in (\ref{awint}).

The integrand on the left-hand side of (\ref{inteq}) is
\begin{eqnarray*}
&&\frac{(q, abm^2, acm, adm, bcm, bdm, cd)_{\infty} | (e^{2i\theta_x}
)_{\infty}|^2}
{2\pi(abcdm^2)_{\infty} (ame^{i\theta_x}, ame^{-i\theta_x},
bme^{i\theta_x}, bme^{-i\theta_x} )_\infty| (ce^{i\theta_x},
de^{i\theta_x} )_{\infty}|^2}\\
&&\qquad{}\times\frac{ (q, ab, ame^{i\theta_x}, ame^{-i\theta_x},
bme^{i\theta_x}, bme^{-i\theta_x}, m^2 )_{\infty}}
{2\pi(abm^2)_{\infty}\sqrt{1-y^2}}
\\
&&\qquad{}\times\frac{| (e^{2i\theta_y} )_{\infty}|^2}{| (ae^{i\theta_y},
be^{i\theta_y},
me^{i(\theta_x+\theta_y)}, me^{i(-\theta_x+\theta_y)} )_{\infty
}|^2\sqrt{1-x^2}} .
\end{eqnarray*}
Rearranging the terms we rewrite the left-hand side of (\ref{inteq}) as
\begin{eqnarray*}
&&\frac{(q, abm^2, acm, adm, bcm, bdm, cd, q, ab, m^2)_{\infty}}{(2\pi
)^2(abm^2, abcdm^2)_{\infty}\sqrt{1-y^2}}
\\
&&\qquad{}
\times\frac{| (e^{2i\theta_y} )_{\infty}|^2}{| (ae^{i\theta_y},
be^{i\theta_y} )_{\infty}|^2}
\\
&&\qquad{}\times\int_{-1}^1 \frac{| (e^{2i\theta_x} )_{\infty}|^2}
{| (me^{i\theta_y}e^{i\theta_x}, me^{-i\theta_y}e^{i\theta_x},
ce^{i\theta_x}, de^{i\theta_x} )_{\infty}|^2}
\frac{dx}{\sqrt{1-x^2}} .
\end{eqnarray*}
Now we apply formula (\ref{awint}) to this integral, so the left-hand
side of (\ref{inteq}) becomes
\begin{eqnarray*}
&&\frac{(q, abm^2, acm, adm, bcm, bdm, cd, q, ab, m^2)_{\infty}}{(2\pi
)^2(abm^2, abcdm^2)_{\infty}\sqrt{1-y^2}}
\times\frac{| (e^{2i\theta_y} )_{\infty}|^2}{| (ae^{i\theta_y},
be^{i\theta_y} )_{\infty}|^2}
\\
&&\quad{}\times\frac{2\pi(cdm^2)_{\infty}}{(q, m^2, mce^{i\theta_y},
mde^{i\theta_y}, mce^{-i\theta_y}
, mde^{-i\theta_y}, cd)_{\infty}}
\\
&&
\qquad
=\frac{(q, ab, acm, adm, bcm, bdm, cdm^2)_{\infty} | (e^{2i\theta_y}
)_{\infty} |^2}
{2\pi(abcdm^2)_{\infty}| (ae^{i\theta_y}, be^{i\theta_y}
, mce^{i\theta_y}, mde^{i\theta_y} )_{\infty}|^2\sqrt{1-y^2}},
\end{eqnarray*}
which completes the proof.
\end{pf}

%s2.2 ###
\subsection{Markov processes with Askey--Wilson densities}

We now fix $A$, $B$, $C$, $D$ as in Proposition \ref{Thm-QH-small}.
The interval
%
%e2.9 ###
%
\begin{equation}
\label{I(A,B,C,D)}
I(A,B,C,D)= \biggl(|D|^2, \frac{1}{|B|^2} \biggr)
\end{equation}
is nonempty (here $1/0=\infty$).
For any $t\in I(A,B,C,D)$ and $y\in[-1,1]$ let
%
%e2.10 ###
%
\begin{equation}\label{marg-d}
p(t,y)=f \biggl(y;A\sqrt{t},B\sqrt{t},\frac{C}{\sqrt{t}},\frac{D}{\sqrt
{t}} \biggr)
\end{equation}
and for any $s<t$ in $I(A,B,C,D)$ and $x,y\in[-1,1]$ let
%p(s,x;t,y)=f (y;A\sqrt{t},B\sqrt{t},\sqrt{\frac{s}{t}}e^{i\theta_x},
% x=\cos\theta_x
%
%e2.11 ###
%
\begin{equation}\label{trans-d}\qquad
p(s,x;t,y)=f \biggl(y;A\sqrt{t},B\sqrt{t},\frac{\sqrt{s}}{\sqrt
{t}}e^{i\theta_x},\frac{\sqrt{s}}{\sqrt{t}}e^{-i\theta_x}
\biggr),\qquad
x=\cos\theta_x .
\end{equation}
\begin{proposition}\label{mf}
The family of probability
densities $(p(s,x;t,y), p(t,y))$ defines a Markov process $(Y_t)_{t\in
I}$ on the state space $[-1,1]$.
That is, for any $s<t$ from $I(A,B,C,D)$ and $y\in[-1,1]$,
%
%e2.12 ###
%
\begin{equation}\label{marpro1}
p(t,y)=\int_{-1}^1 p(s,x;t,y)p(s,x) \,dx
\end{equation}
and for any $s<t<u$ from $I(A,B,C,D)$ and $x,z\in[-1,1]$,
%
%e2.13 ###
%
\begin{equation}\label{marpro2}
p(s,x;u,z)=\int_{-1}^1 p(t,y;u,z)p(s,x;t,y)\, dy .
\end{equation}
\end{proposition}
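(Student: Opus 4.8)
The plan is to derive the two Chapman--Kolmogorov-type identities \eqref{marpro1} and \eqref{marpro2} as special cases of the integral identity \eqref{inteq} of Proposition \ref{techlem}, by choosing the parameters $a,b,c,d,m$ appropriately. Once these two identities are in hand, the Markov process $(Y_t)_{t\in I}$ is produced by the Kolmogorov extension theorem, since each marginal $p(t,\cdot)$ from \eqref{marg-d} and each transition kernel $p(s,x;t,\cdot)$ from \eqref{trans-d} is a genuine probability density on $[-1,1]$: both are Askey--Wilson densities of the form \eqref{AWdistr}, hence nonnegative, and of total mass one by \eqref{awint}.

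First I would dispose of the admissibility bookkeeping used repeatedly below. For $s<t<u$ in $I(A,B,C,D)=(|D|^2,1/|B|^2)$ the numbers $|A|\sqrt r$, $|B|\sqrt r$, $|C|/\sqrt r$, $|D|/\sqrt r$ with $r\in I(A,B,C,D)$ all lie in $[0,1)$, because $r<1/|B|^2\le 1/|A|^2$ (using $|A|\le|B|$) and $r>|D|^2\ge|C|^2$ (using $|C|\le|D|$); and the ratios $\sqrt{s/t}$, $\sqrt{t/u}$, $\sqrt{s/u}$ lie in $(0,1)$. Moreover $(A\sqrt r,B\sqrt r)$ and $(C/\sqrt r,D/\sqrt r)$ are real or complex conjugate exactly when $(A,B)$ and $(C,D)$ are, while $(\rho e^{i\theta_x},\rho e^{-i\theta_x})$ is automatically a conjugate pair for real $\rho$. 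Hence every parameter vector appearing below satisfies the standing hypotheses of Proposition \ref{techlem} (in particular the moduli of all arguments, including the shifted ones, are strictly less than $1$).

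For \eqref{marpro1} I would apply \eqref{inteq} with $a=A\sqrt t$, $b=B\sqrt t$, $c=C/\sqrt s$, $d=D/\sqrt s$ and $m=\sqrt{s/t}$. Then $am=A\sqrt s$, $bm=B\sqrt s$, $cm=C/\sqrt t$, $dm=D/\sqrt t$, so $f(x;am,bm,c,d)=p(s,x)$, $f(y;a,b,me^{i\theta_x},me^{-i\theta_x})=p(s,x;t,y)$, and the right-hand side of \eqref{inteq} is $f(y;A\sqrt t,B\sqrt t,C/\sqrt t,D/\sqrt t)=p(t,y)$; this is exactly \eqref{marpro1}. For \eqref{marpro2} I would relabel the integration variable in \eqref{inteq} as $y$ and the free variable as $z$, and apply it with $a=A\sqrt u$, $b=B\sqrt u$, $c=\sqrt{s/t}\,e^{i\theta_x}$, $d=\sqrt{s/t}\,e^{-i\theta_x}$ and $m=\sqrt{t/u}$. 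Then $am=A\sqrt t$, $bm=B\sqrt t$, $cm=\sqrt{s/u}\,e^{i\theta_x}$, $dm=\sqrt{s/u}\,e^{-i\theta_x}$, so the first density is $f(y;A\sqrt t,B\sqrt t,\sqrt{s/t}\,e^{i\theta_x},\sqrt{s/t}\,e^{-i\theta_x})=p(s,x;t,y)$, the second is $f(z;A\sqrt u,B\sqrt u,\sqrt{t/u}\,e^{i\theta_y},\sqrt{t/u}\,e^{-i\theta_y})=p(t,y;u,z)$, and the right-hand side of \eqref{inteq} equals $f(z;A\sqrt u,B\sqrt u,\sqrt{s/u}\,e^{i\theta_x},\sqrt{s/u}\,e^{-i\theta_x})=p(s,x;u,z)$, which is \eqref{marpro2}.

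I do not expect a genuine obstacle here: the whole analytic content is carried by Proposition \ref{techlem}, and the only thing requiring care is the matching of parameters. The observation that makes the identities telescope is that the transition kernel \eqref{trans-d} is precisely the second factor $f(\cdot\,;a,b,me^{i\theta_x},me^{-i\theta_x})$ of \eqref{inteq} with $m=\sqrt{s/t}$; everything else is the routine verification, prepared in the second step, that the shifted parameters $am,bm,cm,dm$ still have modulus strictly less than $1$ everywhere on the open interval $I(A,B,C,D)$, so that Proposition \ref{techlem} really does apply at each use.
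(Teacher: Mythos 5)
Your proof is correct and follows essentially the same route as the paper: both identities are obtained from Proposition \ref{techlem} with exactly the parameter substitutions the paper uses ($a=A\sqrt t$, $b=B\sqrt t$, $c=C/\sqrt s$, $d=D/\sqrt s$, $m=\sqrt{s/t}$ for \eqref{marpro1}, and $a=A\sqrt u$, $b=B\sqrt u$, $c=\sqrt{s/t}\,e^{i\theta_x}$, $d=\sqrt{s/t}\,e^{-i\theta_x}$, $m=\sqrt{t/u}$ for \eqref{marpro2}). Your admissibility bookkeeping is slightly more explicit than the paper's, but the content is identical.
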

\begin{pf}
To show (\ref{marpro1}) it suffices just to use the identity
(\ref{inteq})
with $a=
A\sqrt{t}$, $b= B\sqrt{t}$, $c= C/\sqrt{s}$, $d=
D/\sqrt{s}$, and $m=\sqrt{s/t}\in(0,1)$.
We note that this substitution preserves the complex conjugate pairs
and that, by the definition of $I(A,B,C,D)$, parameters
$A\sqrt{t}$, $B\sqrt{t}$, $C/\sqrt{s}$ and $D/\sqrt{s}$ have
modulus less than one. So (\ref{inteq}) applies here and gives the
desired formula,
\begin{eqnarray*}
&&\int_{-1}^1 f \biggl(x;A\sqrt{s},B\sqrt{s},\frac{C}{\sqrt{s}},\frac
{D}{\sqrt{s}} \biggr)
f \biggl(y;A\sqrt{t},B\sqrt{t},\frac{\sqrt{s}}{\sqrt{t}}e^{i\theta
_x},\frac{\sqrt{s}}{\sqrt{t}}e^{-i\theta_x} \biggr) \,dx\\
&&\qquad={f} \biggl(y;A\sqrt{t},B\sqrt{t},\frac{C}{\sqrt{t}},\frac{D}{\sqrt
{t}} \biggr) .
\end{eqnarray*}
To get the second formula (\ref{marpro2}) we again use
(\ref{inteq})
this time with $a= A\sqrt{u}$, $b= \sqrt{u}$,
$c=\sqrt{s/t}e^{i\theta_x}$, $d=\sqrt{s/t}e^{-i\theta_x}$,
and $m=\sqrt{t/u}$. Thus we arrive at
\begin{eqnarray*}
&&\int_{-1}^1f \biggl(z;A\sqrt{u},B\sqrt{u},\frac{\sqrt{t}}{\sqrt
{u}}e^{i\theta_y},\frac{\sqrt{t}}{\sqrt{u}}e^{-i\theta_y} \biggr)
\\
&&\quad{}\times f \biggl(y;A\sqrt{t},B\sqrt{t},\frac{\sqrt{s}}{\sqrt
{t}}e^{i\theta_x},\frac{\sqrt{s}}{\sqrt{t}}e^{-i\theta_x} \biggr) \,dy
\\
&&\qquad={f} \biggl(z;A\sqrt{u},B\sqrt{u},\frac{\sqrt{s}}{\sqrt{u}}e^{i\theta
_x},\frac{\sqrt{s}}{\sqrt{u}}e^{-i\theta_x} \biggr).
\end{eqnarray*}
\upqed\end{pf}
\begin{proposition}\label{condmom}
Let $(Y_t)_{t\in I(A,B,C,D)}$ be the Markov process from Proposition
\ref{mf}, with marginal densities (\ref{marg-d}) and transition
densities (\ref{trans-d}).
For $t\in I(A,B,C,D)$,
%
%e2.15 ###
%e2.14 ###
%
\begin{eqnarray}\label{ordmom1}
{\mathbb E}(Y_t) &=& \frac{[A+B-AB(C+D)]t+C+D-CD(A+B)}{2\sqrt
{t}(1-ABCD)} ,
\\
\label{ordvar}
\operatorname{Var}(Y_t)
&=&\frac{(1-q)(1-AC)(1-AD)(1-BC)(1-BD)}{4t(1-ABCD)^2(1-qABCD)}\nonumber\\[-8pt]\\[-8pt]
&&{}\times(t-CD)(1-ABt)\nonumber
\end{eqnarray}
and for $s,t\in I(A,B,C,D)$, such that $s<t$,
%
%e2.18 ###
%e2.17 ###
%e2.16 ###
%
\begin{eqnarray}\label{covar}
\operatorname{Cov}(Y_s,Y_t)
&=&\frac{(1-q)(1-AC)(1-AD)(1-BC)(1-BD)}{4\sqrt
{st}(1-ABCD)^2(1-qABCD)}\nonumber\\[-8pt]\\[-8pt]
&&{}\times(s-CD)(1-ABt) ,\nonumber
\\
\label{condmom1}
{\mathbb E}(Y_t|{\mathcal
F}_s)&=&\frac{(A+B)(t-s)+2(1-ABt)\sqrt{s}Y_s}{2\sqrt{t}(1-ABs)} ,
\\
\label{condvar}
\operatorname{Var}(Y_t|{\mathcal
F}_s)&=&\frac{(1-q)(t-s)(1-ABt)}{4t(1-ABs)^2(1-qABs)}\bigl(1+A^2s-2A\sqrt
{s}Y_s\bigr)\nonumber\\[-8pt]\\[-8pt]
&&{}\times\bigl(1+B^2s-2B\sqrt{s}Y_s\bigr).\nonumber
\end{eqnarray}
\end{proposition}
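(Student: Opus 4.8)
The plan is to compute all five quantities in Proposition \ref{condmom} directly from Proposition \ref{awmom}, which already gives the mean and variance of an Askey--Wilson distribution $f(x;a,b,c,d)$ in closed form. The marginal density \eqref{marg-d} is $p(t,y)=f(y;A\sqrt t,B\sqrt t,C/\sqrt t,D/\sqrt t)$, so \eqref{ordmom1} and \eqref{ordvar} follow by substituting $a=A\sqrt t$, $b=B\sqrt t$, $c=C/\sqrt t$, $d=D/\sqrt t$ into \eqref{1mom} and \eqref{awvar}. In the variance one uses $abcd=ABCD$ and $abcdq=qABCD$, which are $t$-independent, while the six factors $(1-ab)(1-ac)(1-ad)(1-bc)(1-bd)(1-cd)$ regroup as $(1-ABt)\cdot(1-AC)(1-AD)(1-BC)(1-BD)\cdot(1-CD/t)$; pulling out $1/t$ from the last factor produces the stated $(t-CD)(1-ABt)/t$ shape.

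Next I would do the conditional moments. By the definition \eqref{trans-d} of the transition density, the conditional law of $Y_t$ given $Y_s=x$ is $f\bigl(\cdot;A\sqrt t,B\sqrt t,\sqrt{s/t}\,e^{i\theta_x},\sqrt{s/t}\,e^{-i\theta_x}\bigr)$, i.e. an Askey--Wilson law with parameters $a=A\sqrt t$, $b=B\sqrt t$, $c=\sqrt{s/t}\,e^{i\theta_x}$, $d=\sqrt{s/t}\,e^{-i\theta_x}$. These last two are genuinely a conjugate pair when $x\in[-1,1]$, so Proposition \ref{awmom} applies. The key reductions are $cd=s/t$, $c+d=2\sqrt{s/t}\cos\theta_x=2\sqrt{s/t}\,x$, and $ab=ABs$, $abcd=ABs$. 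Substituting into \eqref{1mom}: the numerator $a+b+c+d-abc-abd-acd-bcd=(A+B)\sqrt t+2\sqrt{s/t}\,x-(A+B)\sqrt t\cdot(s/t)-(ABs)\cdot\tfrac{1}{\sqrt t}\cdot? $; after collecting terms this becomes $(A+B)\sqrt t\,(1-s/t)+2\sqrt{s/t}\,x\,(1-ABt)$ over $2(1-ABs)$, and writing $x=Y_s$ gives \eqref{condmom1} after multiplying through by $\sqrt t/\sqrt t$. For \eqref{condvar} one inserts the same parameters into \eqref{awvar}: now $abcdq=qABs$, the denominator is $4(1-ABs)^2(1-qABs)$, the factor $(1-ab)(1-cd)(1-q)=(1-ABt)(1-s/t)(1-q)$, and the remaining four factors $(1-ac)(1-ad)(1-bc)(1-bd)=(1-A\sqrt t\cdot\sqrt{s/t}e^{i\theta_x})(1-A\sqrt t\cdot\sqrt{s/t}e^{-i\theta_x})(1-B\sqrt t\cdot\sqrt{s/t}e^{i\theta_x})(1-B\sqrt t\cdot\sqrt{s/t}e^{-i\theta_x})$. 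The first two multiply to $1+A^2s-2A\sqrt s\,x$ and the last two to $1+B^2s-2B\sqrt s\,x$ (using $t\cdot(s/t)=s$ and $2\cos\theta_x=2x$), which is exactly the product appearing in \eqref{condvar}; the leftover $1/t$ comes from $(1-s/t)=(t-s)/t$.

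Finally the covariance \eqref{covar}: by the Markov property and the tower rule, ${\rm Cov}(Y_s,Y_t)=\E\bigl(Y_s\,\E(Y_t\mid\mathcal F_s)\bigr)-\E(Y_s)\E(Y_t)$. Plugging in the linear expression \eqref{condmom1} for $\E(Y_t\mid\mathcal F_s)$ reduces this to a combination of $\E(Y_s)$ and $\V(Y_s)=\E(Y_s^2)-\E(Y_s)^2$, both already known from \eqref{ordmom1} and \eqref{ordvar} with $t$ replaced by $s$. Explicitly, ${\rm Cov}(Y_s,Y_t)=\frac{(1-ABt)\sqrt s}{\sqrt t\,(1-ABs)}\V(Y_s)$, and substituting \eqref{ordvar} at time $s$ — whose numerator already carries the factor $(s-CD)(1-ABs)$ — cancels one power of $(1-ABs)$ against the denominator and yields the claimed formula with $(s-CD)(1-ABt)$.

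The computations are all elementary algebra, so there is no deep obstacle; the main risk is bookkeeping. The step I expect to be most delicate is the simplification of the numerator in \eqref{condmom1}: one must verify that the four-term expression $a+b+c+d-abc-abd-acd-bcd$ with a \emph{complex conjugate pair} $c,d$ collapses cleanly to the real linear form $(A+B)(t-s)+2(1-ABt)\sqrt s\,Y_s$ divided by $2\sqrt t(1-ABs)$, and in particular that all dependence on $\theta_x$ enters only through $x=\cos\theta_x=Y_s$. It is worth carrying this one out carefully rather than by analogy, since the remaining identities follow the same pattern once it is in place.
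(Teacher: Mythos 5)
Your proposal is correct and follows essentially the same route as the paper: all five formulas are obtained by substituting the appropriate parameters into \eqref{1mom} and \eqref{awvar}, with the covariance derived from \eqref{condmom1} via the tower rule. Your observation that the affine form of $\E(Y_t|\mathcal F_s)$ collapses the covariance to $\frac{(1-ABt)\sqrt{s}}{(1-ABs)\sqrt{t}}\V(Y_s)$ is a slightly cleaner bookkeeping of the same computation the paper carries out.
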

\begin{pf}
Formulas (\ref{ordmom1}) and (\ref{ordvar}) follow,
respectively, from
(\ref{1mom}) and (\ref{awvar}) by taking $a= A\sqrt{t}$, $b=
B\sqrt{t}$, $c= C/\sqrt{t}$ and $d= D/\sqrt{t}$.

Similarly, the formulas (\ref{condmom1}) and (\ref{condvar})
follow, respectively, from
(\ref{1mom})
and (\ref{awvar}) by
taking $a= A\sqrt{t}$, $b=
B\sqrt{t}$, $c= \sqrt{\frac{s}{t}}e^{i\theta_x}$ and $d=
\sqrt{\frac{s}{t}}e^{-i\theta_x}$.

To obtain the covariance we make use of (\ref{condmom1}) as
follows:
\begin{eqnarray*}
\operatorname{Cov}(Y_s,Y_t) &=& {\mathbb E}(Y_s{\mathbb E}(Y_t|{\mathcal
F}_s))-{\mathbb E}(Y_s){\mathbb E}(Y_t)\\
&=& \biggl(\frac{(A+B)(t-s)}{2\sqrt
{t}(1-ABs)}-{\mathbb E}
Y_t \biggr){\mathbb E}Y_s\\
&&{} + \frac{(1-ABs)\sqrt{s}}{(1-ABt)\sqrt{t}} \bigl(\operatorname
{Var}(Y_s)+[{\mathbb E}
Y_s]^2 \bigr) .
\end{eqnarray*}

Now the formula (\ref{covar}) follows, after a calculation,
from (\ref{ordmom1}) and (\ref{ordvar}).
\end{pf}

%Next we consider conditional distributions of $Y_t$
%given the past and the future of the process.
Next we show that the conditional distribution of $Y_t$
given the past and the future of the process is given by an
Askey--Wilson density that
does not depend on parameters $A,B,C,D$.
\begin{proposition}\label{pfcond}
Let $(Y_t)_{t\in I(A,B,C,D)}$ be the Markov process with
marginal densities (\ref{marg-d}) and transition
densities (\ref{trans-d}). Then for
any $s<t<u $ in $I(A,B,C,D)$,
%$s,t,u\in I(A,B,C,D)$ such that $s<t<u $
the conditional
distribution of $Y_t$ given $\mathcal{F}_{s,u}$
has the Askey--Wilson density,
%f (y;\sqrt{\frac{t}{u}}(z+i\sqrt{1-z^2}),\sqrt{\frac{t}{u}}(z-i
%
%e2.19 ###
%
\begin{equation}\label{pfconddis}\quad
f \biggl(y;\frac{\sqrt{t}}{\sqrt{u}}\exp(i\theta_z),\frac{\sqrt
{t}}{\sqrt{u}}\exp(-i\theta_z),\frac{\sqrt{s}}{\sqrt{t}}\exp
(i\theta_x),
\frac{\sqrt{s}}{\sqrt{t}}\exp(-i\theta_x) \biggr) .
\end{equation}
(Here, $x=\cos\theta_x=Y_s$, $z=\cos\theta_z=Y_u$.)
The first two conditional moments have the form
%
%e2.21 ###
%e2.20 ###
%
\begin{eqnarray}
\label{pfcondexp}
{\mathbb E}(Y_t|{\mathcal
F}_{s,u}) &=& \frac{(u-t)\sqrt{s}Y_s+(t-s)\sqrt{u}Y_u}{\sqrt{t}(u-s)} ,
\\
\label{pfcondvar}
\operatorname{Var}(Y_t|{\mathcal
F}_{s,u}) &=& \frac{(1-q)(u-t)(t-s)}{t(u-qs)}\nonumber\\[-8pt]\\[-8pt]
&&{}\times \biggl(\frac{1}{4}
-\frac{(u\sqrt{s}Y_s-s\sqrt{u}Y_u)(\sqrt{u}Y_u-\sqrt
{s}Y_s)}{(u-s)^2} \biggr).\nonumber
\end{eqnarray}
\end{proposition}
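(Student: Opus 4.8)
The plan is to exploit the Markov property of $(Y_t)$ (Proposition \ref{mf}) to reduce the $\mathcal{F}_{s,u}$-conditional law of $Y_t$ to a function of $(Y_s,Y_u)$ alone, and then to identify this conditional density by a direct $q$-Pochhammer computation that never branches on whether $A,B$ (or $C,D$) are real or complex.

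\textbf{Step 1: reduction via the Markov property.} For any finite sample of the process at times including $s<t<u$, write the joint density through \eqref{marg-d} and \eqref{trans-d}; integrating out every coordinate other than $Y_t$ and using Chapman--Kolmogorov (Proposition \ref{mf}) to collapse the factors attached to times $\le s$ and $\ge u$, one finds that the conditional density of $Y_t$ given the sample is
\[
\frac{p(s,Y_s;t,Y_t)\,p(t,Y_t;u,Y_u)}{p(s,Y_s;u,Y_u)},
\]
which depends only on $(Y_s,Y_u,Y_t)$; letting the sample exhaust $\mathcal{F}_{s,u}$ yields the same expression for the $\mathcal{F}_{s,u}$-conditional density. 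Since the denominator does not involve $Y_t$, it suffices to show that, as a function of $y$,
\[
p(s,x;t,y)\,p(t,y;u,z)\ \propto\ f\!\left(y;\sqrt{t/u}\,e^{i\theta_z},\,\sqrt{t/u}\,e^{-i\theta_z},\,\sqrt{s/t}\,e^{i\theta_x},\,\sqrt{s/t}\,e^{-i\theta_x}\right),
\]
for then, because $s<t<u$ makes the four parameters on the right two conjugate pairs of modulus $<1$, formula \eqref{awint} shows the right side is itself a probability density in $y$, and two proportional probability densities coincide; this proves \eqref{pfconddis}.

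\textbf{Step 2: the $q$-Pochhammer cancellation.} Substitute \eqref{trans-d}, i.e.\ $p(s,x;t,y)=f(y;A\sqrt t,B\sqrt t,\sqrt{s/t}\,e^{i\theta_x},\sqrt{s/t}\,e^{-i\theta_x})$ and $p(t,y;u,z)=f(z;A\sqrt u,B\sqrt u,\sqrt{t/u}\,e^{i\theta_y},\sqrt{t/u}\,e^{-i\theta_y})$, and expand both densities through \eqref{AWdistr}--\eqref{K}, keeping products in the unmodulused form of \eqref{awint}. In the second factor, a density in the variable $z$, the only $y$-dependent pieces are $(A\sqrt t\,e^{i\theta_y},A\sqrt t\,e^{-i\theta_y},B\sqrt t\,e^{i\theta_y},B\sqrt t\,e^{-i\theta_y})_\infty$, coming from the numerator of $K(A\sqrt u,B\sqrt u,\sqrt{t/u}\,e^{i\theta_y},\sqrt{t/u}\,e^{-i\theta_y})$ (here $A\sqrt u\cdot\sqrt{t/u}=A\sqrt t$, etc., while $ab=ABu$, $cd=t/u$, $abcd=ABt$ carry no $y$), together with $\big|(\sqrt{t/u}\,e^{i\theta_z}e^{i\theta_y},\sqrt{t/u}\,e^{i\theta_z}e^{-i\theta_y})_\infty\big|^2$ in the denominator, where I use the $y\leftrightarrow z$ symmetry $\big|(\alpha e^{i\theta_y}e^{i\theta_z},\alpha e^{-i\theta_y}e^{i\theta_z})_\infty\big|^2=\big|(\alpha e^{i\theta_z}e^{i\theta_y},\alpha e^{i\theta_z}e^{-i\theta_y})_\infty\big|^2$. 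Multiplying by $p(s,x;t,y)$, the numerator factor $(A\sqrt t\,e^{i\theta_y},A\sqrt t\,e^{-i\theta_y},B\sqrt t\,e^{i\theta_y},B\sqrt t\,e^{-i\theta_y})_\infty$ cancels exactly against $\big|(A\sqrt t\,e^{i\theta_y},B\sqrt t\,e^{i\theta_y})_\infty\big|^2$ in the weight of $p(s,x;t,y)$ — an identity valid whether $A,B$ are real or a conjugate pair, since in both cases the latter equals the former. What remains, up to a $y$-free constant, is
\[
\frac{\big|(e^{2i\theta_y})_\infty\big|^2}{\sqrt{1-y^2}\;\big|(\sqrt{t/u}\,e^{i\theta_z}e^{i\theta_y},\sqrt{t/u}\,e^{-i\theta_z}e^{i\theta_y},\sqrt{s/t}\,e^{i\theta_x}e^{i\theta_y},\sqrt{s/t}\,e^{-i\theta_x}e^{i\theta_y})_\infty\big|^2},
\]
which is exactly the $y$-kernel of \eqref{AWdistr} with parameters $\sqrt{t/u}\,e^{\pm i\theta_z}$, $\sqrt{s/t}\,e^{\pm i\theta_x}$. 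No normalizing constant need be tracked, by the proportionality argument of Step 1.

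\textbf{Step 3: conditional moments.} Apply Proposition \ref{awmom} to the density \eqref{pfconddis} with $a=\sqrt{t/u}\,e^{i\theta_z}$, $b=\sqrt{t/u}\,e^{-i\theta_z}$, $c=\sqrt{s/t}\,e^{i\theta_x}$, $d=\sqrt{s/t}\,e^{-i\theta_x}$, so that $ab=t/u$, $cd=s/t$, $abcd=s/u$, $a+b=2\sqrt{t/u}\,Y_u$, $c+d=2\sqrt{s/t}\,Y_s$, and $ac\cdot bd=ad\cdot bc=s/u$. Substituting into \eqref{1mom} and clearing denominators gives \eqref{pfcondexp}; substituting into \eqref{awvar}, and using $a+b$, $c+d$ with $\cos(\theta_x+\theta_z)+\cos(\theta_x-\theta_z)=2Y_sY_u$ and $\cos(\theta_x+\theta_z)\cos(\theta_x-\theta_z)=Y_s^2+Y_u^2-1$ to evaluate $(1-ac)(1-ad)(1-bc)(1-bd)$, gives \eqref{pfcondvar} after elementary algebra, the identity $(u+s)^2-4su=(u-s)^2$ being what produces the constant $\frac14$.

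The main obstacle is the bookkeeping in Step 2: because $p(t,y;u,z)$ comes out of \eqref{trans-d} as a density in the ``wrong'' variable $z$, one must re-expand both its constant $K$ and its weight — staying with the unmodulused products of \eqref{awint} so as never to split into cases for possible complex-conjugate parameters — in order to pin down which $q$-Pochhammer factors carry the $y$-dependence and then see the cancellation that collapses the product of the two transition densities onto the claimed Askey--Wilson kernel. After that cancellation everything is routine, and the two conditional moments are an immediate corollary of Proposition \ref{awmom}.
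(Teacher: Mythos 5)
Your proposal is correct and follows essentially the same route as the paper: the Markov property reduces the $\mathcal{F}_{s,u}$-conditional density to $p(t,y;u,z)\,p(s,x;t,y)/p(s,x;u,z)$, which is then recognized as the Askey--Wilson density \eqref{pfconddis} by matching parameters in \eqref{AWdistr}, and the moments follow from Proposition \ref{awmom}. The only difference is one of detail: you carry out the $q$-Pochhammer cancellation (and the proportionality shortcut that avoids tracking constants) explicitly, where the paper leaves this bookkeeping to the reader.
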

\begin{pf}
By the Markov property it follows that the conditional density is
\begin{eqnarray*}
&&\frac{p(t,y;u,z) p(s,x;t,y)}{p(s,x;u,z)}\\
%=%WWW old incorrect expression
%f(y;A\sqrt{t},B\sqrt{t},\sqrt{s/t} e^{i\theta_x},\sqrt{s/t} e^{i
%{f(z;A\sqrt{u},B\sqrt{u},\sqrt{s/u} e^{i\theta_x},\sqrt{s/t} e^{-i
%
&&\qquad=f\biggl(z;A\sqrt{u},B\sqrt{u},\frac{\sqrt{t}}{\sqrt{u}}
e^{i\theta_y},\frac{\sqrt{t}}{\sqrt{u}} e^{-i\theta_y}\biggr)
\\
&&\qquad\quad{}\times
f\biggl(y;A\sqrt{t},B\sqrt{t},\frac{\sqrt{s}}{\sqrt{t}} e^{i\theta
_x},\frac{\sqrt{s}}{\sqrt{t}} e^{-i\theta_x}\biggr)\\
&&\qquad\quad{}\times\biggl({f\biggl(z;A\sqrt{u},B\sqrt{u},\frac{\sqrt{s}}{\sqrt{u}} e^{i\theta
_x},\frac{\sqrt{s}}{\sqrt{u}} e^{-i\theta_x}\biggr)}\biggr)^{-1}.
\end{eqnarray*}
Now the result follows by plugging in the formula above the
definition of the Askey--Wilson density
%WW \eqref{AWdistr}
(\ref{AWdistr})
with
suitably chosen parameters. The mean and variance are calculated from
(\ref{1mom}) and (\ref{awvar}).
\end{pf}
\begin{pf*}{Proof of Proposition \protect\ref{Thm-QH-small}}
If we define a new process $(Z_t)_{t\in I(A,B,C,D)}$ through
%
%e2.22 ###
%
\begin{equation}
\label{Z}
Z_t=\frac{2\sqrt{t}}{\sqrt{1-q}}Y_t ,
\end{equation}
then $(Z_t)$ is Markov and, for $s<t$, satisfies
\[
{\mathbb E}(Z_t|{\mathcal
F}_s)=\frac{(A+B)(t-s)}{\sqrt{1-q}(1-ABs)}+\frac{1-ABt}{1-ABs}Z_s
\]
so that
%$$
% (\frac{Z_t-\frac{A+B}{AB\sqrt{1-q}}}{1-ABt}, {\mathcal
%F}_t )
%$$
%
\[
\biggl(\frac{AB\sqrt{1-q}Z_t-{(A+B)}}{1-ABt}, {\mathcal
F}_t \biggr)
\]
is a martingale.
Moreover,
\begin{eqnarray*}
\operatorname{Var}(Z_t|{\mathcal F}_s)
&=&\frac{(t-s)(1-ABt)}{(1-ABs)^2(1-qABs)}\\
&&{}\times\bigl(1+A^2s-A\sqrt
{1-q}Z_s\bigr)\bigl(1+B^2s-B\sqrt{1-q}Z_s\bigr) .
\end{eqnarray*}

For the double conditioning with respect to the past and future
jointly, it follows that $(Z_t)$ satisfies quadratic harness
conditions; for $s<t<u$,
%
%e2.23 ###
%
\begin{equation}\label{Z-LR}
{\mathbb E}(Z_t|{\mathcal F}_{s,u})=\frac{u-t}{u-s}Z_s+\frac{t-s}{u-s}Z_u
\end{equation}
and
%
%e2.24 ###
%
\begin{equation}\label{Z:q-Var}\hspace*{28pt}
\operatorname{Var}(Z_t|{\mathcal
F}_{s,u})=\frac{(u-t)(t-s)}{u-qs} \biggl(1-(1-q)\frac
{(uZ_s-sZ_u)(Z_u-Z_s)}{(u-s)^2} \biggr),
\end{equation}
which correspond to the $q$-Brownian motion (see \cite{BrycWesolowski03},
Theorem 4.1).
Here, $(Z_t)$ is defined only on a possibly-bounded time
domain $I(A,B,C,D)$, and the covariance is different than in
\cite{BrycWesolowski03};
for $s<t$,
%
%e2.25 ###
%
\begin{eqnarray}\label{Z-cov}
\operatorname{Cov}(Z_s,Z_t)
&=&\frac
{(1-AC)(1-AD)(1-BC)(1-BD)}{(1-ABCD)^2(1-qABCD)}\nonumber\\[-8pt]\\[-8pt]
&&{}\times(s-CD)(1-ABt) .\nonumber
\end{eqnarray}
(The law of $Z_t$ will differ from the $q$-Gaussian law if $|A|+|B|+|C|+|D|>0$.)

%So a $q$-Gaussian non-commutative triple with the above covariance
%cannot have a classical version \cite{Bryc01} unless $A=B=C=D=0$

The covariance is adjusted by a suitable deterministic time change.
Consider a M\"{o}bius transformation
%
%e2.26 ###
%
\begin{equation}
\label{hhhh}
h(x)= \frac{x-CD}{1-ABx} ,
\end{equation}
which for $ABCD<1$ is an increasing function
with the inverse,
%
%e2.27 ###
%
\begin{equation}\label{T(t)}
T(t)=\frac{t+CD}{1+ABt} .
\end{equation}
Note that $J=J(A,B,C,D)=h(I(A,B,C,D))$.
For $t\in J(A,B,C,D)$, define
%
%e2.28 ###
%
\begin{eqnarray}\label{Def:X}\hspace*{28pt}
X_t:\!&=&X_{t;A,B,C,D,q} \nonumber\\
&=&
\frac{Z_{T(t)}-{\mathbb E}(Z_{T(t)})}{1-T(t)AB} \times\frac{(1-A B C D)
\sqrt{1-q A B C D}}{\sqrt{(1-AC) (1-BC)
(1-AD) (1-BD)}}\\
&=& \frac{\sqrt{1-q}(1+ABt)Z_{T(t)}-(A+B)t-(C+D)}{\sqrt
{(1-q)(1-AC)(1-AD)(1-BC)(1-BD)}}\sqrt{1-q A B C D}.\nonumber
\end{eqnarray}
A calculation shows that $(X_t)_{t\in J}$ has unconditional and
conditional moments as claimed:
formula (\ref{EQ:cov}) is a consequence of (\ref{Z-cov}), and (\ref
{EQ:LR}) follows from (\ref{Z-LR}). A much longer
calculation shows that (\ref{Z:q-Var}) translates into (\ref{EQ:q-Var})
with parameters (\ref{eta})--(\ref{gamma}).
\end{pf*}
%
%%%%%%%%%%%%%%%%
%%% discrete case was here
%%%%%%%%%%%%%%%%

%s3 ###
\section{Construction in the general case} \label{S:general}
Next, we tackle the issue of extending the quadratic harness from
Proposition \ref{Thm-QH-small}
to a larger time interval. The main technical difficulty is that such
processes may have a discrete
component in their distributions. The
construction is based on the Askey--Wilson distribution
\cite{AskeyWilson85}, (2.9), with slight correction as in
\cite{StokmanKoornwinder98}, (2.5).

The basic plan of the proof of Theorem \ref{Thm-QH-q} is the same as
that of the proof of Proposition \ref{Thm-QH-small}: we
define auxiliary Markov process $(Y_t)_{t\in I}$ through a family of
Askey--Wilson distributions that satisfy the Chapman--Kolmogorov
equations. Then we use formulas
(\ref{Z}) and (\ref{Def:X}) to define $(X_t)$. The main difference is
that due to an overwhelming
number of cases that arise with mixed-type distributions, we use
orthogonal polynomials to deduce all properties we need.
(A similar approach was used in \cite{BrycMatysiakWesolowski04b}.)
%%In particular, we conjecture that the conditional laws are always
%Askey-Wilson as in Proposition \ref{pfcond}, but we do not prove this
%fact in general.

%s3.1 ###
\subsection{The Askey--Wilson law}
%factor of $q^n(n+1)/2$.
The Askey--Wilson distribution $\nu(dx;a,b,c,d)$ is the
(probabilistic) orthogonality measure
of the Askey--Wilson polynomials $\{\widetilde w_n\}$ as defined in
(\ref{AW-proper}). Therefore it does not depend on the order of
parameters $a,b,c,d$. Since $ \{\widetilde A_n\}$, $\{\widetilde B_n\}$
and $\{\widetilde C_n\}$ are bounded sequences, $\nu(dx;a,b,c,d)$ is
unique and compactly supported \cite{Ismail05}, Theorems
2.5.4 and~2.5.5. %\cite[page ...]{Chihara78}.
If $|a|,|b|,|c|$, $|d|<1$, this is an absolutely continuous measure with
density (\ref{AWdistr}). For other values of parameters, $\nu
(dx;a,b,c,d)$ may have a discrete component or be purely discrete as in
(\ref{pmf}).

In general, it is quite difficult to give explicit conditions for the
existence of
the Askey--Wilson distribution $\nu(dx;a,b,c,d)$ in terms of $a,b,c,d$.
To find sufficient conditions, we will be working with sequences $\{
A_k\}$, and $ \{C_k\}$ defined by (\ref{A_n}) and (\ref{C_n}). Since
$\widetilde A_{k-1}\widetilde C_k=A_{k-1}C_k$, by
Theorem \ref{Thm-Favard},
measure $\nu(dx;a,b,c,d)$
exists for all $a,b,c,d$ such that sequences
$\{A_k\}$, $\{C_k\}$ are real, and (\ref{Farvard_condition}) holds.
%% Favard's theorem \cite[Theorem 2.5.2]{Ismail05}, %\cite[page
%...]{Chihara78},
% measure $\nu(dx;a,b,c,d)$
% exists for all $a,b,c,d$ such that $A_k,C_k\in\RR$ for all $k$ and
%$\prod_{k=0}^n A_k C_{k+1}\geq0$ for all $n$. Furthermore, it is
%known that if $N$ is the first integer such that $A_{N-1} C_{N}=0$,
%then
%$\nu(dx;a,b,c,d;q)$ is a discrete measure supported on the finite set
%of %WWW
%$N$ zeros of the polynomial
%$\bar w_{N}(x)$. %
%% (The assumption that $A_kC_{k+1}>0$ for all $k$, implies that the
%support is infinite but we cannot restrict ourselves to this case.)
If $a,b,c, d$ are either real or come in complex conjugate pairs and
(\ref{abacad}) holds,
then $A_k>0$ and $C_k\in\mathbb{R}$ for all $k$. So in this case condition,
(\ref{Farvard_condition})
%is
becomes
%
%e3.1 ###
%
\begin{equation}\label{Prod_C_k}
\prod_{k=1}^n C_k\geq0 \qquad\mbox{for all $n\geq1$}.
\end{equation}

A simple sufficient condition for (\ref{Prod_C_k}) is that in addition
to (\ref{abacad}) we have
%
%e3.2 ###
%
\begin{equation}\label{abcd}
bc,qbc,bd,qbd,cd,qcd\in{\mathbb C}\setminus[1,\infty) .
\end{equation}
Under this condition, if $a,b,c,d$ are either real or come in complex
conjugate pairs, then the corresponding measure of orthogonality $\nu
(dx;a,b,c,d)$ exists.
Unfortunately, this simple condition is not general enough for our
purposes; we need to allow also Askey--Wilson laws with finite support
as in \cite{AskeyWilson79}. In fact, such laws describe transitions
of the Markov process in the atomic
part.

We now state conditions that cover all the cases needed in this paper.
Let $m_1=m_1(a,b,c,d)$ denote the number of the products $ab, ac,ad,bc,bd,cd$
that fall into
subset $[1,\infty)$ of complex plane, and
let $m_2=m_2(a,b,c,d)$ denote the number of the products $qab$,
$qac$, $qad$, $qbc$, $qbd$, $qcd$ that fall into $[1,\infty)$.
(For $m_1=0$,
measure $\nu$ is described in \cite{Stokman1997}.)
\begin{lemma}\label{L-AW-criter}
Assume that $a,b,c, d$ are either real or come in complex conjugate
pairs and that $abcd<1$, $qabcd<1$.
%Only the following cases are possible:
Then the Askey--Wilson distribution $\nu$ exists only in the following cases:
\begin{longlist}
\item If $q\geq0$ and $m_1=0$, then $\nu(dx;a,b,c,d)$ exists
and has a continuous component.
\item If $q<0$ and $m_1=m_2=0$, then $\nu(dx;a,b,c,d)$ exists
and has a continuous component.
\item If $q\geq0$ and $m_1=2$, then $a,b,c,d\in\mathbb{R}$. In
this case, $\nu(dx;a,b,c,d)$ is well defined if either $q=0$ or the
smaller of the
two products that fall into $[1,\infty)$ is of the form $1/q^N$, and
in this latter case $\nu(dx;a,b,c,d)$ is a
purely discrete measure with $N+1$ atoms.
\item If $q<0$ and $m_1=2$, $m_2=0$ then $a,b,c,d\in\mathbb
{R}$. In
this case, $\nu(dx;a,b,\break c,d)$ is well defined if
the smaller of the two products in $[1,\infty)$ equals
$1/q^N$ with even $N$. Then $\nu(dx;a,b,c,d)$ is a
purely discrete measure with $N+1$ atoms.
\item If $q<0$, $m_1=0$ and $m_2=2$, then $a,b,c,d\in\mathbb
{R}$. In
this case, $\nu(dx;a,b,\break c,d)$ is well defined if
the smaller of the two products in $[1,\infty)$ equals
$1/q^N$ with even $N$. Then $\nu(dx;a,b,c,d)$ is a
purely discrete measure with $N+2$ atoms.
\end{longlist}
\end{lemma}
\begin{pf}
We first note that in order for $\nu$ to exist when
$q\geq0$, we must have
either $m_i=0$ or $m_i=2$, $i=1,2$. This is an elementary observation based on
the positivity of $A_0C_1$ and $A_1C_2$ [see (\ref{A_n}), (\ref{C_n})].

Similar considerations show that
if $q<0$ and $m_1m_2>0$, then (\ref{Farvard_condition}) fails, and
$\nu(dx;a,b,c,d)$ does not
exist. Furthermore, there are only three possible choices:
$(m_1,m_2)=(0,0),(0,2), (2,0)$.

%Existence of $\nu(dx;a,b,c,d)$ follows by inspection of $\prod_{k=1}^n
%A_{k-1}C_k$.
If $m_2>0$, then in cases (iii) and (iv), the product $\prod_{k=1}^n
A_{k-1}C_k>0$ for $n<N$ and is zero for $n\geq N+1$.
In case (v), $\prod_{k=1}^n A_{k-1}C_k=0$ for all $n\geq N+2$.
\end{pf}

According to Askey and Wilson \cite{AskeyWilson85} the orthogonality
law is
\[
\nu(dx;a,b,c,d)=f(x;a,b,c,d) 1_{|x|\leq1}+\sum_{x\in F(a,b,c,d)}
p(x) \delta_{x}.
\]
Here $F=F(a,b,c,d)$ is a finite or empty set of atoms.
The density $f$ is given by (\ref{AWdistr}). Note that $f$ is
sub-probabilistic for some choices of parameters. The nonobvious fact
that the total mass of $\nu$ is $1$ follows from \cite
{AskeyWilson85}, (2.11), applied to $m=n=0$.

As pointed out by Stokman \cite{Stokman1997}, condition (\ref
{Farvard_condition}) implies that if one of the parameters $a,b,c,d$ has
modulus larger than one, then it must be real. When $m_1=0$, at most two of the four
parameters $a, b, c, d$ have modulus larger than one. If there are two, then
one is positive and the other is negative.

Each of the parameters $a,b,c,d$ that has absolute value larger than
one gives rise to a set of atoms.
For example, if $a\in(-\infty,-1)\cup(1,\infty)$, then the
corresponding atoms are
at
%
%e3.3 ###
%
\begin{equation}
\label{xja}
x_j= \frac{a q^j+(aq^j)^{-1}}{2}
\end{equation}
with $j\geq0$ such that $|q^j a|\geq1$, and the corresponding
probabilities are
%
%e3.5 ###
%e3.4 ###
%
\begin{eqnarray}\label{p_0}\qquad
p(x_0)&=&\frac{(a^{-2},bc,bd,cd)_\infty}{(b/a,c/a,d/a,abcd)_\infty},\\
\label{p_j}
p(x_j)&=&p(x_0) \frac{(a^2,ab,ac,ad)_j(1-a^2
q^{2j})}{(q,qa/b,qa/c,qa/d)_j(1-a^2)} \biggl(\frac{q}{abcd} \biggr)^j,\qquad j\geq0.
\end{eqnarray}
(This formula needs to be re-written in an equivalent form to cover the
cases when $abcd=0$. It is convenient to count as an ``atom'' the case
$|q^j a|= 1$ even though the corresponding probability is $0$. Formula
(\ref{p_j}) incorporates a correction to the typo in \cite
{AskeyWilson85}, (2.10), as in \cite{KoekoekSwarttouw}, Section 3.1.)

The continuous component is completely absent when $K(a,b,c,d)=0$
[recall (\ref{K})]. Although under the assumptions of Theorem \ref
{Thm-QH-q} the univariate distributions are never purely discrete, we
still need to consider the case $K(a,b,c,d)=0$ as we need to allow
transition probabilities of Markov processes to be purely discrete.

%We will need two such cases:
% \item If one of the parameters, say $|a|>1$ is real and the remaining
%satisfy $|b|,|c|,|d|<1$, and $abcd<1$ then $F=\{x_j: j\geq0, |q^j a|>1
%
%$c=q^r a$. If the remaining two parameters are complex conjugate or
%real and satisfy $|b|,|d|<1$, then
%$\nu(dx;a,b,c,d)$ is purely discrete with atoms at \eqref{xja} with
%the probabilities as listed in \eqref{p_0} and \eqref{p_j}.
%pair, say $a,b$ such that $ab\in[1,\infty)$ then
%Suppose that
%the total number of pair-products that violate \eqref{abacad} and
%some $k=0,1,\ldots$. Then $\nu(dx;a,b,c,d)$ exists, and is purely
%atomic with $k+1$ atoms. \fbox{Does
%this hold for $q<0$?}

We remark that if $X$ has distribution $\nu(dx;a,b,c,d)$, then
formulas for ${\mathbb E}(X)$ and $\operatorname{Var}(X)$ from
Proposition \ref{awmom} hold
now for all admissible choices of parameters $a,b,c,d$, as these
expressions can equivalently be derived from
the fact that the first two Askey--Wilson polynomials integrate to zero.

%s3.2 ###
\subsection{Construction of Markov process}
\label{Sec:ConstructionofMarkovprocess}
%As previously, without loss of generality, we assume $|A|\leq|B|$ and
%$|C|\leq|D|$.
%The case $A=B=0$ corresponds to $q$-Meixner processes from
%without loss of generality we assume that $B\ne0$.

Recall $I=I(A,B,C,D;q)$ from (\ref{Imax}). As in Section \ref
{S:elementary}, we first construct the auxiliary Markov process
$(Y_t)_{t\in I}$.
We request that the univariate law $\pi_t$ of $Y_t$ is the
Askey--Wilson law
%
%e3.6 ###
%
\begin{equation}
\label{pi_t}
\pi_t(dy)=\nu\biggl(dy;A\sqrt{t},B\sqrt{t},\frac{C}{\sqrt{t}},\frac
{D}{\sqrt{t}} \biggr).
\end{equation}
In order to ensure that this univariate law exists, we use condition
(\ref{abcd}).
This condition is fulfilled when (\ref{ABCD}) holds and the admissible
range of values of $t$ is the interval
$I$ from (\ref{Imax}).
[The endpoints (\ref{T0}) and (\ref{T1}) were computed by applying M\"
{o}bius transformation (\ref{hhhh}) to the endpoints of $I$.]

For $t\in I$, let $U_t$ be the support of $\pi_t(dy)$. Under the
assumption (\ref{ABCD}), this set can be described quite explicitly
using the already mentioned
results of Askey--Wilson \cite{AskeyWilson85}:
$U_t$ is the union of $[-1,1]$ and
a finite or empty set $F_t$ of points that are of the form
%
%e3.8 ###
%e3.7 ###
%
\begin{eqnarray}
\label{xj}\qquad
x_j(t) &=& \frac12 \biggl(B\sqrt{t}q^j +\frac{1}{B\sqrt{t}q^j} \biggr)
\quad\mbox{or}\quad u_j(t)=\frac12 \biggl(\frac{Dq^j}{\sqrt{t}} +\frac{\sqrt
{t}}{Dq^j} \biggr) \quad\mbox{or}
\\
\label{uj}
y_j(t) &=& \frac12 \biggl(A\sqrt{t}q^j +\frac{1}{A\sqrt{t}q^j} \biggr)
\quad\mbox{or}\quad
v_j(t)=\frac12 \biggl(\frac{Cq^j}{\sqrt{t}} +\frac{\sqrt{t}}{Cq^j} \biggr) .
\end{eqnarray}
There is, at most, a finite number of points of each type. However, not
all such atoms can occur simultaneously. All possibilities are listed
in the following lemma.
\begin{lemma}\label{L-atomy} Under the assumptions of Theorem \ref
{Thm-QH-q}, without loss of generality, assume $|A|\leq|B|$ and
$|C|\leq|D|$. Then the following atoms occur:
\begin{itemize}
\item Atoms $u_j(t)$ appear for $D,C\in\mathbb{R}$, and $t\in I$ that
satisfy $t<D^2$; admissible indexes $j\geq0$ satisfy $D^2q^{2j}>t$.
\item Atoms $v_j(t)$ appear for $D,C\in\mathbb{R}$, and $t\in I$ that
satisfy $t<C^2$; admissible indexes $j\geq0$ satisfy $C^2q^{2j}>t$.
\item Atoms $x_j(t)$ appear for $A,B\in\mathbb{R}$, and $t\in I$ that
satisfy $t>1/B^2$; admissible indexes $j\geq0$ satisfy $tB^2q^{2j}>1$.
\item Atoms $y_j(t)$ appear for $A,B\in\mathbb{R}$, and $t\in I$ that
satisfy $t>1/A^2$; admissible indexes $j\geq0$ satisfy $tA^2q^{2j}>1$.
\end{itemize}
\end{lemma}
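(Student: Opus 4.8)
The plan is to analyze each of the four families of putative atoms in \eqref{xj}--\eqref{uj} separately by determining exactly for which values of the time parameter $t\in I$ the relevant Askey-Wilson parameter falls outside $[-1,1]$ and is real, since by the results of Askey--Wilson \cite{Askey-Wilson-85} recalled above, an atom of the form \eqref{xja} contributes precisely when the generating parameter is real with modulus $\geq 1$. Recall from \eqref{pi_t} that the four Askey-Wilson parameters are $A\sqrt t$, $B\sqrt t$, $C/\sqrt t$, $D/\sqrt t$. Under the normalization $|A|\le|B|$, $|C|\le|D|$, the parameters $A\sqrt t,B\sqrt t$ are large when $t$ is large and $C/\sqrt t,D/\sqrt t$ are large when $t$ is small, which already explains the dichotomy between the $x_j,y_j$ atoms (needing $t$ large) and the $u_j,v_j$ atoms (needing $t$ small).

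First I would treat the atoms $x_j(t)=\tfrac12\bigl(B\sqrt t q^j+(B\sqrt t q^j)^{-1}\bigr)$. These come from the parameter $a=B\sqrt t$ in \eqref{xja}--\eqref{p_j}, so they occur only when $B\sqrt t\in\RR$, which forces $A,B\in\RR$ (a conjugate pair $(A,B)$ with $A\sqrt t$ real would force both real), and when $|B\sqrt t|\ge 1$, i.e. $t>1/B^2$ (the boundary case $t=1/B^2$ giving an atom of mass zero by the convention noted after \eqref{p_j}). The admissible indices are those $j\ge 0$ with $|B q^j\sqrt t|\ge 1$, i.e. $tB^2q^{2j}>1$. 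The argument for $y_j(t)$ is identical with $a=A\sqrt t$, giving $t>1/A^2$ and $tA^2q^{2j}>1$; note that since $|A|\le|B|$ the condition $t>1/A^2$ is the more restrictive one, so $y_j$-atoms can appear only when $x_j$-atoms already do. Symmetrically, the atoms $u_j(t)$ come from $a=D/\sqrt t$: they require $D\in\RR$ (hence $C,D\in\RR$) and $|D/\sqrt t|\ge1$, i.e. $t<D^2$, with admissible $j\ge0$ satisfying $D^2q^{2j}>t$; and $v_j(t)$ from $a=C/\sqrt t$ require $t<C^2$ and $C^2q^{2j}>t$. In each case the finiteness of the set of admissible $j$ is clear: when $|q|<1$, $|a q^j|<1$ for all large $j$. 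One should also check that, as asserted in the surrounding text, at most one of the "large $t$" pair and the "small $t$" pair of atom families is active for a given $t$; this follows because $I\subseteq(\max\{0,CD,qCD\},1/\max\{0,AB,qAB\})$ from \eqref{Imax} together with the assumption \eqref{ABCD}, and one verifies that $D^2\le 1/B^2$ cannot force a $t$ simultaneously below $D^2$ and above $1/B^2$ unless $B^2D^2\ge 1$, excluded by $BD\notin[1,\infty)$ when $B,D$ are real; I would spell this comparison out using the bounds $|D|^2\ge|CD|\ge\max\{0,CD,qCD\}$ and the analogous bound for $AB$, exactly as in the remark following Proposition \ref{Thm-QH-small}.

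The main obstacle is bookkeeping rather than depth: one must be careful that the condition "$a$ real and $|a|\ge1$" is genuinely the right criterion in the degenerate situations allowed by \eqref{ABCD} (e.g. some products equal to $0$, or parameters equal to $0$ as in $A=0$), and that an atom generated by two different parameters is not double-counted — but since here the four parameters $A\sqrt t,B\sqrt t,C/\sqrt t,D/\sqrt t$ have generically distinct moduli, and the ranges of $t$ producing $x_j$ versus $u_j$ atoms are disjoint, this is not an issue. I would also invoke Lemma \ref{L-AW-criter} to confirm that in every case where atoms are claimed the measure $\nu$ actually exists, so that the list is not vacuous. The only genuinely delicate point is confirming that the four stated $t$-ranges ($t<D^2$, $t<C^2$, $t>1/B^2$, $t>1/A^2$) are each nonempty subintervals of $I$ in the relevant parameter regimes — but this is precisely the content of the remark after Proposition \ref{Thm-QH-small} combined with the definition \eqref{Imax} of $I$, so I would simply cite that comparison.
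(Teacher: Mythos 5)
Your core argument is exactly what the paper intends (the paper in fact prints no proof of this lemma, only the pointer to the Askey--Wilson description of atoms recalled around \eqref{xja}--\eqref{p_j}): each real parameter among $A\sqrt t$, $B\sqrt t$, $C/\sqrt t$, $D/\sqrt t$ of modulus exceeding $1$ contributes the atoms $\frac12(aq^j+(aq^j)^{-1})$ for $|aq^j|\geq 1$, and translating $|B\sqrt t|>1$, $|A\sqrt t|>1$, $|D/\sqrt t|>1$, $|C/\sqrt t|>1$ into conditions on $t$ gives precisely the four bullets. That part is correct and complete.

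One step you added is wrong, although it is not needed for the statement: you claim that the ``large $t$'' and ``small $t$'' atom families cannot be active simultaneously, ``excluded by $BD\notin[1,\infty)$.'' But $BD\notin[1,\infty)$ does not rule out $BD\leq -1$, hence does not rule out $B^2D^2\geq 1$. For instance $A=-\epsilon$, $B=2$, $C=\epsilon$, $D=-2$, $q\geq 0$, $\epsilon$ small satisfies all hypotheses of Theorem \ref{Thm-QH-q} and the normalization $|A|\leq|B|$, $|C|\leq|D|$, yet $1/B^2=1/4<4=D^2$, so for $t\in(1/4,4)\cap I$ both $x_j$- and $u_j$-atoms occur. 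The lemma does not assert exclusivity (and the surrounding text does not either; indeed the case analysis in Proposition \ref{Positivity} has to handle coexisting atom types), so your proof of the four bullets stands, but you should delete the disjointness paragraph rather than rely on it.
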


(The actual number of cases is much larger as in proofs one needs to
consider all nine possible choices for the end points of the time
interval $I$.)

Next, we specify the transition probabilities of $Y_t$.
\begin{proposition}
\label{Positivity}
For $s<t$, $s,t\in I$ and any real $x\in U_s$ measures
\[
P_{s,t}(x,dy)=\nu\Biggl(dy;A\sqrt{t},B\sqrt{t},\sqrt{\frac
{s}{t}}\bigl(x+\sqrt{x^2-1}\bigr),\sqrt{\frac{s}{t}}\bigl(x-\sqrt{x^2-1}\bigr) \Biggr)
\]
are well defined. Here, if $|x|\leq1$ we interpret $ x\pm\sqrt
{x^2-1}$ as $e^{\pm i\theta_x}=e^{\pm i\arccos(x)}$.
\end{proposition}
\begin{pf}
%Clearly, $P_{s,t}(x,dy)$ for $x\in[-1,1]$ is absolutely continuous and
%has density (\ref{trans-d}).
For $x \in [-1, 1]$, measures $P_{s,t} (x, dy)$ are well defined as
conditions (\ref{abacad}) and (\ref{abcd}) hold.
This covers all possibilities
when $(A,B)$ and $(C,D)$ are conjugate pairs or when
$|A|\sqrt{s},|B|\sqrt{s},|C|/\sqrt{s},|D|/\sqrt{s}<1$, as then
$U_s=[-1,1]$.

It remains to consider $x$ in the atomic part of $\pi_s(dx)$.
Relabeling the parameters if necessary, we may assume $|A|\leq|B|$
and $|C|\leq|D|$.
For each of the cases listed in Lemma \ref{L-atomy},
we need to show that the choice of parameters $a=A\sqrt{t}$,
$b=B\sqrt{t}$, $c=\sqrt{\frac{s}{t}}(x+\sqrt{x^2-1})$,
$d=\sqrt{\frac{s}{t}}(x-\sqrt{x^2-1})$
leads to nonnegative products $\prod_{k=0}^n A_k C_{k+1}\geq0$
[recall (\ref{A_n}) and (\ref{C_n})].
We check this by considering all possible cases for the endpoints of
$I$ and all admissible choices
of $x$ from the atoms of measure $\pi_s$. In the majority of these
cases, condition (\ref{abcd}) holds,
so, in fact,
$A_kC_{k+1}>0$ for all $k$.

Here is one sample case that illustrates what kind of reasoning is
involved in the ``simpler cases'' where (\ref{abcd}) holds and one
example of a more complicated case where (\ref{abcd}) fails.
\begin{itemize}
\item \textit{Case} $CD<0$, $AB<0$, $q\geq0$:
in this case, $A,B,C,D$ are real, $I=(0,\infty)$ and assumption
$ABCD<1$ implies $D^2<1/B^2$. A number of cases arises from Lemma
\ref{L-atomy}, and we present only one of them.
\begin{itemize}
\item \textit{Sub-case} $x=v_j(s)$: then $0<s<C^2$ and the
Askey--Wilson parameters of $P_{s,t}(x;dy)$ are
\[
a=A\sqrt{t},\qquad b=B\sqrt{t},\qquad c=\frac{ q^j C}{\sqrt{t}},\qquad
d=\frac{ {s}}{C q^j\sqrt{t}}.
\]
Thus
\begin{eqnarray*}
ab &=& ABt<0<1,\qquad ac=ACq^j,\qquad ad=\frac{As}{Cq^j},\qquad bc= q^j BC,
\\
bd &=& \frac{Bs}{Cq^j },\qquad cd=s/t<1
\end{eqnarray*}
and
\begin{eqnarray*}
qab &=& qABt<0<1 ,\qquad qac=ACq^{j+1}<ACq^j,\qquad qad=\frac{As}{Cq^{j-1}},\\
qbc &=& q^{j+1} BC,\qquad qbd=\frac{Bs}{Cq^{j-1} },\qquad qcd=qs/t<1 .
\end{eqnarray*}
Since $s<C^2q^{2j}$, this implies $|ad|<|A|\sqrt{s}<1$ as $s<C^2\leq
D^2<1/B^2\leq1/A^2$. For the same reason, $|bd|<1$. Of course,
$|qad|<|ad|<1$, $|qbd|<|bd|<1$.

Finally, since $AC, qAC,BC,qBC<1$, we have $ac,qac,bc,qbc<1$. Thus by
Lemma \ref{L-AW-criter}(i), $P_{s,t}(x,dy)$ is well defined.

\end{itemize}
[We omit other elementary but lengthy sub-cases that lead to
(\ref{abcd}).]

\item \textit{Case} $A,B,C,D\in\mathbb{R}$, $AB>0$, $CD>0$: here
$I(A,B,C,D)=(CD, 1/(AB))$ is nonempty. Again a number of cases
arises from Lemma \ref{L-atomy}, of which we present only one.
\begin{itemize}
\item \textit{Sub-case} $x=x_j(s)$:
here $1/B^2<s<t<1/(AB)$ and $B\sqrt{s}|q^j|\geq1$.
Then the Askey--Wilson parameters of $P_{s,t}(x;dy)$ are
\[
a=A\sqrt{t},\qquad b=B\sqrt{t},\qquad c=\frac{s q^j B}{\sqrt{t}},\qquad
d=\frac{1}{q^j B\sqrt{t}}.
\]
Here, Lemmas \ref{L-AW-criter}(ii) or \ref{L-AW-criter}(iv)
applies with $m_1=2$ and $m_2=0$ when $q\geq0$ or $j$ is even, and
Lemma \ref{L-AW-criter}(v) applies with $m_1=0$, $m_2=2$ when $q<0$
and $j$ is odd.
To see this, we look at the two lists of pairwise products.
\begin{eqnarray*}
ab &=& ABt<1,\qquad ac=ABsq^j<1,\qquad ad=\frac{A}{Bq^j},\qquad bc=s q^j B^2, \\
bd &=& 1/q^j,\qquad cd=s/t<1
\end{eqnarray*}
and
\begin{eqnarray*}
qab &=& qABt<1,\qquad qac=ABsq^{j+1}<1,\qquad qad=\frac{A}{Bq^{j-1}},\\
qbc &=& sq^{j+1} B^2,\qquad qbd=1/q^{j-1},\qquad qcd=qs/t<1 .
\end{eqnarray*}

Since $|A|\leq|B|$ we see that $1/(AB)<1/A^2$ so $|A|<1/\sqrt{s}$ and
$|\frac{A}{Bq^j}|<\frac{\sqrt{s}}{|q^jB|}\leq1$. This shows that
$ad<1$ and $qad<1$.

It is clear that both $bc,bd>1$ when $q>0$ or $j$ is even, and that
$qbc$, $qbd>1$ when $q<0$ and $j$ is odd. Thus by Lemma \ref
{L-AW-criter}, $P_{s,t}(x,dy)$ is well defined.
\end{itemize}
\end{itemize}
%
% \begin{equation}
% \label{Prod>0}
% \prod_{k=0}^n (1-ab q^k)(1-ac q^k)(1-ad q^k)(1-bc q^k)(1-bd q^k)(1-cd
%q^k)\geq0.
% \end{equation}

Other cases are handled similarly and are omitted.
\end{pf}

In the continuous case, $p_t(dy)=p(t,y)\,dy$ and
$P_{s,t}(x,dy)=p(s,x;t,y)\,dy$ correspond to (\ref{marg-d}) and (\ref
{trans-d}), respectively.
We now extend Proposition \ref{mf} to a larger set of measures.
\begin{proposition}
\label{mf-gen} The family of probability measures $\pi_t(dy)$
together with the family of transition probabilities $P_{s,t}(x,dy)$
defines a Markov process $(Y_t)_{t\in I}$ on $\bigcup_{t\in I} U_t$.
That is,
$P_{s,t}(x,dy)$ is defined for all $x\in U_s$. For any $s<t$ from
$I(A,B,C,D)$ and a Borel set $V\subset\mathbb{R}$,
%
%e3.9 ###
%
\begin{equation}\label{marg}
\pi_t(V)=\int_\mathbb{R}P_{s,t}(x,V)\pi_s(dx)
\end{equation}
and for $s<t<u$ from $I$,
%
%e3.10 ###
%
\begin{equation}\label{cond}
P_{s,u}(x,V)=\int_{U_t} P_{t,u}(y,V) P_{s,t}(x,dy) \qquad\mbox{for all
$x\in U_s$} .
\end{equation}
\end{proposition}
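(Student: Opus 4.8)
The plan is to reduce both Chapman--Kolmogorov identities \eqref{marg} and \eqref{cond} to a single measure-level identity that extends Proposition~\ref{techlem} --- equivalently the projection formula of Nassrallah and Rahman \cite{Nassrallah-Rahman85} --- from the absolutely continuous case to the full admissible range, allowing discrete components. Precisely, the key statement to prove is: for $a,b,c,d$ that are real or form conjugate pairs $a=\bar b$ and/or $c=\bar d$, for which all Askey--Wilson laws below exist, and for $m\in(0,1)$,
$$\int \nu(dx;am,bm,c,d)\;\nu\!\left(\,\cdot\,;a,b,me^{i\theta_x},me^{-i\theta_x}\right)=\nu\!\left(\,\cdot\,;a,b,cm,dm\right)$$
as probability measures. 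Granting this, \eqref{marg} is the instance $a=A\sqrt t$, $b=B\sqrt t$, $c=C/\sqrt s$, $d=D/\sqrt s$, $m=\sqrt{s/t}$ (so $am=A\sqrt s$, $bm=B\sqrt s$, $cm=C/\sqrt t$, $dm=D/\sqrt t$), while \eqref{cond} is the instance $a=A\sqrt u$, $b=B\sqrt u$, $c=\sqrt{s/t}\,e^{i\theta_x}$, $d=\sqrt{s/t}\,e^{-i\theta_x}$, $m=\sqrt{t/u}$ (so $am=A\sqrt t$, $bm=B\sqrt t$, $cm=\sqrt{s/u}\,e^{i\theta_x}$, $dm=\sqrt{s/u}\,e^{-i\theta_x}$), the outer integration variable playing the role of $x$; here the pair $c,d$ is a conjugate pair when $x\in[-1,1]$ and a pair of reals when $x$ is an atom of $\pi_s$, both of which are allowed. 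Well-definedness of $P_{s,t}(x,dy)$ for every $x\in U_s$ is Proposition~\ref{Positivity}.

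To prove the displayed identity I would follow Proposition~\ref{mf} but work with moments rather than densities. Both sides are probability measures: the right-hand side is a normalized Askey--Wilson orthogonality measure, and the left-hand side is a mixture, against the probability measure $\nu(dx;am,bm,c,d)$, of the probability kernels $\nu(\,\cdot\,;a,b,me^{i\theta_x},me^{-i\theta_x})$, which are well defined for $x$ in the support of $\nu(\,\cdot\,;am,bm,c,d)$ by the reasoning of Proposition~\ref{Positivity} together with Lemma~\ref{L-AW-criter}. Both sides are compactly supported --- the left-hand side because the atoms of the kernels stay in a fixed bounded set as $x$ ranges over the compact support of $\nu(\,\cdot\,;am,bm,c,d)$ --- hence each is determined by its moments. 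Now the $k$-th moment of each side is a rational function of $(a,b,c,d,m,q)$: the moments of an Askey--Wilson law are universal polynomials in its recurrence coefficients \eqref{A_n}--\eqref{C_n}, which are rational in the parameters; and for the left-hand side the inner integral $\int y^k\,\nu(dy;a,b,me^{i\theta_x},me^{-i\theta_x})$ is, by this fact and the symmetry of the moments in the last two parameters, a polynomial of degree at most $k$ in $x$ with coefficients rational in $(a,b,m,q)$, so integrating it against $\nu(dx;am,bm,c,d)$ again yields a rational function. When $|a|,|b|,|c|,|d|$ and $m$ are small enough all these measures are absolutely continuous and the identity reduces to Proposition~\ref{techlem}; holding on a non-empty open subset of parameter space, it then holds on the whole admissible range by analytic continuation.

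Two bookkeeping points finish the argument. First, one must check that for the two substitutions above all the Askey--Wilson laws in question exist throughout every parameter regime of Theorem~\ref{Thm-QH-q}; this is a case analysis parallel to, and largely subsumed by, the proofs of Lemma~\ref{L-atomy} and Proposition~\ref{Positivity}, via the criteria of Lemma~\ref{L-AW-criter}. Second, for \eqref{cond} to be literally an integral over $U_t$ one needs $P_{s,t}(x,\cdot)$ to be supported in $U_t$ for every $x\in U_s$: for $x\in[-1,1]$ this is immediate from the description \eqref{xj}--\eqref{uj} of the atoms, and for an atom $x$ of $\pi_s$ it follows a posteriori from \eqref{marg}, since such $x$ carries positive $\pi_s$-mass while the mixture $\int P_{s,t}(x,\cdot)\,\pi_s(dx)=\pi_t$ puts no mass outside $U_t$. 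I expect the main obstacle to be the analytic-continuation step: one must be certain that the moments of an Askey--Wilson law with a nontrivial discrete component are given by the same rational functions of $a,b,c,d$ as in the continuous case, which is exactly where the recurrence (Favard) description of $\nu(dx;a,b,c,d)$, rather than the explicit integral \eqref{AWdistr}, does the work. Once that is in place, the moment identity is forced on the entire domain and the Chapman--Kolmogorov equations, hence the Markov property, follow.
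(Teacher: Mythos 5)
Your proof is correct in strategy but follows a genuinely different route from the paper's. The paper never extends Proposition~\ref{techlem} to the measure level; instead it works entirely through orthogonal polynomials: it introduces the polynomials $Q_n$ orthogonal with respect to $P_{s,t}(x,dy)$, uses the Askey--Wilson connection coefficients to prove \eqref{Connection} (with $b_{n,n}\ne0$ independent of $x$), deduces by induction the projection/martingale formula \eqref{Projection}, and then obtains both \eqref{marg} and \eqref{cond} by checking that the relevant measures integrate all $p_n$ (respectively all $Q_n$) to zero, invoking moment determinacy of compactly supported measures. You instead prove the single measure-level identity generalizing \eqref{inteq} by observing that the moments of any Askey--Wilson law, discrete component or not, are the universal rational functions of $(a,b,c,d,q)$ read off from the recurrence \eqref{A_n}--\eqref{C_n} (the $k$-th moment being the $(0,0)$ entry of the $k$-th power of the Jacobi matrix), so that both sides of the identity have rational moments agreeing on the open density regime of Proposition~\ref{techlem} and hence everywhere; this is sound, and the degree-$\le k$ polynomiality in $x$ of the inner moment (via symmetry in the pair $me^{\pm i\theta_x}$, so dependence only on $c+d=2mx$ and $cd=m^2$, with $x$-free denominators since $abcd=abm^2$) does hold. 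What each approach buys: yours is arguably cleaner for Chapman--Kolmogorov alone and in passing establishes the measure-level projection formula that the paper only states as \eqref{tech-gen} and verifies by direct computation in a special discrete case in Section~\ref{S: discrete}; the paper's route costs the connection-coefficient lemma up front but delivers the martingale property \eqref{Projection} as an intermediate product, which is then indispensable for Theorem~\ref{T3} and for the Jacobi-matrix proof of Theorem~\ref{Thm-QH-q}, so on your route that formula would still have to be proved separately (though it, too, would follow from your extended identity by integrating $p_n(y;t)$ against both sides). Both arguments share the same two load-bearing external facts: Proposition~\ref{Positivity} for well-definedness of the kernels on the atoms, and determinacy of compactly supported measures by their moments.
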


To prove this result we follow the same plan that we used in \cite
{BrycMatysiakWesolowski04b} and we will use similar notation. We
deduce all necessary information from the orthogonal polynomials.
Consider two families of polynomials. The first family is
%$$p_n(x;t)=t^{n/2}
%)$$
%
\[
p_n(x;t)=t^{n/2}
\bar w_n(x;a,b,c,d)
\]
with $\bar w_n$ defined by (\ref{AW}) and
%
%e3.11 ###
%
\begin{equation}\label{p-parms}
a=A\sqrt{t},\qquad b=B\sqrt{t},\qquad c=C/\sqrt{t},\qquad d=D/\sqrt{t}.
\end{equation}
The second family is
%
%e3.12 ###
%
\begin{equation}\label{Q_dla_Jacka}
Q_n(y;x,t,s)=t^{n/2} \bar w_n(y;a,b,\widetilde c, \widetilde d ),
\end{equation}
where $a,b$ are in (\ref{p-parms}), and
%
%e3.13 ###
%
\begin{equation}
\label{Q2p_params}
\widetilde c=\sqrt{\frac{s}{t}} \bigl(x+\sqrt{x^2-1} \bigr),\qquad \widetilde
d=\sqrt{\frac{s}{t}} \bigl(x-\sqrt{x^2-1} \bigr).
\end{equation}
%
%$Q_n$ that are
%$$\bar w_n (y;A\sqrt{t},B\sqrt{t},\sqrt{\frac{s}{t}}(x+\sqrt{x^2-1}),
%multiplied by
% \label{Multiplier}
% (\sqrt{\frac{s}{t}}(x+\sqrt{x^2-1}),\sqrt{\frac{s}{t}}(x-

As real multiples of the corresponding Askey--Wilson polynomials
$\widetilde w_n$, polynomials $\{p_n\}$
are orthogonal with respect to $\pi_t(dx)$, and polynomials $\{Q_n\}$
are orthogonal with respect to $P_{s,t}(x,dy)$ when $x\in U_s$.
% the parameters take appropriate values.
%$P_{s,t}(x,dy)$}
It may be interesting to note that if $x$ is in the atomic part of
$U_s$ and $a,b$ from (\ref{p-parms}) satisfy $ab<1<b$, then
%for certain choices of parameters,
the family $\{Q_n\}$ corresponds to a finitely-supported measure. As
explained in Theorem \ref{Thm-Favard} we still have the infinite
family of polynomials $\{Q_n\}$ in this case. This is important to us
as we use this infinite family to infer the Chapman--Kolmogorov
equations and the martingale property of the infinite family $\{p_n\}$.

The following algebraic identity is crucial for our proof.
\begin{lemma} For $n\geq1$,
%
%e3.14 ###
%
\begin{equation}\label{Connection}
Q_n(y;x,t,s)=\sum_{k=1}^n b_{n,k}(x,s) \bigl(p_k(y;t)-p_k(x;s) \bigr),
\end{equation}
where $b_{n,k}(x,s)$ does not depend on $t$ for $1\leq k\leq n$,
$b_{n,n}(x,s)$ does not depend on $x$, and $b_{n,n}(x,s)\ne0$.
\end{lemma}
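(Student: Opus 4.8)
The plan is to derive the connection formula \eqref{Connection} directly from the known expansion of Askey--Wilson polynomials in terms of Askey--Wilson polynomials with one shifted parameter, due to Askey and Wilson \cite{Askey-Wilson-85}. Recall that $Q_n(y;x,t,s)$ is, up to the normalization factor $\left(\sqrt{s/t}(x+\sqrt{x^2-1}),\sqrt{s/t}(x-\sqrt{x^2-1})\right)_n$, the Askey--Wilson polynomial $\bar w_n$ with parameters $a=A\sqrt{t}$, $b=B\sqrt{t}$, $c=\sqrt{s/t}(x+\sqrt{x^2-1})$, $d=\sqrt{s/t}(x-\sqrt{x^2-1})$. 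The key observation is that the parameters $c$ and $d$ of $Q_n$ multiply to $cd=s/t$, independent of $x$, and that $c+d=2\sqrt{s/t}\,x$. So varying $x$ amounts to varying the sum $c+d$ while keeping the product fixed, which is precisely the setting of the Askey--Wilson connection coefficient formula expressing $\bar w_n$ in one set of parameters as a finite sum of $\bar w_k$ in another set.

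First I would write down, using the ${}_4\varphi_3$ representation \eqref{4fi3}, the expansion of $Q_n(y;x,t,s)$ as a sum $\sum_{k=0}^n \tilde b_{n,k}(x,s)\, p_k(y;t)$, where $p_k(y;t)=\bar w_k(y;A\sqrt t,B\sqrt t,C/\sqrt t,D/\sqrt t)$. This is exactly the Askey--Wilson connection formula: both families are Askey--Wilson polynomials in the variable $y$ sharing the parameters $A\sqrt t$ and $B\sqrt t$, and differing in the remaining two parameters, whose product is $s/t$ for $Q_n$ versus $CD$ for $p_k$. The coefficients $\tilde b_{n,k}(x,s)$ are explicit $q$-hypergeometric expressions; crucially, inspecting the Askey--Wilson formula shows that they involve $t$ only through ratios that cancel — the parameters $A\sqrt t$, $B\sqrt t$ enter $Q_n$ and $p_k$ in the same way — so after the normalization built into $Q_n$ the coefficients depend on $x,s$ but not on $t$. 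The top coefficient $\tilde b_{n,n}$ comes from matching leading terms and equals $1$ (or a constant determined purely by the normalization), in particular nonzero and independent of $x$.

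Second, I would evaluate the constant term. Setting $y$ equal to the variable value corresponding to $x$ at time $s$ — that is, using $p_k(x;s)=\bar w_k(x;A\sqrt s,B\sqrt s,C/\sqrt s,D/\sqrt s)$ — one checks that $Q_n$ evaluated appropriately reduces to a telescoping/degenerate case. The cleanest route: the measure $P_{s,t}(x,dy)$ from Proposition \ref{Positivity} has, by Proposition \ref{awmom} applied to its parameters, the property that the first Askey--Wilson polynomial integrates to zero, and more generally $\{Q_n\}$ are orthogonal with respect to it with $Q_0=1$. Hence $\int Q_n(y;x,t,s)\,P_{s,t}(x,dy)=0$ for $n\ge 1$. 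But the Markov/Chapman--Kolmogorov structure (or simply the projection property of Askey--Wilson polynomials that is the content of what follows) forces the constant obtained by integrating $\sum_k \tilde b_{n,k}(x,s)p_k(y;t)$ to be $\sum_k \tilde b_{n,k}(x,s) p_k(x;s)$ up to sign — this pins down that the ``constant term'' of the expansion, when written in the form \eqref{Connection}, is $-\sum_k b_{n,k}(x,s)p_k(x;s)$, i.e. the expansion takes the stated shape $Q_n(y;x,t,s)=\sum_{k=1}^n b_{n,k}(x,s)(p_k(y;t)-p_k(x;s))$ with $b_{n,k}=\tilde b_{n,k}$ for $k\ge 1$ and the $k=0$ term absorbed. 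Matching $Q_0=1$ and the recurrences \eqref{AW}, \eqref{Q-rec} confirms the normalization $b_{n,n}(x,s)\neq 0$, independent of $x$.

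I expect the main obstacle to be bookkeeping: verifying precisely that the Askey--Wilson connection coefficients, after incorporating the Pochhammer normalization factor that defines $Q_n$, are genuinely independent of $t$, and that the $k=0$ term of the raw ${}_4\varphi_3$ expansion rearranges exactly into $-\sum_{k\ge1} b_{n,k}(x,s)p_k(x;s)$ rather than leaving a residual constant. This is a finite computation with $q$-Pochhammer symbols using the simplification rules \eqref{poch1}, \eqref{poch2}, but one must be careful about the cases where $x$ lies in the discrete part of $U_s$, so that $x\pm\sqrt{x^2-1}$ may have modulus $\ge 1$; there the identity \eqref{Connection} is to be read as an identity of polynomials in $y$ with coefficients given by the (everywhere-defined) recurrence \eqref{Q-rec}, which is why $Q_n$ was defined via the three-step recurrence for all $x$ in the first place, and the polynomial identity then holds by analytic continuation in $x$ from the interval $[-1,1]$ where all quantities are manifestly finite.
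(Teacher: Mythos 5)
Your first step coincides with the paper's: both invoke the Askey--Wilson connection formula (formula (6.3) of \cite{Askey-Wilson-85}, with $\beta=b$) to write $Q_n(y;x,t,s)=\sum_{k=0}^n b_{n,k}(x,s)p_k(y;t)$, and the independence of $t$ follows exactly as you say, because the coefficients involve the parameters only through the products $ab\gamma\delta=ABCD$, $abcd=ABs$, $a\gamma=A\sqrt{s}(x+\sqrt{x^2-1})$, $a\delta=A\sqrt{s}(x-\sqrt{x^2-1})$, $ac=AC$, $ad=AD$. Your claim that $b_{n,n}$ is forced by matching leading coefficients, hence independent of $x$ (though not equal to $1$; the paper computes it explicitly as $(q,q^{n-1}ABCD)_n/(q,q^{n-1}ABs,AC,AD)_n$ via \eqref{poch2}), is acceptable, since the leading coefficients of both families are governed by $\bar A_n$ and $A_n$, which do not involve $x$.

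The genuine gap is in your ``cleanest route'' for identifying the constant term. You propose to integrate the expansion against $P_{s,t}(x,dy)$ and use the projection property $\int p_k(y;t)P_{s,t}(x,dy)=p_k(x;s)$ to pin down $b_{n,0}$. But that projection property is Proposition \ref{L36}, which in the paper is \emph{proved by induction from this very lemma}: the whole purpose of rewriting the expansion in the form \eqref{Connection} is to make the induction for \eqref{Projection} go through. The argument is therefore circular, and it cannot be repaired by a joint induction either: at stage $n+1$, integrating $Q_{n+1}=\sum_k b_{n+1,k}p_k(y;t)$ against $P_{s,t}(x,dy)$ yields a single equation containing two unknowns, namely $b_{n+1,0}$ and $\int p_{n+1}(y;t)P_{s,t}(x,dy)$, so nothing is determined. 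The non-circular argument --- which is the one gestured at in your ``telescoping/degenerate case'' aside but not carried out --- is purely algebraic: from the recurrence \eqref{Q-rec} one checks that $Q_1(x;x,s,s)=0$ (since $a+1/a-A_0-C_0=2x$ when $t=s$) and $Q_2(x;x,s,s)=0$ (since $\bar C_1=0$ when $t=s$), whence $Q_n(x;x,s,s)=0$ for all $n\geq 1$; substituting $y=x$, $t=s$ into the raw expansion then gives $\sum_{k=0}^n b_{n,k}(x,s)p_k(x;s)=0$, and subtracting this identity produces \eqref{Connection}. You should replace the appeal to the projection formula with this evaluation at $t=s$, $y=x$.
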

\begin{pf}When $|A|+|B|\ne0$, due to symmetry, we may assume that
$A\ne0$. From Theorem \ref{Thm-connection}, with
parameters (\ref{p-parms}) and (\ref{Q2p_params}),
we get
%
%e3.15 ###
%
\begin{equation}\label{Qbp}
Q_n(y;x,t,s)=\sum_{k=0}^n b_{n,k} p_k(y;t),
\end{equation}
where\vspace*{1pt} $b_{n,k}=t^{(n-k)/2}\bar c_{k,n} $ is given by (\ref{bar_c_kn}).
Coefficients $b_{n,k}$ do not depend on $t$ as
$t^{(n-k)/2}/a^{n-k}=A^{k-n}$, and $t$ cancels out in all other entries
on the right-hand side of (\ref{connect-AW}):
\begin{eqnarray*}
ab\widetilde c\widetilde d &=& ABs,\qquad abcd=ABCD,\\
a\widetilde c &=& A\sqrt{s}\bigl(x+\sqrt{x^2-1}\bigr),\qquad ac=AC, \\
a\widetilde d &=& A\sqrt{s}\bigl(x-\sqrt{x^2-1}\bigr),\qquad a d=AD.
\end{eqnarray*}
We also see that
\[
b_{n,n}(x,s)=(-1)^n q^{n(n+1)/2}\frac
{(q^{-n},q^{n-1}ABs)_n}{(q,q^{n-1}ABCD)_n}
\]
does not depend on $x$.
Using (\ref{poch2}) we get
\[
b_{n,n}(x,s)=\frac{(q^{n-1}ABs)_n}{(q^{n-1}ABCD)_n},
\]
which is nonzero also when $q=0$.

The case $A=B=0$ is handled similarly, based on (\ref{bar_c_0}). In
this case $b_{n,n}(x,s)=1$.

Next we use (\ref{Q_dla_Jacka}) to show that $Q_n(x;x,s,s)=0$ for
$n\geq1$. We observe that
(\ref{AW}) used for parameters (\ref{p-parms}) and (\ref
{Q2p_params}) gives
$Q_1(x;x,s,s)=0$ as $B_0=a+1/a-A_0/a-C_0a=2x$ when $t=s$, and
$Q_2(x;x,s,s)=0$ as $\bar C_1(a,b,\widetilde c,\widetilde d)=0$ when
$t=s$. So (\ref{AW}) implies that $Q_n(x;x,s,s)=0$ for all $n\geq1$,
and (\ref{Qbp}) implies
\[
\sum_{k=0}^n b_{n,k}(x,s)p_k(x;s)=0.
\]
Subtracting this identity from (\ref{Qbp}) we get (\ref{Connection}).
\end{pf}
%
%%% old proof %%%%%%%%%%%%%%%%%
%An adaptation of \cite[Formula (6.3)]{AskeyWilson85} with $\beta=b$
%shows that
% Q_n(y;x,t,s)=\sum_{k=0}^n b_{n,k}(x,s) p_k(y;t)
%with the connection coefficients
%b_{n,k}(x,s)=(-1)^kq^{k(k+1)/2}(a\gamma,a\delta)_n
%{ _4\varphi_3} (\begin{matrix}
% q^{k-n}, ab\gamma\delta q^{n+k-1},ac q^k,adq^k\\
% q^{2k}abcd, a\gamma q^k, a\delta q^k
% \end{matrix} ;q ),
%which do not depend on $t$, as
%& ab\gamma\delta=ABCD, & abcd=ABs,& \\
% &a\gamma=A\sqrt{s}(x+\sqrt{x^2-1}),& ac=AC,& \\
% &a\delta=A\sqrt{s}(x-\sqrt{x^2-1}),& a d=AD&.
%By \eqref{poch2},
%b_{n,n}(x,s)=(-1)^n q^{n(n+1)/2}
% does not depend on $x$.
%
%We now observe that \eqref{Q-rec} gives
%$Q_1(x;x,s,s)=0$, as $a+1/a-A_0-C_0=2x$ when $t=s$, and
%$Q_2(x;x,s,s)=0$, as $\bar C_1(a,b,c,d)=0$ when $t=s$. So
%$Q_n(x;x,s,s)=0$ for $n\geq1$, and \eqref{Qbp} implies
%$$\sum_{k=0}^n b_{n,k}(x,s)p_k(x;s)=0.$$
%Subtracting this identity from \eqref{Qbp} we get \eqref{Connection}.

We also need the following generalization of the projection formula
\cite{NassrallahRahman85}.
\begin{proposition} \label{L36} Suppose that $A,B,C,D$ satisfy the
assumptions in
Theorem \ref{Thm-QH-q}. For $x\in U_s$,
%
%e3.16 ###
%
\begin{equation}\label{Projection}
\int_\mathbb{R}p_n(y;t)P_{s,t}(x,dy)=p_n(x;s) .
\end{equation}
\end{proposition}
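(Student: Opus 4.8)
The plan is to prove \eqref{Projection} by exploiting the connection formula \eqref{Connection} together with orthogonality of the $Q_n$ with respect to $P_{s,t}(x,dy)$. First I would record the two structural facts we already have: polynomials $\{p_n(\cdot;t)\}$ are orthogonal with respect to $\pi_t(dy)$, and polynomials $\{Q_n(\cdot;x,t,s)\}$ are orthogonal with respect to $P_{s,t}(x,dy)$ for each fixed admissible $x\in U_s$ — this is exactly the content of Proposition \ref{Positivity} and the remark following the recurrence \eqref{Q-rec}, since $P_{s,t}(x,dy)=\nu(dy;a,b,c,d)$ with the stated parameters. In particular, for $n\ge 1$,
\begin{equation*}
 \int_\RR Q_n(y;x,t,s)\,P_{s,t}(x,dy)=0,
\end{equation*}
because $Q_0\equiv 1$ and orthogonality forces every higher $Q_n$ to integrate to zero against its own orthogonality measure.

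Next I would substitute the connection formula \eqref{Connection} into this vanishing integral. Since $b_{n,k}(x,s)$ and $p_k(x;s)$ do not depend on $y$, integrating term by term gives
\begin{equation*}
 0=\sum_{k=1}^n b_{n,k}(x,s)\left(\int_\RR p_k(y;t)\,P_{s,t}(x,dy)-p_k(x;s)\right)
\end{equation*}
for every $n\ge 1$. Writing $e_k(x):=\int_\RR p_k(y;t)P_{s,t}(x,dy)-p_k(x;s)$, this says $\sum_{k=1}^n b_{n,k}(x,s)e_k(x)=0$ for all $n\ge 1$. The matrix $(b_{n,k})_{1\le k\le n}$ is lower triangular with nonzero diagonal entries $b_{n,n}(x,s)\ne 0$, so by induction on $n$ we get $e_k(x)=0$ for all $k\ge 1$; the case $k=0$ is trivial since $p_0\equiv 1$ and $P_{s,t}(x,\cdot)$ is a probability measure. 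This is precisely \eqref{Projection}.

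Two points need care, and these are where the real work lies. First, one must justify that the integrals $\int_\RR p_k(y;t)P_{s,t}(x,dy)$ are finite and that term-by-term integration is legitimate: this follows because $P_{s,t}(x,dy)$ is compactly supported (being an Askey-Wilson orthogonality measure with bounded Jacobi parameters, by \cite[Theorem 2.5.4]{Ismail-05}), so all polynomials are integrable and the finite sum \eqref{Connection} poses no convergence issue. Second — and this is the main obstacle — one must check that the connection formula \eqref{Connection} and the orthogonality of $\{Q_n\}$ genuinely hold for every $x\in U_s$, including the atoms listed in Lemma \ref{L-atomy}, where the parameters $c,d$ of $Q_n$ may lie outside the unit disk and the measure $P_{s,t}(x,dy)$ may be purely discrete or of mixed type. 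The connection coefficients were already shown to be $t$-independent and well-defined by the explicit ${}_4\varphi_3$ expression, and Proposition \ref{Positivity} guarantees $P_{s,t}(x,dy)$ exists as a genuine orthogonality measure for exactly those $x$; so the argument above applies uniformly. I would therefore structure the proof as: (i) invoke Proposition \ref{Positivity} and compact support; (ii) note $\int Q_n\,dP_{s,t}(x,\cdot)=0$ for $n\ge1$; (iii) expand via \eqref{Connection}; (iv) run the triangular induction using $b_{n,n}\ne0$.
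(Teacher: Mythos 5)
Your proposal is correct and follows essentially the same route as the paper: both arguments rest on the vanishing of $\int_\RR Q_n(y;x,t,s)\,P_{s,t}(x,dy)$ for $n\geq 1$, the connection formula \eqref{Connection}, and the nonvanishing of $b_{n,n}(x,s)$, with your triangular-system induction being just a repackaging of the paper's induction on $n$. The additional care you flag about atoms and integrability is handled in the paper exactly as you suggest, by invoking Proposition \ref{Positivity} and compact support of the Askey--Wilson orthogonality measures.
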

\begin{pf} Since $x\in U_s$, from Proposition \ref{Positivity},
measures $P_{s,t}(x,dy)$ are well defined.
The formula holds true for $n=0$. Suppose it holds true for some
$n\geq0$.
By induction assumption and orthogonality of polynomials $\{Q_n\}$,
\begin{eqnarray*}
0 &=& \int_\mathbb{R}Q_{n+1}(y;x,t,s) P_{s,t}(x,dy)\\
&=& b_{n+1,n+1}(x,s)\int_\mathbb{R}\bigl(p_{n+1}(y;t)-p_{n+1}(x;s) \bigr)
P_{s,t}(x,dy)\\
&=& b_{n+1,n+1}(x,s) \biggl(\int_\mathbb{R}p_{n+1}(y;t) P_{s,t}(x,dy)
-p_{n+1}(x;s) \biggr) .
\end{eqnarray*}
\upqed\end{pf}
\begin{pf*}{Proof of Proposition \protect\ref{mf-gen}}
This proof follows the scheme of the proof of~\cite{BrycMatysiakWesolowski04b},
Proposition 2.5.
To prove (\ref{marg}), let $\mu(V)
=\int_\mathbb{R}P_{s,t}(x,V)\pi_s(dx)$, and note that by orthogonality,
$\int_\mathbb{R}p_n(x;s)\pi_s(dx)=0$ for all $n\geq1$.
Then
from (\ref{Projection}),
\begin{eqnarray*}
\int_\mathbb{R}p_n(y;t)\mu(dy) &=& \int_\mathbb{R}\biggl(\int_{\mathbb{R}} p_n(y;t)
P_{s,t}(x,dy) \biggr)\pi_s(dx)\\
&=& \int_\mathbb{R}p_n(x;s)\pi_s(dx)=0.
\end{eqnarray*}
Since $\int_\mathbb{R}p_n(y;t)\pi_t(dy)=0$, this shows that
all moments
of $\mu(dy)$ and $\pi_t(dy)$ are the same.
By the uniqueness of the moment problem for compactly supported
measures, $\mu(dy)=\pi_t(dy)$,
as claimed.

To prove (\ref{cond}), we first note that for $x\in U_s$,
$P_{s,t}(x,U_t)=1$; this can be seen by analyzing the locations of
atoms, which arise either from the values of $A\sqrt{t}$ or $B\sqrt
{t}>1$ or from $x$ being one of the atoms of
$U_s$. [Alternatively, use (\ref{marg}).]

Fix $x\in U_s$ and let $\mu(V)= \int_{U_t} P_{t,u}(y,V) P_{s,t}(x,dy)$.
Then, by (\ref{Qbp}) for $n\geq1$ and (\ref{Projection}) used twice,
\begin{eqnarray*}
&&\int_\mathbb{R}Q_n(z;x,u,s)\mu(dz) \\
&&\qquad=\int_{U_t}\int_\mathbb
{R}\sum_{k=1}^n
b_{n,k}(x,s) \bigl(p_k(z;u)-p_k(x;s) \bigr) P_{t,u}(y,dz)P_{s,t}(x,dy)\\
&&\qquad=\int_{U_t}\sum_{k=1}^n b_{n,k}(x,s) \bigl(p_k(y;t)-p_k(x;s) \bigr)
P_{s,t}(x,dy)\\
&&\qquad=\int_{\mathbb{R}}\sum_{k=1}^n b_{n,k}(x,s) \bigl(p_k(y;t)-p_k(x;s) \bigr)
P_{s,t}(x,dy)=0 .
\end{eqnarray*}
Thus the moments of $\mu$ and $P_{s,u}(x,dz)$ are equal which, by the
method of moments,
ends the proof.
\end{pf*}

%s3.3 ###
\subsection[Proofs of Theorems 1.1 and 1.2]{Proofs of Theorems \protect\ref{Thm-QH-q} and
\protect\ref{T3}}

\mbox{}

\begin{pf*}{Proof of Theorem \protect\ref{T3}}
If $A, B, C, D$ satisfy the assumptions in
Theorem~\ref{Thm-QH-q}, by Proposition \ref{mf-gen}, there exists a
Markov process $(Y_t)$ with orthogonal
polynomials $\{p_n(x;t)\}$. From (\ref{Projection}) we see that $\{
p_n(x;t)\}$ are
also martingale polynomials for $(Y_t)$. With $Z_t$ defined by
(\ref{Z}), polynomials $r_n(x;t)$ inherit the martingale
property as $r_n(Z_t;t)=p_n(Y_t;t)$.
%
%The proof under the assumptions of
%Proposition \ref{Thm-QH-small} consists of establishing the discrete
%version of the projection formula
% \cite{NassrallahRahman85}. Details are omitted. \fbox{Do we want to
%do this?}
\end{pf*}
\begin{pf*}{Proof of Theorem \protect\ref{Thm-QH-q}}
The fact that time interval $J$ is well defined and nondegenerate has
been shown in Remark \ref{R1.2}. From Proposition \ref{mf-gen}, we
already have the Markov process $(Y_t)$ with Askey--Wilson transition
probabilities. Thus the mean and covariance are (\ref{ordmom1}) and
(\ref{covar}).
Formulas (\ref{Z}) and (\ref{Def:X}) will therefore give us the
process $(X_t)_{t\in J}$ with the correct covariance.

It remains to verify asserted properties of conditional moments. Again
transformation (\ref{Def:X}) will imply (\ref{EQ:q-Var}), provided
$(Z_t)$ satisfies (\ref{Z:q-Var}).
For the proof of the latter we use orthogonal martingale polynomials
(\ref{r_n}). Our proof is closely related to \cite{BrycMatysiakWesolowski04},
Theorem 2.3.
We begin by writing the three step recurrence as
\[
x r_n(x;t)=(\alpha_n t+\beta_n) r_{n+1}(x;t)+(\gamma_n t+\delta_n)
r_{n}(x;t)+(\varepsilon_n t+\varphi_n) r_{n-1}(x;t),
\]
which amounts to decomposing the Jacobi matrix $\mathbf{J}_t$ of $\{
r_n(x;t)\}$ as $t\mathbf{x}+\mathbf{y}$.
%From \eqref{AW}, we read out the coefficients:
From (\ref{AW}) with
$a=A\sqrt{t}$, $b=B\sqrt{t}$, $c=C/\sqrt{t}$ and $d=D/\sqrt{t}$, we read
out the coefficients:
\begin{eqnarray*}
\alpha_n&=&-AB q^n \beta_n ,\\
\beta_n&=&\frac{1-A B C D q^{n-1} }
{\sqrt{1-q} (1-A B C D q^{2 n} ) (1-A B
C D q^{2 n-1} )} ,\\
\varepsilon_n&=&\frac{(1-q^n)
(1-A C
q^{n-1} ) (1-AD
q^{n-1} ) (1-B C
q^{n-1} ) (1-B D q^{n-1} )
}{\sqrt{1-q} (1-A B C D q^{2
n-2} ) (1-A B C D q^{2 n-1} )} ,\\
\varphi_n&=&-CD q^{n-1}\varepsilon_n ,\\
\gamma_n&=& \frac{A}{\sqrt{1-q}}-\frac{\alpha_n}{A} (1-AC q^n)(1-ADq^n)-
\frac{A\varepsilon_n}{(1-AC q^{n-1})(1-ADq^{n-1})}
,\\
\delta_n&=&\frac{1}{A\sqrt{1-q}}-\frac{\beta_n}{A} (1-AC q^n)(1-ADq^n)-
\frac{A\varphi_n}{(1-AC q^{n-1})(1-ADq^{n-1})}
.
\end{eqnarray*}
We note that the expressions for $\gamma_n,\delta_n$ after
simplification\footnote{$
\gamma_n=q^n\frac{ AB (q+1) ((A+B) CD+(C+D) q) q^n-A B C D
(A B (C+D)+(A+B) q) q^{2 n}-(A B (C+D)+(A+B) q)
q}{\sqrt{1-q} (q^2-A B C D q^{2 n} )
(A B C D q^{2 n}-1 )}.
$} are well defined also for $A=0$.
Moreover, by continuity
$\alpha_0,\beta_0,\gamma_0,\delta_0,\varepsilon_0=0,\varphi_0=0$
are defined also at $q=0$.

A calculation verifies the $q$-commutation equation
$[\mathbf{x},\mathbf{y}]_q=\mathbf{I}$ for the two components of the
Jacobi matrix. In terms of the coefficients this amounts to
verification that the expressions above satisfy for $n\geq1$:
%
%e3.21 ###
%e3.20 ###
%e3.19 ###
%e3.18 ###
%e3.17 ###
%
\begin{eqnarray}
\alpha_{n} \beta_{n-1}&=&q \alpha_{n-1} \beta_{n} ,\\
\beta
_{n} \gamma_{n+1}+\alpha_{n} \delta_{n}&=&q (\beta_{n}
\gamma_{n}+\alpha_{n} \delta_{n+1}) ,\\
\gamma_{n} \delta
_{n}+\beta_{n} \varepsilon_{n+1}+\alpha_{n-1} \varphi_{n}&=&q
(\gamma_{n} \delta_{n}+\beta_{n-1} \varepsilon
_{n}+\alpha_{n} \varphi_{n+1})+1 ,\\\delta_{n} \varepsilon
_{n}+\gamma_{n-1} \varphi_{n}&=&q (\delta_{n-1} \varepsilon
_{n}+\gamma_{n} \varphi_{n}) ,\\\varepsilon_{n} \varphi_{n+1}&=&q
\varepsilon_{n+1} \varphi_{n} .
\end{eqnarray}
For a similar calculation see \cite{UchiyamaSasamotoWadati04},
Section 4.2. A more general $q$-commutation equation
$[\mathbf{x},\mathbf{y}]_q=\mathbf{I}+\theta\mathbf{x}+\eta
\mathbf{y}+\tau\mathbf{x}^2+\sigma\mathbf{y}^2$ appears in
\cite{BrycMatysiakWesolowski04}, (2.22)--(2.26).

For compactly supported measures, conditional moments can be now read
out from the properties of the Jacobi matrices; formula (\ref{EQ:LR})
follows from $\mathbf{J}_t=t\mathbf{x}+\mathbf{y}$, and formula
(\ref{EQ:q-Var}) follows from
$[\mathbf{x},\mathbf{y}]_q=\mathbf{I}$.
This can be seen from the proof of \cite{BrycMatysiakWesolowski04},
Lemma 3.4, but for reader's convenience we
include some details.

Denote by $\mathbf{r}(x;t)=[r_0(x;t),r_1(x;t),\ldots]$. Then the
three-step recurrence is
$x \mathbf{r}(x;t)=\mathbf{r}(x;t)\mathbf{J}_t $, and the martingale
polynomial property from Theorem \ref{T3} says that
${\mathbb E}(\mathbf{r}(X_u;u)|\mathcal{F}_t)=\mathbf{r}(X_t;t)$.
(Here we
take all operations componentwise.)

To verify (\ref{EQ:LR}) for compactly supported measures it suffices
to verify that
\[
{\mathbb E}(X_t \mathbf{r}(X_u;u)|\mathcal{F}_s)=\frac
{u-t}{u-s}{\mathbb E}(X_s
\mathbf{r}(X_u;u)|\mathcal{F}_s)+\frac{t-s}{u-s}{\mathbb E}(X_u
\mathbf
{r}(X_u;u)|\mathcal{F}_s).
\]
Using martingale property and the three-step recurrence, this is
equivalent to
\[
\mathbf{r}(X_s;s)\mathbf{J}_t =\mathbf{r}(X_s;s)\biggl(\frac
{u-t}{u-s}\mathbf{J}_s+\frac{t-s}{u-s}\mathbf{J}_u\biggr),
\]
which holds true as
\[
\mathbf{J}_t =\frac{u-t}{u-s}\mathbf{J}_s+\frac{t-s}{u-s}\mathbf{J}_u
\]
for linear expressions in $t$.

To verify (\ref{Z:q-Var}) we write it as
%
%e3.22 ###
%
\begin{eqnarray}
{\mathbb E}(Z_t^2|\mathcal{F}_{s,u})
&=&
\frac{(u-t) (u-q t) Z_s^2}{(u-s) (u-q s)}
+\frac{(q+1) (t-s) (u-t) {Z_u}
{Z_s}}{(u-s) (u-q s)}\nonumber\\[-8pt]\\[-8pt]
&&{} + \frac{(t-s) (t-q s)
Z_u^2}{(u-s) (u-q s)}+\frac{(t-s) (u-t)}{u-q s}.\nonumber
\end{eqnarray}
For compactly supported laws, it suffices therefore to verify that
%
%e3.23 ###
%
\begin{eqnarray}
&&{\mathbb E}(Z_t^2\mathbf{r}(X_u;u)|\mathcal{F}_{s})
\nonumber\\
&&\qquad=
\frac{(u-t) (u-q t)
}{(u-s) (u-q s)}{\mathbb E}(Z_s^2\mathbf{r}(X_u;u)|\mathcal
{F}_{s})\nonumber\\[-8pt]\\[-8pt]
&&\qquad\quad{} + \frac
{(q+1) (t-s) (u-t) }{(u-s) (u-q s)}
{\mathbb E}({Z_u}
{Z_s}\mathbf{r}(X_u;u)|\mathcal{F}_{s})\nonumber\\
&&\qquad\quad{} + \frac{(t-s) (t-q s)
}{(u-s) (u-q s)}+ \frac{(t-s) (u-t)}{u-q s} {\mathbb E}(Z_u^2\mathbf
{r}(X_u;u)|\mathcal{F}_{s}).\nonumber
\end{eqnarray}

Again, we can write this using the Jacobi matrices and martingale
property as
%
%e3.24 ###
%
\begin{eqnarray}\label{QJ}
\mathbf{r}(X_s;s)\mathbf{J}_t^2 &=& \mathbf{r}(X_s;s)
\biggl(\frac{(u-t) (u-qt) }{(u-s)
(u-q s)}\mathbf{J}_s^2\hspace*{-28pt}\nonumber\\
&&\hspace*{41pt}{}
+ \frac{(q+1) (t-s) (u-t) }{(u-s) (u-q s)}
\mathbf{J}_s\mathbf{J}_u+\frac{(t-s) (t-q s)
}{(u-s) (u-q s)}\mathbf{J}_u^2 \biggr)\hspace*{-28pt}\\
&&{}
+ \frac{(t-s) (u-t)}{u-q s}\mathbf{r}(X_s;s).\hspace*{-28pt}\nonumber
\end{eqnarray}

A calculation shows that for $\mathbf{J}_t=t\mathbf{x}+\mathbf{y}$,
the $q$-commutation equation $[\mathbf{x},\mathbf{y}]_q=\mathbf{I}$
is equivalent to
%
%e3.25 ###
%
\begin{eqnarray}\label{J}
\mathbf{J}_t^2
&=& \frac{(u-t) (u-q t) }{(u-s) (u-q
s)}\mathbf{J}_s^2+\frac{(q+1) (t-s) (u-t) }{(u-s) (u-q s)}
\mathbf{J}_s\mathbf{J}_u\nonumber\\[-8pt]\\[-8pt]
&&{} + \frac{(t-s) (t-q s)
}{(u-s) (u-q s)}\mathbf{J}_u^2+\frac{(t-s) (u-t)}{u-q
s}\mathbf{I},\nonumber
\end{eqnarray}
so (\ref{QJ}) holds.

Uniqueness of $(X_t)$ follows from the fact that by
\cite{BrycMatysiakWesolowski04}, Theorem 4.1, each such process has orthogonal
martingale polynomials; from martingale property (\ref{Projection})
all joint moments are determined uniquely and correspond to
finite-dimensional distributions with compactly supported marginals.
%(cf. \cite[Lemma 3.4]{BrycMatysiakWesolowski04})
\end{pf*}

We remark that the following version of Propositions \ref{pfcond}
and \ref{pfcond-d} would shorten the proof of Theorem
\ref{Thm-QH-q}.
\begin{conjecture}
\label{pfcond-gen}
Let $(Y_t)_{t\in I}$ be the Markov process from Proposition \ref
{mf-gen}. Then for
any $s<t<u$ from $I(A,B,C,D)$, the conditional
distribution of $Y_t$ given $\mathcal{F}_{s,u}$
is
%
%e3.26 ###
%
\begin{equation}\label{pfconddis-gen}
\nu\biggl(y; \frac{z\sqrt{t}}{\sqrt{u}} ,\frac{\sqrt{t}}{z\sqrt
{u}},\frac{x\sqrt{s}}{\sqrt{t}} ,
\frac{\sqrt{s}}{x\sqrt{t}} \biggr) .
\end{equation}
(Here, $x=Y_s+\sqrt{Y_s^2-1}$, $ z=Y_u+\sqrt{Y_u^2-1}$.)
\end{conjecture}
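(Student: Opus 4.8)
The plan is to follow the proof of Proposition \ref{pfcond}, but --- since the process of Proposition \ref{mf-gen} may carry atoms in its one-dimensional and transition laws --- to replace the manipulation of densities by a moment computation resting on the projection formula \eqref{Projection}, just as Proposition \ref{mf-gen} replaces Proposition \ref{mf}. Fix $s<t<u$ in $I$. By the Markov property the regular conditional distribution $\rho$ of $Y_t$ given $\mathcal{F}_{s,u}$ exists, is a compactly supported probability measure, and depends on $(Y_s,Y_u)$ alone; abbreviate $x=Y_s+\sqrt{Y_s^2-1}$, $z=Y_u+\sqrt{Y_u^2-1}$, put $a'=z\sqrt{t/u}$, $b'=\sqrt{t/u}/z$, $c'=x\sqrt{s/t}$, $d'=\sqrt{s/t}/x$, and let $\widetilde w_n(\,\cdot\,;a,b,c,d)$ be the Askey--Wilson polynomials rescaled by Sears' ${}_4\varphi_3$-transformation so as to be symmetric in $a,b,c,d$ and polynomial in all their arguments, so that $\widetilde w_n(y;a',b',c',d')$ is a polynomial jointly in $y$, $Y_s$ and $Y_u$ with $y$-leading coefficient a function of $s/u$, $q$, $n$ only. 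Since for fixed $z$ these polynomials form a triangular basis in $y$, it suffices to prove
\[
\int_{\RR}\widetilde w_n(y;a',b',c',d')\,\rho(dy)=0\qquad\text{for every }n\ge 1;
\]
indeed this forces the moments of $\rho$ to be exactly those determined recursively by \eqref{AW} with parameters $a',b',c',d'$, and since $\rho$ is a compactly supported probability measure $\rho$ then coincides with $\nu(\,\cdot\,;a',b',c',d')$ (whose existence comes out along the way), after which \eqref{pfcondexp}--\eqref{pfcondvar} follow from Proposition \ref{awmom}.

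By the Markov property and the tower property, the displayed vanishing is equivalent to
\[
\int_{U_t}\!\int_{U_u}\widetilde w_n\!\left(y;\frac{z\sqrt t}{\sqrt u},\frac{\sqrt t}{z\sqrt u},\frac{x\sqrt s}{\sqrt t},\frac{\sqrt s}{x\sqrt t}\right)p_m(z;u)\,P_{s,t}(x,dy)\,P_{t,u}(y,dz)=0
\]
for all $n\ge 1$ and all $m\ge 0$, because $\{p_m(\,\cdot\,;u)\}$ spans the polynomials in $Y_u$. I would evaluate this by invoking the projection formula \eqref{Projection} twice. First, the inner integral over $z$: expand $\widetilde w_n(y;a',b',c',d')$, viewed as a polynomial in $Y_u$, into the $\pi_u$-orthogonal polynomials $p_l(\,\cdot\,;u)$, linearize the products $p_l(\,\cdot\,;u)p_m(\,\cdot\,;u)$, and integrate termwise against $P_{t,u}(y,dz)$ via $\int p_l(z;u)P_{t,u}(y,dz)=p_l(y;t)$; this produces a polynomial in $y$. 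Second, integrate that polynomial against $P_{s,t}(x,dy)$ via $\int p_j(y;t)P_{s,t}(x,dy)=p_j(x;s)$, where the connection coefficients should be organised --- exactly as \eqref{Connection} is derived from $Q_n(x;x,s,s)=0$ --- so that the outcome telescopes to $0$ for $n\ge1$ (for $n=0$ it reduces on both sides to $p_m(x;s)$, which merely re-expresses the Chapman--Kolmogorov relation \eqref{cond}). This is the same engine that drives Propositions \ref{L36} and \ref{mf-gen}, run twice in succession; for $\max\{|A|,|B|,|C|,|D|\}<1$ it collapses to the proof of Proposition \ref{pfcond}, where the single density identity \eqref{inteq} (Proposition \ref{techlem}) performs the whole cancellation at once.

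The hard part will be making this run through the entire parameter range of Theorem \ref{Thm-QH-q}. When $\max\{|A|,|B|,|C|,|D|\}\ge 1$, or $q<0$ with atoms present, the Askey--Wilson laws and the ${}_4\varphi_3$ series involved are truncated or finitely supported, so one must verify that the series terminate, that the relevant leading connection coefficients do not vanish, that no spurious poles appear, and that the atoms of $P_{t,u}(y,dz)$ and of $\nu(\,\cdot\,;a',b',c',d')$ sit in consistent places --- precisely the bookkeeping catalogued in Lemmas \ref{L-AW-criter} and \ref{L-atomy}, further multiplied by the up to nine endpoint configurations of the interval $I$. This is why Proposition \ref{pfcond} is short whereas the statement above is offered only as a conjecture: its proof would be a case analysis parallel to, but considerably longer than, that of Proposition \ref{mf-gen}.
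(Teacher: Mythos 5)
This statement is not proved in the paper: the authors state it explicitly as a conjecture, proving only the two extreme special cases --- the absolutely continuous case (Proposition \ref{pfcond}, via the ratio of densities and the product form of \eqref{AWdistr}) and the purely discrete case (Proposition \ref{pfcond-d}, via the ratio of the mass functions \eqref{pmf}). Your overall architecture is sound and is arguably the ``right'' uniform approach: reducing the identification of the bridge law to the vanishing of $\int \widetilde w_n(y;a',b',c',d')\,\rho(dy)$ for $n\ge1$ does determine all moments of $\rho$ from the three-term recurrence, positivity of $\rho$ then forces the Favard condition for the parameters $(a',b',c',d')$, and compact support gives uniqueness; and the symmetric (monic) recurrence coefficients are indeed polynomial in $Y_s,Y_u$ because the elementary symmetric functions of $a',b',c',d'$ are, with denominators depending only on $a'b'c'd'=s/u$.

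The genuine gap is that the one step carrying all the mathematical content --- the vanishing of
$\int\!\!\int \widetilde w_n(y;a',b',c',d')\,p_m(z;u)\,P_{t,u}(y,dz)\,P_{s,t}(x,dy)$
--- is asserted to ``telescope to $0$'' but is never established. The single-sided projection formula \eqref{Projection} rests on a concrete connection-coefficient identity, \eqref{Connection}, obtained from Askey--Wilson's formula (6.3) together with $Q_n(x;x,s,s)=0$. Your double application of \eqref{Projection} does not by itself produce the analogous two-sided identity: after the inner integration over $z$ the coefficients of the resulting polynomial in $y$ depend on $y$ (through the $y$-dependence of $\widetilde w_n$ and of the expansion of its $z$-dependence in the $p_l(\cdot;u)$), so the outer integration does not reduce to termwise use of \eqref{Projection} without a new linearization/connection formula relating $\widetilde w_n(y;a',b',c',d')$ to products $p_j(y;t)p_l(z;u)$ minus their values at $(x,z)$. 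No such formula is stated or proved, and in the regime with atoms it cannot be borrowed from \cite{Nassrallah-Rahman85}. So what you have is a correct reduction of the conjecture to an unproved family of $q$-hypergeometric identities plus the case analysis you describe --- a plausible programme, and one you honestly flag as incomplete, but not a proof.
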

%
%From \eqref{pfconddis-gen}, the first two conditional moments are:
%F}_{s,u})=\frac{(u-t)\sqrt{s}Y_s+(t-s)\sqrt{u}Y_u}{\sqrt{t}(u-s)} ,
%F}_{s,u})=\frac{(1-q)(u-t)(t-s)}{t(u-qs)} (\frac{1}{4}
%-\frac{(u\sqrt{s}Y_s-s\sqrt{u}Y_u)(\sqrt{u}Y_u-\sqrt{s}Y_s)}{(u-s)^2}
%) .

%%%%%%%%%%%%%%
%% discrete case begin

%s4 ###
\section{Purely discrete case}\label{S:discrete}
Assumption (\ref{ABCD}) arises from the positivity condition (\ref
{Farvard_condition}) for the
Askey--Wilson recurrence for which it is difficult to give general
explicit conditions. The following result exhibits additional
quadratic harnesses when condition (\ref{ABCD}) is not satisfied.
\begin{proposition}\label{T2}
Suppose $q, A,B,C,D>0$ and $ABCD<1$. Suppose that there are
exactly two numbers among the four products $AC$, $AD$, $BC$, $BD$ that
are larger than one, and that the smaller
of the two, say $AD$, is of the form $1/q^N$ for some integer
$N\geq0$.
If $Aq^N>1$, then there exists a Markov process $(X_t)_{t\in(0,\infty
)}$ with discrete univariate distributions
supported on $N+1$ points such that (\ref{EQ:cov})
(\ref{EQ:LR}) hold,
and (\ref{EQ:q-Var}) holds with parameters $\eta,\theta,\sigma
,\tau,\gamma$ given by (\ref{eta}) through (\ref{gamma}).
\end{proposition}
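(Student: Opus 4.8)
The plan is to repeat, in the present purely atomic regime, the construction carried out in Section~\ref{S: general}. Set $I=(CD,1/(AB))$; this interval is nonempty because $ABCD<1$, and it is the interval \eqref{Imax} evaluated in the case $q,A,B,C,D>0$. For $t\in I$ consider the candidate univariate laws
\[
\pi_t(dy)=\nu\left(dy;A\sqrt t,B\sqrt t,\frac{C}{\sqrt t},\frac{D}{\sqrt t}\right).
\]
The six pairwise products of these parameters are $ABt,\ AC,\ AD,\ BC,\ BD,\ CD/t$. By the choice of $I$ we have $ABt<1$ and $CD/t<1$; by hypothesis exactly two of the remaining four products lie in $[1,\infty)$, the smaller being $AD=q^{-N}$. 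Hence $m_1=2$ in the notation of Lemma~\ref{L-AW-criter}, and since $q\ge 0$ and the smaller of the two large products is $1/q^{N}$, part (iii) of that lemma shows that $\pi_t$ exists and is a purely discrete measure with exactly $N+1$ atoms, located among the points \eqref{xj}--\eqref{uj}; in particular its support $U_t$ is finite. The extra hypothesis $Aq^{N}>1$ is what will make the positivity argument below go through.

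Next I would introduce, exactly as in Proposition~\ref{Positivity}, the transition kernels
\[
P_{s,t}(x,dy)=\nu\left(dy;A\sqrt t,B\sqrt t,\sqrt{\frac{s}{t}}\bigl(x+\sqrt{x^2-1}\bigr),\sqrt{\frac{s}{t}}\bigl(x-\sqrt{x^2-1}\bigr)\right)
\]
for $s<t$ in $I$ and $x\in U_s$, and show that each is a genuine probability measure supported in $U_t$. For $x\in[-1,1]$ the kernel is absolutely continuous with density \eqref{trans-d}; the real work is the atomic case, where one must check $\prod_{k=0}^{n}A_kC_{k+1}\ge 0$ for the corresponding Askey--Wilson recurrence. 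Here the parameters satisfy $cd=s/t<1$, $ac\cdot ad=A^2 s$ and $bc\cdot bd=B^2 s$, so the signs of the recurrence coefficients are governed by $A\sqrt s$, $B\sqrt s$ and the location of the atom $x$; running through the atom types of Lemma~\ref{L-atomy} and applying the appropriate parts of Lemma~\ref{L-AW-criter} (now parts (iii), (v), (vi), since \eqref{abcd} fails), with $Aq^{N}>1$ pinning down the index $N$, yields the claim. I expect this positivity verification to be the \emph{main obstacle}: as in the proof of Proposition~\ref{Positivity} it decomposes into several sub-cases, and it is exactly here that the hypotheses of the Proposition are used.

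Granting the kernels, the Chapman--Kolmogorov identities go through word for word as in Section~\ref{Sec: Construction of Markov process}: the connection-coefficient formula \eqref{Connection} relating the polynomials $Q_n$ (orthogonal for $P_{s,t}(x,\cdot)$) to $p_n(\,\cdot\,;t)=\bar w_n(\,\cdot\,;A\sqrt t,B\sqrt t,C/\sqrt t,D/\sqrt t)$ holds with the same $t$-independent coefficients $b_{n,k}(x,s)$, which gives the projection formula $\int p_n(y;t)\,P_{s,t}(x,dy)=p_n(x;s)$ as in Proposition~\ref{L36}, and then the method of moments yields \eqref{marg} and \eqref{cond} as in Proposition~\ref{mf-gen}. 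This produces a Markov process $(Y_t)_{t\in I}$ with finitely supported one-dimensional marginals.

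Finally I would pass to $(X_t)$ by the same rescaling and time change as before: put $Z_t=\frac{2\sqrt t}{\sqrt{1-q}}Y_t$ and define $X_t$ through \eqref{Def:X} with the M\"obius change of time $T(t)=(t+CD)/(1+ABt)$. Since $A,B,C,D,q>0$ one has $T_0=0$ and $T_1=\infty$ in \eqref{T0}--\eqref{T1}, so the time domain is $J=(0,\infty)$, and each $X_t$ has an $(N+1)$-point distribution. The asserted identities \eqref{EQ: cov}, \eqref{EQ: LR} and \eqref{EQ: q-Var} with parameters \eqref{eta}--\eqref{gamma} then follow exactly as in the proof of Theorem~\ref{Thm-QH-q}: write the Jacobi matrix of $\{r_n(\,\cdot\,;t)\}$ from \eqref{r_n} as $\mathbf J_t=t\mathbf x+\mathbf y$, check from the explicit recurrence coefficients that $[\mathbf x,\mathbf y]_q=\mathbf I$, and read off the first two conditional moments using the martingale property \eqref{Projection}. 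Because all measures involved are finitely supported, the relevant moment problems are trivially determinate, so no separate uniqueness or convergence argument is needed.
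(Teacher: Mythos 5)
Your plan---rerunning the Section~\ref{S: general} machinery in the purely atomic regime---has a genuine gap, and in fact two. First, the positivity of the kernels $P_{s,t}(x,dy)$ for $x$ in the $(N+1)$-point support is the heart of the proposition, and you explicitly defer it (``I expect this positivity verification to be the main obstacle''). The paper does not attack it by a sign analysis of $\prod A_kC_{k+1}$ at all: it writes the transition probabilities in closed form, $P_{s,t,y_k(s)}(y_j(t))=p_{j,k}\bigl(A\sqrt t,B\sqrt t,q^kAs/\sqrt t\bigr)$ with $p_{j,k}$ the explicit finite Askey--Wilson mass function \eqref{pmf}, so that nonnegativity is read off from the parameter inequalities listed before \eqref{pmf} (this is where $Aq^N>1$ enters), and then proves Chapman--Kolmogorov by the purely combinatorial identity \eqref{dck}, established by direct $q$-Pochhammer manipulation.

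Second, and more seriously, the orthogonal-polynomial apparatus of Section~\ref{S: general} does not transfer ``word for word'', because it is built under \eqref{ABCD}, which fails here by hypothesis ($AD,BD\in[1,\infty)$). With $ad=AD=q^{-N}$ the coefficient $A_N$ in \eqref{A_n} contains the vanishing factor $1-ad\,q^N$, so the normalized recurrence \eqref{AW} terminates and $\bar w_n$ is not defined for $n>N+1$; likewise the connection coefficient $b_{n,n}(x,s)=\frac{(q,q^{n-1}ABCD)_n}{(q,q^{n-1}ABs,AC,AD)_n}$ has $(AD)_n=(q^{-N})_n$ in its denominator, which vanishes for $n\ge N+1$. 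Consequently \eqref{Connection}, the projection formula \eqref{Projection}, and the ``all moments agree'' arguments of Propositions~\ref{L36} and~\ref{mf-gen} are available only for $n\le N$; the same degeneration affects your Jacobi-matrix derivation of \eqref{EQ: q-Var}, which needs the martingale identity for the full infinite vector $\mathbf{r}$. One could salvage this ($N$ orthogonality relations plus normalization determine a measure on a known set of $N+1$ points), but that requires an additional argument identifying the supports, which you do not supply. The paper avoids the issue entirely: it computes the conditional law of $Y_t$ given $\mathcal{F}_{s,u}$ in closed form (Proposition~\ref{pfcond-d}) and obtains the conditional moments from \eqref{1mom} and \eqref{awvar}, which hold for every Askey--Wilson law because the first two Askey--Wilson polynomials integrate to zero.
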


After re-labeling the parameters, without loss of generality for the
reminder of this section, we will assume that $0<A<B$, $0<C<D$, $AC<1$,
$BC<1$, $AD=1/q^N$, so that $BD>1/q^N$.

%{We remark that discrete Askey-Wilson distributions
%$\nu(dx;A\sqrt{t},B\sqrt{t},C/\sqrt{t},D/\sqrt{t})$ may be well
%defined also
%when $ABCD>1$, a case which we do not consider in this paper.
%}

%s4.1 ###
\subsection{Discrete Askey--Wilson distribution}
The discrete
Askey--Wilson distribution $\nu(dx;a,b,c,d)$
arises in several situations, including the case described in Lemma
\ref{L-AW-criter}(iii). This distribution
was studied in detail by Askey and Wilson \cite{AskeyWilson79}
and was summarized in \cite{AskeyWilson85}.

Here we consider parameters $a,b,c,d>0$ and $0<q<1$ such that
$ad=1/q^N$ and
\begin{eqnarray*}
q^Na&>&1,\qquad q^N/(bc)>1,\qquad ac<1,\\
q^N a/b&>&1,\qquad q^Na/c>1,\qquad q^N ab>1.
\end{eqnarray*}
Note that this implies $abcd<1$ and
\begin{eqnarray*}
ad &=& 1/q^N>1,\qquad ac<1,\qquad bc<1,\qquad bd<1,\qquad cd<1,\\
ab &>& 1/q^N,
\end{eqnarray*}
so from Lemma \ref{L-AW-criter}(iii), the Askey--Wilson law $\nu
(dx;a,b,c,d)=\nu(dx;a,b,c$, $1/(a q^N))$ is well defined and depends on
parameters $a,b,c, q, N$ only and is supported on $N+1$ points,
\[
\{x_k= (q^ka+q^{-k}a^{-1} )/2\dvtx k=0,\ldots,N\} .
\]
%
%According to \cite{AskeyWilson85}, i
According to \cite{AskeyWilson79}, the Askey--Wilson law assigns to
$x_k$ the probability $p_{k,N}(a,b,\break c)=p_k(a,b,c,1/(q^Na))$ [recall
(\ref{p_j})]. The formula simplifies to
%
%e4.1 ###
%
\begin{eqnarray}\label{pmf}\quad
%p_{k,N}(a,b,c)= [\begin{matrix}N
%)_{N-k}}
%{(q^ka^2)_{N+1}} \frac{(ab, ac)_k}{ (\frac{q}{bc} )_N}
% \frac{(1-q^{2k}a^2)q^{\frac{k(k+1)}{2}}}{(-bc)^k}, k=0,\ldots,N .
&&p_{k,N}(a,b,c)\nonumber\\
&&\qquad=\left[\matrix{N\cr k}\right]\frac{ ({q^{k+1}a}/{b},
{q^{k+1}a}/{c} )_{N-k}(ab, ac)_k(1-q^{2k}a^2)q^{{k(k+1)}/{2}}}
{(q^ka^2)_{N+1} ({q}/({bc}))_N (-bc)^k},\\
\eqntext{k=0,\ldots,N .}
\end{eqnarray}
Here $\left[{N}\atop{k}\right]=\frac{(q)_N}{(q)_k(q)_{N-k}}$
denotes the $q$-binomial coefficient.

We remark that if $X$ is a random variable distributed according to
$\nu(dx;a,\break b,c$, $1/(a q^N))$, then ${\mathbb E}(X)$ and $\operatorname
{Var}(X)$ are given by
formulas (\ref{1mom}) and (\ref{awvar}) with $d=1/(q^Na)$,
respectively. This can be seen by a discrete version of the
calculations from the proof of Proposition \ref{awmom};
alternatively, one can use the fact that the first two Askey--Wilson
polynomials, $\bar w_1(X)$ and $\bar w_2(X)$, integrate to zero.

The discrete version of Proposition \ref{techlem} says that with
$d=1/(ma q^N)$,
%
%e4.2 ###
%
\begin{eqnarray}\label{tech-gen}
&&\nu(U;a,b,cm, md)\nonumber\\
&&\qquad=\int\nu\bigl(U; a,b, m\bigl(x+\sqrt{x^2-1}\bigr),m\bigl(x-\sqrt
{x^2-1}\bigr)\bigr)\\
&&\hspace*{42.1pt}{}\times\nu(dx;ma,mb,c,d)\nonumber
\end{eqnarray}
and takes the following form. %\fbox{ $x_k+\sqrt{x_k^2-1}=?$ on the
%support of $\nu(dx;ma,mb,c,d)$}
%
\begin{lemma}
For any $m\in(0,1)$ and any $j=0,1,\ldots,N$,
%
%e4.3 ###
%
\begin{equation}\label{dck}
p_{j,N}(a,b,mc)=\sum_{k=j}^N p_{j,k}(a,b,q^km^2a)p_{k,N}(ma,mb,c) .
\end{equation}
\end{lemma}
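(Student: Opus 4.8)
The plan is to derive \eqref{dck} as the purely discrete instance of the general transition-law identity \eqref{tech-gen}, which in turn is the atomic counterpart of Proposition~\ref{techlem}. Concretely, I would start from \eqref{tech-gen} with $d=1/(m a q^N)$, so that on the left-hand side the fourth parameter of $\nu(\cdot;a,b,cm,md)$ is $md = 1/(a q^N)$, making that law the discrete Askey--Wilson law supported on the $N+1$ points $x_j=(q^j a+q^{-j}a^{-1})/2$ with masses $p_{j,N}(a,b,mc)$ by \eqref{pmf}. On the right-hand side, the outer integrating measure $\nu(dx;ma,mb,c,d)$ has $ad$-type product $(ma)d = 1/q^N$ as well, hence is discrete, supported on $\xi_k=(q^k(ma)+q^{-k}(ma)^{-1})/2$ with masses $p_{k,N}(ma,mb,c)$; and for each such atom $x=\xi_k$ one has $x+\sqrt{x^2-1}=q^k m a$ and $x-\sqrt{x^2-1}=1/(q^k m a)$, so the inner law $\nu(\cdot;a,b,m(x+\sqrt{x^2-1}),m(x-\sqrt{x^2-1}))=\nu(\cdot;a,b,q^k m^2 a,1/(q^k a))$ is the discrete law on $N_k+1 = k+1$ points (its $ad$-product being $a\cdot 1/(q^k a)=q^{-k}$), with masses $p_{j,k}(a,b,q^k m^2 a)$ for $j=0,\dots,k$. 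Reading off the mass at the point $x_j$ on both sides, and noting that the inner sum only runs over $k\geq j$, yields exactly \eqref{dck}.

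The first thing to settle carefully is that \eqref{tech-gen} itself holds in this discrete regime; the cleanest route is to observe that both sides are orthogonality measures of Askey--Wilson polynomials with the same parameters (after the substitution), and to invoke the uniqueness of the Askey--Wilson orthogonality measure under the standing assumptions (as recorded before Lemma~\ref{L-AW-criter} via \cite[Theorem 2.5.4]{Ismail-05}), together with the connection/projection machinery already developed: indeed \eqref{tech-gen} is the measure-level statement whose polynomial shadow is the projection formula \eqref{Projection} of Proposition~\ref{L36}. Alternatively, and perhaps more honestly for a self-contained lemma, one can verify \eqref{dck} directly: substitute the explicit formula \eqref{pmf} for each of the three probabilities, cancel the common $q$-binomial and $q$-power prefactors, and reduce the claimed identity to a terminating ${}_3\varphi_2$ or ${}_2\varphi_1$ summation (a $q$-Vandermonde / $q$-Chu--Vandermonde evaluation) in the summation variable $k$. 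This is the computational heart of the matter.

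The step I expect to be the main obstacle is precisely this last reduction: one must track the interplay of the three Pochhammer blocks $\left(\tfrac{q^{j+1}a}{b},\tfrac{q^{j+1}a}{c}\right)_{\bullet}$, $(ab,ac)_{\bullet}$, and the powers of $-bc$ and $q^{k(k+1)/2}$, and in particular verify that the $m$-dependence — which enters only through the parameters $q^k m^2 a$ and $mc$ — cancels out in the expected way so that the right-hand sum, a priori an $m$-dependent function, collapses to the $m$-free $p_{j,N}(a,b,mc)$ with $mc$ appearing only linearly where \eqref{pmf} predicts. Using the simplification rules \eqref{poch1} and \eqref{poch2} to rewrite $(q^k a^2)_{N+1}$ and the reversed blocks $(\cdot)_{N-k}$ should bring the summand into standard ${}_r\varphi_s$ shape; once there, matching it to a known terminating $q$-series identity finishes the proof. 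I would flag that the boundary behaviour at $k=j$ and $k=N$ (where some Pochhammer factors degenerate) needs a quick separate sanity check, but otherwise the argument is routine $q$-series manipulation or, if one prefers, an immediate corollary of \eqref{tech-gen} by reading off atom masses.
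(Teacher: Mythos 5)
Your second route (substitute \eqref{pmf} for all three probabilities, cancel prefactors using \eqref{poch1} and \eqref{poch2}, and reduce to a known terminating summation) is exactly the paper's proof in outline, but the step you defer as ``the computational heart'' is where your plan goes off target. After extracting the factor $p_{j,N}(a,b,mc)$ from the sum, the paper does not land on a $q$-Chu--Vandermonde ${}_2\varphi_1$ or ${}_3\varphi_2$: the residual summand retains the very-well-poised factor $(1-q^{2k}m^2a^2)$, and the identity that closes the computation is the normalization of another finite Askey--Wilson law. Concretely, with $K=k-j$, $L=N-j$, $\alpha=mq^ja$, $\beta=m/(q^ja)$, $\gamma=c$, the leftover sum in \eqref{ow} is $\sum_{K=0}^{L}p_{K,L}(\alpha,\beta,\gamma)=1$, i.e.\ a terminating very-well-poised summation (of ${}_6\varphi_5$ type), not $q$-Vandermonde. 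So the overall strategy is right, the named summation theorem is not, and the $m$-dependence you worry about is absorbed precisely into the shifted parameters $\alpha,\beta$ of this auxiliary law.

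Your first route has a more serious problem: it is circular as presented. The display \eqref{tech-gen} is not an independently established fact from which \eqref{dck} can be ``read off''; the paper introduces \eqref{tech-gen} as the measure-level restatement of the lemma, and \eqref{dck} \emph{is} its explicit form, so deriving one from the other proves nothing. To make that route honest you would have to prove \eqref{tech-gen} in the purely discrete regime, and the machinery you invoke for this is not available there: Proposition \ref{L36} and the projection formula \eqref{Projection} are stated under the hypotheses of Theorem \ref{Thm-QH-q}, in particular \eqref{ABCD}, which fails by design throughout Section \ref{S: discrete} (two of the products $AC,AD,BC,BD$ exceed $1$). Moreover ``both sides are orthogonality measures, invoke uniqueness'' does not apply as stated, since the right-hand side of \eqref{tech-gen} is a mixture of Askey--Wilson laws, not a priori an orthogonality measure; showing that the mixture annihilates the polynomials $\bar w_n(\cdot\,;a,b,cm,md)$ is exactly the projection formula one would first have to extend. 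The direct computation is therefore not an optional alternative here but the necessary argument, and it must be carried through to the very-well-poised normalization identity above.
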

\begin{pf}
Expanding the right-hand side of (\ref{dck}) we have
%$$
%)_{k-j}}
%{(q^ja^2)_{k+1}} \frac{(ab, q^km^2a^2)_j}{ (\frac{q}{q^km^2ab} )_k}
% \frac{(1-q^{2j}a^2)q^{\frac{j(j+1)}{2}}}{(-q^km^2ab)^j}
%$$
%$$
% [\begin{matrix}N
%)_{N-k}}
%{(q^km^2a^2)_{N+1}} \frac{(m^2ab, mac)_k}{ (\frac{q}{mbc} )_N}
% \frac{(1-q^{2k}m^2a^2)q^{\frac{k(k+1)}{2}}}{(-mbc)^k}
%$$
%= [\begin{matrix} N
%)_{N-j}(ab, mac)_j(1-a^2q^{2j})q^{\frac{j(j+1)}{2}}}
%{(q^ja^2)_{N+1} (\frac{q}{mbc} )_N(-mcb)^j}
%$$
%)_{N-k}(m^2)_{k-j}
%(mq^jac)_{k-j}}{(q^{k+j}m^2a^2)_{N-j+1} (\frac{q^{j+1}a}{mc} )_{N-j}}
% \frac{(1-q^{2k}m^2a^2)q^{\frac{(k-j)(k-j+1)}{2}}}
%{ (-\frac{mc}{q^ja} )^{k-j}}.
%$$
%WWW re-typeset
%
%e4.4 ###
%
\begin{eqnarray}\label{ow}\hspace*{12pt}
&&\sum_{k=j}^N \left[\matrix{ k
\cr j}\right]\frac{ ({q^{j+1}a}/{b}, {q}/({q^{k-j}m^2}) )_{k-j}}
{(q^ja^2)_{k+1}} \frac{(ab, q^km^2a^2)_j}{ ({q}/({q^km^2ab}) )_k}
\frac{(1-q^{2j}a^2)q^{{j(j+1)}/{2}}}{(-q^km^2ab)^j}
\nonumber\\
&&\quad{}\times\left[\matrix{N
\cr k}\right]\frac{ ({q^{k+1}a}/{b}, {q^{k+1}ma}/{c} )_{N-k}}
{(q^km^2a^2)_{N+1}} \frac{(m^2ab, mac)_k}{ ({q}/({mbc}) )_N}
\frac{(1-q^{2k}m^2a^2)q^{{k(k+1)}/{2}}}{(-mbc)^k}
\nonumber\\
&&\qquad=\left[\matrix{N
\cr j}\right]\frac{ ({q^{j+1}a}/{b}, {q^{j+1}a}/({mc})
)_{N-j}(ab, mac)_j(1-a^2q^{2j})q^{{j(j+1)}/{2}}}
{(q^ja^2)_{N+1} ({q}/({mbc}) )_N(-mbc)^j}
\nonumber\\
&&\qquad\quad{}\times\sum_{k=j}^N\left[\matrix{ N-j
\cr k-j}\right]\biggl( \biggl(\frac{q^{k+1}ma}{c},q^{k+1}a^2 \biggr)_{N-k}(m^2)_{k-j}\\
&&\hspace*{112.4pt}{}\times
(mq^jac)_{k-j}(1-q^{2k}m^2a^2)q^{
{(k-j)(k-j+1)}/{2}}\biggr)\nonumber\\
&&\hspace*{62pt}{}\times\biggl({(q^{k+j}m^2a^2)_{N-j+1} \biggl(\frac{q^{j+1}a}{mc}
\biggr)_{N-j} \biggl(-\frac{mc}{q^ja} \biggr)^{k-j}}\biggr)^{-1}.\nonumber
\end{eqnarray}

Here we used identities (\ref{poch1}) and (\ref{poch2}). The first
one for: (i) $\alpha=q^{j+1}a/b$, $M=k-j$ and $L=N-k$; (ii)
$\alpha=q^km^2a^2$, $M=j$, $L=N-j+1$; (iii) $\alpha=q^ja^2$,
$M=k+1$, $L=N-k$. The second one for: (i) $\alpha=m^2ab$ and
$M=k$; (ii) $\alpha=m^2$ and $M=k-j$.

We transform the sum in (\ref{ow}) introducing $K=k-j$, $L=N-j$,
$\alpha=mq^ja$, $\beta=\frac{m}{q^ja}$ and $\gamma=c$. Then by
(\ref{pmf}) we get
\[
\sum_{K=0}^L \left[\matrix{L
\cr K}\right]\frac{ ({q^{K+1}\alpha}/{\gamma},
{q^{K+1}\alpha}/{\beta} )_{L-K}(\alpha\beta)_{K}
(\alpha\gamma)_K}{(q^K\alpha^2)_{L+1} ({q}/({\beta\gamma}))_L}
\frac{(1-q^{2K}\alpha^2)q^{{K(K+1)}/{2}}}
{(-\beta\gamma)^K}=1 .
\]
Now the result follows since the first part of the expression at the
right-hand side of
(\ref{ow}) is the desired probability mass function.
\end{pf}

%s4.2 ###
\subsection{Markov processes with discrete Askey--Wilson laws}
%Let $A>B,C>0$ and $Aq^N>1$ for some fixed $N>0$.
We now choose the parameters as in Proposition \ref{T2}: $0<A<B$,
$0<C<D=1/(Aq^N)$, $ABCD<1$, $BC<1$,
and choose the time interval $I= (C(q^N A)^{-1},(AB)^{-1} )$
from (\ref{Imax}).
For any $t\in I$, define the discrete distribution
$\pi_t(dx)=\break\sum_{k=0}^N \pi_t(y_k(t)) \delta_{y_k(t)}(dx) $ by
choosing the support from (\ref{uj})
%$$
%y_k(s)= (q^kAs^{\frac{1}{2}}+(q^kA)^{-1}s^{-\frac{1}{2}} )/2 ,
%$$
with weights
%
%e4.5 ###
%
\begin{equation}
\label{*}\pi_t(y_k(t))=p_{k,N} (At^{{1}/{2}},Bt^{
{1}/{2}},Ct^{-{1}/{2}} ) .
\end{equation}

Clearly, the support of $\pi_s$ is $U_s=\{y_0(s),y_1(s),\ldots,y_N(s)\}$.

Also for any $s,t\in I$, $s<t$ and for any
$k\in\{0,1,\ldots,N\}$, define the discrete Askey--Wilson distribution
%$P_{s,t,y_k(s)}=(y_j(t),P_{s,t,y_k(s)}(y_j(t)))_{j=0,1,\ldots,k}$
$P_{s,t}(y_k(s),dy)=\sum_{j=0}^k P_{s,t,y_k(s)}\delta_{y_j(t)}(dy)$
by
%
%e4.6 ###
%
\begin{equation}
\label{**}
P_{s,t,y_k(s)}(y_j(t))=p_{j,k} (At^{{1}/{2}},
Bt^{{1}/{2}},q^kAst^{-{1}/{2}} ) .
\end{equation}

Thus $P_{s,t}(x,dy)$ is defined only for $x$ from the support of $\pi_s$.
Next, we give the discrete version of Proposition \ref{mf}.
\begin{proposition}
The family of distributions $(\pi_t,P_{s,t}(x,dy))$, $s,t\in I$,
$s<t$, $k\in\{0,1,\ldots,N\}$ defines a Markov process
$(Y_t)_{t\in I}$ with trajectories contained in the set of
functions $\{(y_k(t))_{t\in I}, k=0,1,\ldots,N\}$.
\end{proposition}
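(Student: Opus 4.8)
The plan is to repeat, in the discrete setting, the argument behind Propositions \ref{mf} and \ref{mf-gen}. It breaks into three parts: (a) check that \eqref{*} and \eqref{**} define genuine probability measures $\pi_t$ and $P_{s,t}(y_k(s),\cdot)$, carried respectively by $\{y_0(t),\dots,y_N(t)\}$ and $\{y_0(t),\dots,y_k(t)\}$; (b) verify the two consistency relations
\[
\pi_t(y_j(t))=\sum_{k=j}^{N} P_{s,t,y_k(s)}(y_j(t))\,\pi_s(y_k(s)),
\qquad
P_{s,u,y_k(s)}(y_j(u))=\sum_{l=j}^{k} P_{t,u,y_l(t)}(y_j(u))\,P_{s,t,y_k(s)}(y_l(t))
\]
for $s<t<u$ in $I$ and all admissible $j,k$; and (c) conclude, exactly as in the proof of Proposition \ref{mf}, that $(\pi_t,P_{s,t})$ are the univariate distributions and transition probabilities of a Markov process $(Y_t)_{t\in I}$.

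For (a) I would invoke Lemma \ref{L-AW-criter}(iii). After relabelling we have $0<A<B$, $0<C<D=1/(Aq^N)$, $ABCD<1$, $BC<1$ and $I=(CD,1/(AB))$. For the parameters $a=A\sqrt t$, $b=B\sqrt t$, $c=C/\sqrt t$, $d=D/\sqrt t$ of $\pi_t$ the six products $ab,ac,ad,bc,bd,cd$ are $ABt,\,AC,\,1/q^N,\,BC,\,BD,\,CD/t$; since $t\in I$ forces $ab=ABt<1$ and $cd=CD/t<1$, and $AC,BC<1$, the only products in $[1,\infty)$ are $ad=1/q^N$ and $bd=BD>1/q^N$, the smaller being $ad=1/q^N$, so Lemma \ref{L-AW-criter}(iii) applies and $\pi_t$ is the discrete Askey--Wilson law with $N+1$ atoms at $y_0(t),\dots,y_N(t)$. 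For the parameters $a=A\sqrt t$, $b=B\sqrt t$, $c=q^kAs/\sqrt t$, $d=1/(q^kA\sqrt t)$ of $P_{s,t}(y_k(s),\cdot)$ one has $ab=ABt<1$, $ac=q^kA^2s\le A^2s<1$ (as $s<1/(AB)<1/A^2$), $bc=q^kABs\le ABs<1$, $cd=s/t<1$, while $ad=1/q^k$ and $bd=(B/A)q^{-k}>1/q^k$ are $\ge 1$; again by Lemma \ref{L-AW-criter}(iii) this is a discrete Askey--Wilson law with $k+1$ atoms, located at $y_0(t),\dots,y_k(t)$ because the distinguished parameter is $a=A\sqrt t$. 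This positivity bookkeeping, done as in the proof of Proposition \ref{Positivity}, is the only delicate point and is the main obstacle --- though here, all parameters being positive, far fewer cases arise than in the general construction.

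For (b), both identities are instances of the identity \eqref{dck} already established in the previous lemma. The marginal relation follows from \eqref{dck} with $a=A\sqrt t$, $b=B\sqrt t$, $m=\sqrt{s/t}\in(0,1)$, $c=C/\sqrt s$, since then $mc=C/\sqrt t$, $q^km^2a=q^kAs/\sqrt t$, $ma=A\sqrt s$ and $mb=B\sqrt s$, which matches \eqref{*} and \eqref{**} term by term. The Chapman--Kolmogorov relation follows from \eqref{dck} with $N$ replaced by $k$ and with $a=A\sqrt u$, $b=B\sqrt u$, $m=\sqrt{t/u}\in(0,1)$, $c=q^kAs/\sqrt t$, since then $mc=q^kAs/\sqrt u$, $q^lm^2a=q^lAt/\sqrt u$, $ma=A\sqrt t$ and $mb=B\sqrt t$; the summation range $l\le k$ is automatic from the support of $P_{s,t}(y_k(s),\cdot)$. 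Once (a) and (b) are in hand, part (c) produces a Markov process $(Y_t)_{t\in I}$ whose state at every time $t$ lies in $\{y_0(t),\dots,y_N(t)\}$; equivalently, its trajectories are contained in the set of functions $\{(y_k(t))_{t\in I}:k=0,1,\dots,N\}$, which is the assertion.
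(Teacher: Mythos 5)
Your proof is correct and follows essentially the same route as the paper: both consistency identities (marginal and Chapman--Kolmogorov) are reduced to the discrete projection identity \eqref{dck} with exactly the substitutions the paper uses ($a=A\sqrt t$, $b=B\sqrt t$, $c=C/\sqrt s$, $m=\sqrt{s/t}$ for the first, and $a=A\sqrt u$, $b=B\sqrt u$, $c=q^kAs/\sqrt t$, $m=\sqrt{t/u}$ for the second). Your part (a) --- checking via Lemma \ref{L-AW-criter}(iii) that \eqref{*} and \eqref{**} are genuine probability measures with supports $\{y_0(t),\dots,y_N(t)\}$ and $\{y_0(t),\dots,y_k(t)\}$ respectively --- is carried out correctly; the paper leaves this implicit in the setup of Proposition \ref{T2}, so it is a harmless (indeed welcome) addition rather than a different approach.
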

\begin{pf}
We need to check the Chapman--Kolmogorov conditions. Note that for
any $s<t$ and any $k$ the support of the measure $P_{s,t,y_k(s)}$
is a subset of the support $U_t$ of the measures
$\pi_t$. First we check
%
%e4.7 ###
%
\begin{equation}\label{ck1}
\pi_t(y_j(t))=\sum_{k=j}^N P_{s,t,k}(y_j(t)) \pi_s(y_k(s)),
\end{equation}
which can be written as
\begin{eqnarray*}
&&p_{j,N} (At^{{1}/{2}},Bt^{{1}/{2}},Ct^{-{1}/{2}} )\\
&&\qquad=\sum_{k=j}^N
p_{j,k} (At^{{1}/{2}},Bt^{{1}/{2}},q^kAst^{-{1}/{2}} )
p_{k,N} (As^{{1}/{2}},Bs^{{1}/{2}},Cs^{-{1}/{2}} ) .
\end{eqnarray*}
Now (\ref{ck1}) follows from (\ref{dck}) with
$a=At^{{1}/{2}}$, $b=Bt^{{1}/{2}}$, $c=Cs^{-{1}/{2}}$
and $m=(s/t)^{{1}/{2}}$.

Similarly, the condition
%
%e4.8 ###
%
\begin{equation}\label{ck2}
P_{s,u,u_k(s)}(u_i(u))=\sum_{j=k}^i P_{t,u,u_j(t)}(u_i(u))
P_{s,t,u_k(s)}(u_j(t)),
\end{equation}
assumes the form
\begin{eqnarray*}
&&p_{i,k}(Au^{{1}/{2}},Bu^{{1}/{2}},q^kAsu^{-{1}/{2}})\\
&&\qquad=
\sum_{j=k}^i p_{i,j}(Au^{{1}/{2}},Bu^{
{1}/{2}},q^jAtu^{-{1}/{2}})
p_{j,k}(At^{{1}/{2}},Bt^{{1}/{2}},q^kAst^{-{1}/{2}}) .
\end{eqnarray*}
Therefore (\ref{ck2}) follows from (\ref{dck}) with
$(j,k,N)\to(i,j,k)$, $a=Au^{{1}/{2}}$, $b=Bu^{{1}/{2}}$,
$c=q^kAst^{-{1}/{2}}$ and $m=(t/u)^{{1}/{2}}$ .
\end{pf}

Let $(Y_t)_{t\in I}$ be a Markov process defined by the above
Markov family $(\pi_t,\break P_{s,t,y_k(s)})$.
%(Since $q>0$ and $A>0$, trajectories of $(Y_t)$ are strictly
%increasing.) %if $q>0$
%and $A<0$ then $(Y_t)$ is strictly decreasing. If $q<0$ then
%$(Y_t)$ is neither increasing nor decreasing. Note that if $q=0$
%then $(Y_t)$ is purely deterministic, moving along the path
%$(u_0(t))_{t\in I}$.

Note that at the end-points of $I$, $Y_{C/(q^N A)}$ is degenerate at
$\frac{1+ACq^N}{2(q^N AC)^{{1}/{2}}}$, and $Y_{1/(AB)}$ is
degenerate at $\frac{A+B}{2(AB)^{{1}/{2}}}$ [compare Proposition
\ref{P:bridge}(i)].

Expressions for conditional expectations and conditional variances
are exactly the same as in the absolutely continuous case with $D=q^{-N}A^{-1}$.
\begin{proposition}
For the process $(Y_t)_{t\in I}$ defined above,
\begin{eqnarray*}
{\mathbb E}(Y_t)&=&\frac{(1-q^N)A(Bt+C)-(1-q^N A^2t)(1-BC)}{2At^{
{1}/{2}}(BC-q^N)} ,
\\
\operatorname{Cov}(Y_s,Y_t)
&=&(1-q)(1-q^N)(1-AC)(1-BC)(q^NA-B)(q^N
As-C)\\
&&{}\times(1-ABt)\bigl(4q(st)^{{1}/{2}}(BC-q^N)^2(BC-q^{N-1})\bigr)^{-1},
\\
\operatorname{Var}(Y_t)&=&\frac{(1-AC)(1-BC)(1-q^N)(1-q)(1-ABt)(C-q^N
At)}{4qAt(BC-q^N)^2(BC-q^{N-1})} .
\end{eqnarray*}
\end{proposition}
\begin{pf}
The result follows from the fact that the marginal and conditional
distributions of the process $(Y_t)_{t\in I}$ are finite
Askey--Wilson. Therefore one can apply formulas
(\ref{1mom}) and (\ref{awvar}).
% in the form
% \eqref{expd}
%and \eqref{vard}.
The covariance is derived through conditioning
${\mathbb E}( Y_sY_t) ={\mathbb E}(Y_s{\mathbb E}(Y_t|{\mathcal F}_s))$.
\end{pf}

Since we are interested in the harness properties, we want to find
the conditional distributions of the process with conditioning
with respect to the past and the future, jointly.
The following result says that the conditional distribution
of $Y_t$ given the (admissible) values $Y_s=x,Y_u=z$ is the discrete
Askey--Wilson distribution $\nu(dy;a,b,c,d)$ with parameters
\begin{eqnarray*}
a &=& \sqrt{\frac{t}{u}} \bigl(z+\sqrt{z^2-1}\bigr),\qquad b=\sqrt{\frac{t}{u}}
\bigl(z-\sqrt{z^2-1}\bigr),
\\
c &=& \sqrt{\frac{s}{t}} \bigl(x+\sqrt{x^2-1}\bigr),\qquad d=\sqrt{\frac{s}{t}}
\bigl(x-\sqrt{x^2-1}\bigr)
\end{eqnarray*}
(compare Proposition \ref{pfcond}). Using notation (\ref{pmf}), this
formula takes the following, more concise, form.
\begin{proposition}\label{pfcond-d}
Let $(Y_t)_{t\in I}$ be the Markov process defined by $(\pi_t,\break
P_{s,t,y_k(s)})$ given by (\ref{*}) and (\ref{**}) with
parameters $A,B,C,q,N$. Then for any $s,t,u\in I$
such that $s<t<u$, the conditional distribution of $Y_t$ given
${\mathcal F}_{s,u}$ is defined by the discrete Askey--Wilson
distribution
\[
P\bigl(Y_t=y_j(t)|Y_s=y_k(s),Y_u=y_i(u)\bigr)=p_{j-i,k-i} \biggl(q^iAt^{
{1}/{2}},\frac{t^{{1}/{2}}}{q^iAu}
,\frac{q^kAs}{t^{{1}/{2}}} \biggr).
\]
The expressions for the first two conditional moments are the same
as in the absolutely continuous case with $D=q^{-N}A^{-1}$.
\end{proposition}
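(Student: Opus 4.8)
The plan is to follow the proof of Proposition~\ref{pfcond}, replacing the Askey--Wilson densities by the purely discrete weights \eqref{pmf}. First I would use the Markov property of $(Y_t)_{t\in I}$ to write the conditional law of $Y_t$ given $\{Y_s=y_k(s),\,Y_u=y_i(u)\}$ as the ratio
$$P(Y_t=y_j(t)\mid Y_s=y_k(s),\,Y_u=y_i(u))=\frac{P_{s,t,y_k(s)}(y_j(t))\,P_{t,u,y_j(t)}(y_i(u))}{P_{s,u,y_k(s)}(y_i(u))}\,.$$
Since the kernels \eqref{**} are supported on nested index sets, this quantity vanishes unless $i\le j\le k$, which already identifies the support of the conditional law as $\{y_j(t):i\le j\le k\}$.

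The heart of the argument is the algebraic identity obtained by substituting \eqref{**} and expanding each $p_{\cdot,\cdot}$ through \eqref{pmf}, namely
$$\frac{p_{j,k}(A\sqrt{t},B\sqrt{t},q^kAs/\sqrt{t})\;p_{i,j}(A\sqrt{u},B\sqrt{u},q^jAt/\sqrt{u})}{p_{i,k}(A\sqrt{u},B\sqrt{u},q^kAs/\sqrt{u})}=p_{j-i,\,k-i}\!\left(q^iA\sqrt{t},\;\frac{\sqrt{t}}{q^iAu},\;\frac{q^kAs}{\sqrt{t}}\right).$$
I would establish it by the same kind of $q$-Pochhammer bookkeeping that proves \eqref{dck}: the $q$-binomial prefactors telescope to leave exactly the $q$-binomial coefficient occurring in $p_{j-i,k-i}$, and repeated use of the shift rule \eqref{poch1} and the reflection rule \eqref{poch2} reorganizes the remaining $(\cdot)_n$-products into the required shape. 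The only structural difference from \eqref{dck} is that here one checks a quotient identity rather than a summation identity; I expect this computation to be the main obstacle, the rest being routine.

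To recognize the right-hand side as the announced discrete Askey--Wilson law, note that for $x=y_k(s)$, $z=y_i(u)$ one has $q^iA\sqrt{t}=\sqrt{t/u}\,(z+\sqrt{z^2-1})$, $\sqrt{t}/(q^iAu)=\sqrt{t/u}\,(z-\sqrt{z^2-1})$, and $q^kAs/\sqrt{t}=\sqrt{s/t}\,(x+\sqrt{x^2-1})$, while the fourth parameter implicit in $p_{j-i,k-i}$, equal to $1/(q^{k-i}q^iA\sqrt{t})=1/(q^kA\sqrt{t})=\sqrt{s/t}\,(x-\sqrt{x^2-1})$, completes the pair; thus the law is supported on $k-i+1$ atoms, the $(j-i)$-th of which is $y_j(t)$. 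Finally the conditional moments follow immediately: since \eqref{1mom} and \eqref{awvar} are algebraic in the four parameters and valid for every admissible Askey--Wilson law (being equivalent to $\bar w_1$ and $\bar w_2$ integrating to zero), and since the parameters above satisfy $ab=t/u$, $cd=s/t$, $abcd=s/u$ with $a+b=2\sqrt{t/u}\,Y_u$ and $c+d=2\sqrt{s/t}\,Y_s$, substitution and simplification reproduce verbatim the expressions \eqref{pfcondexp}--\eqref{pfcondvar}, i.e.\ exactly the absolutely continuous case.
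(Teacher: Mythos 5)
Your proposal is correct and follows essentially the same route as the paper: the Markov property reduces the problem to the quotient $p_{i,j}\,p_{j,k}/p_{i,k}$ of transition weights, which is then simplified via \eqref{pmf} together with the Pochhammer identities \eqref{poch1} and \eqref{poch2} and recognized as $p_{j-i,k-i}\bigl(q^iA\sqrt{t},\,\sqrt{t}/(q^iAu),\,q^kAs/\sqrt{t}\bigr)$, with the conditional moments then read off from \eqref{1mom} and \eqref{awvar}. The only difference is that you leave the $q$-Pochhammer bookkeeping as an outline, whereas the paper records the specific applications of \eqref{poch1} and \eqref{poch2}; your parameter identifications ($ab=t/u$, $cd=s/t$, $a+b=2\sqrt{t/u}\,Y_u$, $c+d=2\sqrt{s/t}\,Y_s$) are exactly the ones needed.
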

\begin{pf}
Due to the Markov property of the process $(Y_t)_{t\in I}$, to
determine the conditional distribution of $Y_t$ given ${\mathcal
F}_{s,u}$, it suffices to find $P(Y_t=y_j(t)|Y_s=y_k(s),Y_u=y_i(u))$
for any $i,j,k\in\{0,1,\ldots,N\}$ such that $i\le j\le k$. Also
the Markov property implies that this probability can be expressed
in terms of conditional probabilities with respect to the past as
\begin{eqnarray*}
p(j|k,i) &=& P\bigl(Y_t=y_j(t)|Y_s=y_k(s),Y_u=y_i(u)\bigr)
\\
&=&
\frac
{P(Y_u=y_i(u)|Y_t=y_j(t))P(Y_t=y_j(t)|Y_s=y_k(s))}{P(Y_u=y_i(u)|Y_s=y_k(s))}
\\
&=&
\frac{p_{i,j}(Au^{{1}/{2}},Bu^{{1}/{2}},q^jAtu^{-{1}/{2}})
p_{j,k}(At^{{1}/{2}},Bt^{{1}/{2}},q^kAst^{-{1}/{2}})}
{p_{i,k}(Au^{{1}/{2}},Bu^{{1}/{2}},q^kAsu^{-{1}/{2}})}.
\end{eqnarray*}

Expanding the expression for the probability mass functions according
to (\ref{pmf}) we
get
\begin{eqnarray*}
p(j|k,i) &=& \left[\matrix{j
\cr i}\right]\frac{ ({q^{i+1}A}/{B}, qu/(t{q^{j-i}} )_{j-i}}
{(q^iA^2u)_{j+1}} \frac{(ABu, q^jA^2t)_i}{ ({q}/({q^jABt}))_j}
\\
&&{}\times
\frac{(1-q^{2i}A^2u)q^{{i(i+1)}/{2}}}{(-q^jABt)^i}
\\
&&{}\times\left[\matrix{k
\cr j}\right]\frac{ ({q^{j+1}A}/{B}, qt/(s{q^{k-j}} )_{k-j}}
{(q^jA^2t)_{k+1}} \frac{(ABt, q^kA^2s)_j}{ ({q}/({q^kABs}))_k}
\\
&&{}\times
\frac{(1-q^{2j}A^2t)q^{{j(j+1)}/{2}}}{(-q^kABs)^j}
\\
&&{}\times\biggl(\left[\matrix{k
\cr i}\right]\frac{ ({q^{i+1}A}/{B}, qu/(s{q^{k-i}} )_{k-i}}
{(q^iA^2u)_{k+1}}\\
&&\hspace*{17.41pt}{}\times \frac{(ABu, q^kA^2s)_i}{ ({q}/({q^kABs}))_k}
\frac{(1-q^{2i}A^2u)q^{{i(i+1)}/{2}}}{(-q^kABs)^i}\biggr)^{-1}.
\end{eqnarray*}

This can be reduced in several steps. The $q$-binomial symbols
reduce, as in the classical ($q=1$) case to
$\left[{k-i}\atop{j-i}\right]$. Then we\vspace*{-1pt} apply (\ref{poch1}) in the
following situations: (i) $\alpha=\frac{q^{i+1}A}{B}$, $M=j-i$,
$L=k-j$; (ii)\vspace*{1pt} $\alpha=q^iA^2u$, $M=j+1$, $L=k-j$; (iii)
$\alpha=q^jA^2t$, $M=i$, $L=k-i+1$; (iv) $\alpha=q^kA^2s$, $M=i$,
$L=j-i$. Also we apply (\ref{poch2}) for (i) $\alpha=\frac{u}{t}$,
$M=j-i$; (ii) $\alpha=ABt$, $M=j$. Thus
\begin{eqnarray*}
p(j|k,i) &=& \left[\matrix{k-i
\cr j-i}\right] \biggl(\frac{qt/s}{q^{k-j}}, q^{i+j+1}A^2u
\biggr)_{k-j}\\[-2pt]
&&{}\times \biggl(\frac{t}{u}
, q^{k+i}A^2s \biggr)_{j-i}(1-q^{2j}A^2t)q^{{(j-i)(j-i+1)}/{2}}\\[-2pt]
&&\hspace*{0pt}{}\times\biggl({(q^{i+j}A^2t)_{k-i+1} \biggl(\frac{q}{q^{k-i}{u}/{s}} \biggr)_{k-i}
\biggl(-q^{k-i}\frac{s}{u} \biggr)^{j-i}}\biggr)^{-1},
\end{eqnarray*}
which, through comparison with the definition (\ref{pmf}), is easily
identified as the distribution we sought.
\end{pf}
\begin{pf*}{Proof of Proposition \protect\ref{T2}}
Since formulas (\ref{1mom}) and (\ref{awvar}) hold for all
Askey--Wilson distributions,
from Proposition \ref{pfcond-d} we see that the conditional moments
and variances in the discrete case are
also given by formulas from Proposition \ref{condmom}.
Therefore the transformed process,
\[
X_t=\frac{2(1+ABt) T(t)^{{1}/{2}}Y_{T(t)}-(A+B)t-(C+{1}/({q^N A}))}
{\sqrt{(1-q)(1-AC)(1-q^{-N})(1-BC)(1-{B}/({q^N A}))}}\sqrt
{1-q^{-N+1}BC} ,
\]
$t\in J$, is a quadratic harness on $J$ with
$\theta,\eta,\tau,\sigma,\gamma$ defined as in the general case
with $D=q^{-N}A^{-N}$. [Recall that $T(t)$ is the M\"{o}bius
transformation (\ref{T(t)}).]
\end{pf*}

%% discrete case end
%%%%%%%%%%%%%

%%%% moved "Examples"

%s5 ###
\section{Some worked out examples}\label{S:WoE}
This section shows how Theorem \ref{Thm-QH-q} is related to some
previous constructions and how it yields new examples. From examples
that have been previously worked out in detail one can see
that the boundary of the range of parameters is not covered by Theorem
\ref{Thm-QH-q}; in particular it does not cover at all the family of
five Meixner L\'{e}vy processes characterized by the quadratic harness
property in \cite{Wesolowski93}. On the other hand, sometimes
new examples arise when processes
run only on a subinterval of $(0,\infty)$.

Theorem \ref{Thm-QH-q} gives $L_2$-continuous processes on an open
interval, so in applications we extend them to the closure of the time domain.

%s5.1 ###
\subsection{$q$-Meixner processes}

Theorem \ref{Thm-QH-q} allows us to extend \cite{BrycWesolowski03},
Theorem~3.5, to negative $\tau$. (The cases $\gamma=\pm1$ which are
included in
\cite{BrycWesolowski03} are not covered by Theorem \ref{Thm-QH-q}.)
\begin{corollary}
Fix $\tau,\theta\in{\mathbb R}$ and $-1<\gamma<1$, and let
\[
T_0=\cases{
0, &\quad if $\tau\geq0$,\cr
-\tau/(1-\gamma), &\quad if $\tau<0$, $\gamma\geq0$,\cr
-\tau, &\quad if $\tau<0$, $\gamma< 0$.}
\]
Then there exists a Markov process $(X_t)$ on $[T_0,\infty)$
such that (\ref{EQ:cov}), (\ref{EQ:LR}) hold, and (\ref{EQ:q-Var})
holds with parameters $\eta=0,\sigma=0$.
\end{corollary}
\begin{pf} Let $q=\gamma$, $A= 0$, $B= 0$, and
\begin{eqnarray*}
C&=&\cases{ \dfrac{-\theta+\sqrt{ \theta
^2-4 \tau}}{2 \sqrt{1-q}}, &\quad $\theta^2\geq4\tau$,\vspace*{2pt}\cr
\dfrac{-\theta+i\sqrt{4 \tau-\theta^2}}{2 \sqrt{1-q}}, &\quad $\theta^2<
4\tau$,}\\
D&=&\cases{ \dfrac{-\theta
-\sqrt{\theta^2-4 \tau}}{2
\sqrt{1-q}}, &\quad $\theta^2\geq4\tau$,\vspace*{2pt}\cr
\dfrac{-\theta-i\sqrt{4\tau-\theta^2}}{2 \sqrt{1-q}}, &\quad
$\theta^2<4\tau$.}
\end{eqnarray*}
Then (\ref{ABCD}) holds trivially, so
by Theorem \ref{Thm-QH-q} and $L_2$-continuity, $(X_t)$ is well
defined on
$\overline J=[T_0,\infty)$. Straightforward calculation of the
parameters from
(\ref{theta}), (\ref{tau}) and (\ref{gamma}) ends the proof.
\end{pf}

When $\tau<0$, the univariate laws of $X_t$ form the
``sixth'' family to be added to the five cases from
\cite{BrycWesolowski03}, Theorem 3.5. The orthogonal polynomials,
with respect to the law of $X_t$, satisfy the recurrence
\[
xp_n(x;t)=p_{n+1}(x;t)+\theta[n]_q
p_n(x;t)+(t+\tau[n-1]_q)[n]_qp_{n-1}(x;t),
\]
where $[n]_q=(1-q^n)/(1-q)$.
So the polynomials with respect to the standardized law of $X_t/\sqrt
{t}$ are
%
%e5.1 ###
%
\begin{eqnarray}\label{free-binomial}
x\widetilde p_n(x;t)&=&\widetilde p_{n+1}(x;t)+\frac{\theta}{\sqrt{t}}
[n]_q \widetilde p_n(x;t)\nonumber\\[-8pt]\\[-8pt]
&&{}+\biggl(1+\frac{\tau}{t} [n-1]_q\biggr)[n]_q \widetilde
p_{n-1}(x;t).\nonumber
\end{eqnarray}

The same law appears under the name $q$-Binomial law in
\cite{SaitohYoshida00b} for parameters $n=-t/\tau\in\mathbb{N}$,
$\tau
=-p(1-p)\in[-1/4,0)$. When $q\leq0$ and $t=|\tau|$, this law is a
discrete law supported on two roots of $\widetilde p_2$ (see Theorem
\ref{Thm-Favard}).

A justification of relating this law to the Binomial can be given for $q=0$.
In this case, recurrence (\ref{free-binomial}) appears in
\cite{BozejkoBryc04}, (3),
with their $a= \frac{\theta}{\sqrt{t}}$ and their $b=\frac{\tau
}{t}$. By \cite{BozejkoBryc04}, Proposition 2.1,
the law $\nu_t$ of $\frac{1}{\sqrt{\tau}}X_t$ is a free convolution
$\frac{t}{|\tau|}$-fold power of the two-point discrete law that
corresponds to $t=-\tau$. That is, $\nu_t=\nu_{-\tau}^{t/|\tau
|\boxplus}$; in particular,
at $t=-n\tau$, $X_t/\sqrt{\tau}$ has the law that is the $n$-fold
free additive convolution of a centered and standardized two-point law.

%s5.2 ###
\subsection{Bi-Poisson processes}

Next we deduce a version of \cite{BrycMatysiakWesolowski04b},
Theorem 1.2.
Here we again have to exclude the boundary cases $\gamma=\pm1$ as
well as
the case $1+\eta\theta= \max\{\gamma,0\}$.
\begin{corollary}
For $-1< \gamma<1$, and $1+\eta\theta> \max\{\gamma,0\}$ there
exists a Markov process $(X_t)_{t\in[0,\infty)}$ such that
(\ref{EQ:cov}), (\ref{EQ:LR}) hold, and (\ref{EQ:q-Var}) holds
with $\sigma=\tau=0$.
\end{corollary}
\begin{pf}
Let $A=0$, $B= -\frac{\eta}{\sqrt{\eta\theta+1-q}}$, $C=0$, $D=
-\frac{\theta
}{\sqrt{\eta\theta+1-q}}$. Then $BD=\frac{\eta\theta}{\eta
\theta+1-q}<1$.
The condition $qBD<1$ is also satisfied as we assume $\eta\theta+1>0$
when $q<0$.
Thus (\ref{ABCD}) holds and we can apply Theorem \ref{Thm-QH-q}.
From formulas (\ref{eta}) through (\ref{gamma}); the quadratic
harness has parameters $\eta,\theta,\sigma=0,\tau=0,\gamma$, as claimed.
\end{pf}

%Other explicit examples that correspond to \cite[Proposition
%4.3]{BrycMatysiakWesolowski04} and \cite[Theorem
%4.5]{BrycMatysiakWesolowski04} can be worked out, but conditions on
%the range of parameters become cumbersome.

%s5.3 ###
\subsection{Free harness}
Next we indicate the range of parameters that guarantee existence of
the processes
described in \cite{BrycMatysiakWesolowski04}, Proposition 4.3.
Let
%
%e5.2 ###
%
\begin{equation}\label{delta1_gamma1}
\alpha=\frac{\eta+\theta\sigma}{1-\sigma\tau},\qquad
\beta=\frac{\eta\tau+\theta}{1-\sigma\tau}.
\end{equation}
\begin{corollary}%\marginpar{\fbox{corrected}}
\label{Thm-FH} For $0\leq\sigma\tau< 1$, $\gamma=-\sigma\tau$, and
$\eta,\theta$ with $2+\eta\theta+2\sigma\tau\geq0$ and
$1+\alpha\beta> 0$, there exists a Markov process
$(X_t)_{t\in[0,\infty)}$ such that (\ref{EQ:cov}), (\ref{EQ:LR})
and (\ref{EQ:q-Var}) hold.
\end{corollary}
\begin{remark} When $2+\eta\theta+2\sigma\tau<0$, two of the
products in (\ref{ABCD}) are in the ``forbidden region'' $[1,\infty)$,
so Theorem \ref{Thm-QH-q} does not apply. However, the univariate
Askey--Wilson distributions are still well defined.
% Corollary \ref{Thm-FH} holds true under sole assumption
% $1+\alpha\beta\geq
% 0$. This more general result cannot be deduced from Theorem
% two of the products in \eqref{ABCD} are in the ``forbidden region"
%$[1,\infty)$.
\end{remark}
\begin{pf*}{Proof of Corollary \protect\ref{Thm-FH}}
Take $q=0$, and let
\begin{eqnarray*}
A &=& -{\frac{\alpha+ \beta\sigma-
{\sqrt{-4\sigma+
{ ( \alpha-
\beta\sigma) }^2}}}
{2{\sqrt{1 + \alpha\beta}}}},\\
B &=& -{\frac{\alpha+ \beta\sigma+
{\sqrt{-4 \sigma+
{ ( \alpha-
\beta\sigma) }^2}}}
{2{\sqrt{1 + \alpha\beta}}}},
\\
C &=& -{\frac{\beta+ \alpha\tau-
{\sqrt{-4\tau+
{ ( \beta-
\alpha\tau) }^2}}}
{2{\sqrt{1 + \alpha\beta}}}},\\
D &=& - {\frac{\beta+ \alpha\tau+
{\sqrt{-4 \tau+
{ ( \beta-
\alpha\tau) }^2}}}
{2{\sqrt{1 + \alpha\beta}}}}.
\end{eqnarray*}

To verify that $AC\notin[1,\infty)$ we proceed as follows. Note
that
%
%e5.6 ###
%e5.5 ###
%e5.4 ###
%e5.3 ###
%
\begin{eqnarray}
\label{A+B}
A+B&=&-\frac{\alpha+\sigma\beta}{\sqrt{1+\alpha\beta}} ,\\
\label{C+D}
C+D&=&-\frac{\alpha\tau+\beta}{\sqrt{1+\alpha\beta}} ,\\
\label{A-B}
A-B&=&\frac{\sqrt{(\alpha-\sigma\beta)^2-4\sigma}}{\sqrt
{1+\alpha
\beta}},\\
\label{C-D}
C-D&=&\frac{\sqrt{(\beta-\tau\alpha)^2-4\tau}}{\sqrt{1+\alpha
\beta}} .
\end{eqnarray}
Multiplying $(A+B)(C+D)$ and $(A-B)(C-D)$ and using
$ABCD=\sigma\tau$, we get
\[
AC
+\frac{\sigma\tau}{AC}-BC-\frac{\sigma\tau}{BC}=\frac{\sqrt
{(\alpha-\sigma\beta)^2-4\sigma}
\sqrt{(\beta-\tau\alpha)^2-4\tau}}{{1+\alpha\beta}}
\]
and
\[
AC
+\frac{\sigma\tau}{AC}+BC+\frac{\sigma\tau}{BC}=\frac{(\alpha
+\sigma\beta)(\alpha\tau+\beta)}{{1+\alpha
\beta}}.
\]
This gives the following quadratic equation for $AC$:
%
%e5.7 ###
%
\begin{equation}
\label{ACfree}\hspace*{28pt}
AC +\frac{\sigma\tau}{AC}=
\frac{(\alpha+\sigma\beta)(\alpha\tau+\beta)+\sqrt{(\alpha
-\sigma\beta)^2-4\sigma}
\sqrt{(\beta-\tau\alpha)^2-4\tau}}{{2(1+\alpha\beta)}}.
\end{equation}
We now note that a quadratic equation $x+a/x=b$ with $0<a<1$ and
complex $b$ can have a root in $[1,\infty)$ only when $b$ is real
and $b\geq1+a$; this follows from the fact that $x+a/x$ is increasing
for $x>a$, so $x+a/x\geq1+a$ for $x\geq1$.
%Proof: x+a/x>1+a$ as

Suppose, therefore, that the right-hand side of (\ref{ACfree}) is
real and larger than $1+\sigma\tau$. Then calculations lead to
$\sqrt{\eta^2-4\sigma}\sqrt{\theta^2-4\tau}\geq
2+\eta\theta+2\sigma\tau$. The right-hand side is nonnegative by
assumption, so squaring the inequality we get $ ( 1 + \alpha
\beta)
{ ( 1 - \sigma\tau) }^2\leq0$
which contradicts the assumption.

Other cases with $AD,BC,BD$ are handled similarly. Since $ABCD=\sigma
\tau<1$ by assumption,
by Theorem \ref{Thm-QH-q} the
quadratic harness exists.

It remains to calculate the parameters. From $AB=\sigma$, $CD=\tau$
we see that (\ref{sigma}) and (\ref{tau}) give the correct values,
and $\gamma=-\sigma\tau$ from (\ref{gamma}).
To compute the remaining parameters, we re-write the expression
under the square root in the denominator of (\ref{eta}) as
\begin{eqnarray*}
&&(1-AC)(1-BC)(1-AD)(1-BD)\\
&&\qquad= \biggl(1+\sigma\tau
-\biggl(AC+\frac{\sigma\tau}{AC}\biggr) \biggr) \biggl(1+\sigma\tau
-\biggl(BC+\frac{\sigma\tau}{BC}\biggr) \biggr).
\end{eqnarray*}
This is the product of two conjugate
expressions [see (\ref{ACfree}), and its derivation]. A calculation
now simplifies the denominator of (\ref{eta}) to
$({1-\sigma\tau})/{\sqrt{1+\alpha\beta}}$.
%$\frac{1-\sigma\tau}{\sqrt{1+\alpha\beta}}$.
Inserting (\ref{A+B})
and (\ref{C+D}), the numerator of (\ref{eta}) simplifies to
${ ( \alpha-
\beta\sigma)
(1 - \sigma\tau) }/\break
{{\sqrt{1 + \alpha\beta}}} $.
The quotient of these two expressions is $\alpha-
\beta\sigma=\eta$. Similar calculation verifies (\ref{theta}).
\end{pf*}
\subsection{Purely quadratic harness}
The quadratic harness with parameters $\eta=\theta=0$ and $\sigma
\tau>0$ has not been previously constructed.
\begin{corollary}\label{Th_PQH}
For $\sigma,\tau>0$ with $\sigma\tau<1$ and $-1<\gamma<1-2\sqrt
{\sigma\tau}$ there
exists a Markov process $(X_t)_{t\in[0,\infty)}$ such that
(\ref{EQ:cov}), (\ref{EQ:LR}) hold, and (\ref{EQ:q-Var}) holds
with $\eta=\theta=0$.
\end{corollary}
\begin{pf}
Let
\[
q=\frac{4(\gamma+ \sigma\tau) }{ (1 + \gamma+
\sqrt{{ ( 1 - \gamma) }^2 - 4\sigma\tau} )^2 }.
\]
To see that $-1<q<1$, note that for $\gamma+\sigma\tau\ne0$,
\[
q=
\frac{ 1 + \gamma^2 - 2 \sigma\tau-
( 1 + \gamma) \sqrt{{ ( 1 - \gamma) }^2 - 4 \sigma\tau} }{2
( \gamma+ \sigma\tau) },
\]
which gives
%
%e5.8 ###
%
\begin{equation}
\label{q-1}
q-1= \frac{ -2\sqrt{ ( 1 - \gamma)^2 - 4 \sigma\tau} }
{ 1 + \gamma+\sqrt{ ( 1 - \gamma) ^2 - 4\sigma\tau} }<0
\end{equation}
and
%
%e5.9 ###
%
\begin{equation}
\label{q+1}
q+1=
{\frac{ 2 ( 1 + \gamma)}{ 1 + \gamma+{\sqrt{{ ( 1 - \gamma) }^2
- 4 \sigma\tau}} }}>0 .
\end{equation}

Noting that $(1-q)^2+4 q\sigma\tau\geq4
\sigma\tau(1-\sigma\tau)>0$, let
\[
A=-B= \frac{i \sqrt{2\sigma}}{\sqrt{{(1 - q )+
\sqrt{{ ( 1 - q ) }^2 + 4 q \sigma\tau}} }}
\]
and
\[
C=-D= \frac{i \sqrt{2\tau}}{\sqrt{{(1 - q )+
\sqrt{{ ( 1 - q ) }^2 + 4 q \sigma\tau}} }}.
\]
Since $A,B,C,D$ are purely imaginary, we only need to verify
condition $BC<1$ which reads
%
%e5.10 ###
%
\begin{equation}
\label{QQQQ} q + 2\sqrt{\sigma\tau} -1< {\sqrt{{ ( 1- q ) }^2 +
4q\sigma\tau}} .
\end{equation}
This is trivially true when $q + 2\sqrt{\sigma\tau} -1<0$. If $q +
2\sqrt{\sigma\tau} -1\geq0$, squaring both sides we get
$
4(1-q)\sqrt{\sigma\tau}>4(1-q)\sigma\tau
$, which holds true as $q<1$ and $0<\sigma\tau<1$.

Thus quadratic
harness
$(X_t)$ exists by Theorem \ref{Thm-QH-q}, and it remains to verify
that its parameters are as claimed. A straightforward calculation shows
that (\ref{sigma}) and (\ref{tau}) give the correct values of
parameters. It remains to verify that formula (\ref{gamma}) indeed
gives the
correct value of parameter $\gamma$.
Since this calculation is lengthy, we indicate major steps: we
write (\ref{gamma}) as (\ref{gamma-1}),
%$\gamma-1=(q-1)(1+ABCD)/(1-qABCD)$,
and evaluate the right-hand side. Substituting values of
$A,B,C,D$ we get
\[
\frac{(q-1)(1+ABCD)}{1-qABCD}=\frac{(1-q)^2+(1+q)\sqrt
{(1-q)^2+4q\sigma\tau}}{2q}.
\]
Then we use formulas (\ref{q-1}) and (\ref{q+1}) to replace $1-q$
and $1+q$ and note that since $\gamma<1-2\sqrt{\sigma\tau}$ we have
$\gamma<1-2\sigma\tau$ and
\[
\sqrt{(1-q)^2+4q\sigma\tau}=\frac{2 (1-\gamma-2 \sigma\tau)}
{\gamma+\sqrt{(1-\gamma) ^2 -4 \sigma\tau
+1}}.
\]
This eventually simplifies the right-hand side of (\ref{gamma-1}) to
$\gamma-1$, so both uses of parameter $\gamma$ are consistent, as claimed.
\end{pf}
%

%%%%% end of examples
%s6 ###
\section{Concluding observations}
This section contains additional observations that may merit further study.

%s6.1 ###
\subsection{Bridge property}
The following proposition lists combinations of parameters that create
a ``quadratic harness bridge'' between either two-point masses, or
degenerated laws.
\begin{proposition}\label{P:bridge}
Let $(Z_t)_{t\in I}$ be the Markov process from Theorem \ref{T3}.
Assume that $AB\ne0$ so that (\ref{Imax}) defines a bounded interval
$I=(S_1,S_2)$
and extend $Z_t$ to the end-points of $I$
by $L_2$-continuity.
\begin{longlist}
\item
If $AB>0$, then $Z_{S_2}=(1/A+1/B)/\sqrt{1-q}$ is deterministic;
similarly, if $CD\geq0$, then
$Z_{S_1}=(C+D)/\sqrt{1-q}$.
\item
If $q\leq0$ and $CD<0$, then $Z_{S_1}$ takes only two-values. Similarly,
if $q\leq0$ and $AB<0$, then $Z_{S_2}$ is a two-valued random variable.
\item If $CD<0$ and $q>0$, then $Z_{0}$ is purely discrete with
the following law:
%
%e6.1 ###
%
\begin{eqnarray}\label{bridgepC}\qquad
\Pr\biggl(Z_0=\frac{q^k C}{\sqrt{1-q}} \biggr) &=& \frac{(AD, BD)_\infty
(AC,BC)_k}{(D/C,ABCD)_\infty(q,qC/D)_k}q^k,\qquad k\geq0,
\\
%
%e6.2 ###
%
\label{bridgepD}
\Pr\biggl(Z_0=\frac{q^k D}{\sqrt{1-q}}\biggr) &=& \frac{(AC, BC)_\infty
(AD,BD)_k}{(C/D,ABCD)_\infty(q,qD/C)_k}q^k ,\qquad k\geq0 .
\end{eqnarray}
%
%(Here, and in the proof of this part we use the notation introduced in
%Section \ref{Sect:AWdens}.)
\end{longlist}
\end{proposition}
\begin{pf}
We can derive the first two statements from moments which are
easier to compute for $(Y_t)$ instead of $(Z_t)$. In the first case,
$\operatorname{Var}(Y_t)=0$ at the endpoints [see (\ref{ordvar})]; in
the second case
$E( \bar w_2^2(Y_t))=0$ at the end-points. Alternatively, one can
compute the limit of the Askey--Wilson law as in the proof of part (iii).

For part (iii), without loss of generality, assume $|A|\leq|B|$ and
$|C|\leq|D|$. Then
the discrete part of $Z_{s}$ has atoms at
\[
\biggl\{\frac{1}{\sqrt{1-q}} \biggl(q^jC +\frac{s}{C q^j} \biggr)\dvtx j\geq0, q^{2j}C^2>
s \biggr\}
\]
and
\[
\biggl\{\frac{1}{\sqrt{1-q}} \biggl(q^jD +\frac{s}{D q^j} \biggr)\dvtx j\geq0, q^{2j}D^2>
s \biggr\}.
\]

The probabilities can be computed from (\ref{p_j}) with
$c=A\sqrt{s},d=B\sqrt{s}$ and either $a=C/\sqrt{s}, b=D/\sqrt{s}$
for (\ref{bridgepC}) or $a=D/\sqrt{s}, b=C/\sqrt{s}$ for
(\ref{bridgepD}) and converge to (\ref{bridgepC}) and
(\ref{bridgepD}), respectively. To see that the limit distribution
is indeed discrete, we note that
\begin{eqnarray*}
&&\sum_{k=0}^\infty\Pr\biggl(Z_0=\frac{q^k C}{\sqrt{1-q}} \biggr)+\Pr\biggl(Z_0=\frac
{q^k C}{\sqrt{1-q}} \biggr)
\\
&&\qquad=\frac{(AD,BD)_\infty}{(D/C,ABCD)_\infty} {_2\varphi_1}
\left(\matrix{
AC, BC\cr
qC/D} ;q\right)\\
&&\qquad\quad{} + \frac{(AC,BC)_\infty}{(C/D,ABCD)_\infty}
{_2\varphi_1}
\left(\matrix{
AD, BD\cr
qD/C} ;q\right)=1.
\end{eqnarray*}
Here we use hypergeometric function notation
%
%e6.3 ###
%
\begin{equation}
\label{hypergeometric}
{_{r+1}\varphi_r}
\left(\matrix{
a_1, a_2, \ldots, a_{r+1}\cr
b_1,b_2,\ldots,b_r}
;z\right)=\sum_{k=0}^\infty\frac{(a_1,a_2,\ldots
,a_r)_k}{(q,b_1,\ldots,b_r)_k}z^k.
\end{equation}
The identity that gives the final equality is
\cite{Ismail05}, (12.2.21), used with $a=AC, b=BC, c=qC/D$.
\end{pf}

%%%%%%%%%%
%From Proposition \ref{pfcond} we verify that Markov processes defined
%in Proposition \ref{mf} may arise
%by conditioning from another such process.
%Suppose $(\widetilde Y_t)_{t\in(0,\infty)}$ is
% the Markov process with
% marginal densities \eqref{marg-d} and transition
%densities \eqref{trans-d} with some parameters $A,B,C,D$ that satisfy
%conditions from Proposition \ref{Thm-QH-small}.
%Fix $0<T_0<T_1$ in $I(A,B,C,D)$ and $y_0,y_1\in(-1,1)$. Then the
%conditional laws $$\pi_t(U):=\Pr(\widetilde Y_t\in U| \widetilde
%Y_{T_0}=y_0,\widetilde Y_{T_1}=y_1)$$ and the conditional laws
%$$P_{s,t}(x,U):=\Pr(\widetilde Y_t\in U| \widetilde Y_{s}=x,\widetilde
%Y_{T_1}=y_1)$$ are
%the univariate laws and the transition probabilities of the Markov
%process $(Y_t)_{t\in(T_0,T_1)}$ corresponding to the parameters $A,B=
%|B|=1/\sqrt{T_1},
%|D|=\sqrt{T_0},
%More generally, we conjecture that all processes $(Y_t)$ with $AB>0$
%and $CD>0$ can be embedded into each other by a similar method, with
%$A,B,C,D$ determined from the equations
%$$AB=1/T_1, A+B=2 y_1/\sqrt{T_1}, CD=T_0, C+D=2 y_0\sqrt{T_0}.$$

%s6.2 ###
\subsection{Transformations that preserve quadratic harness property}
The basic idea behind the transformation (\ref{Def:X}) %in the proof
%of Theorem \ref{Thm-QH-q}
is that if a covariance
%
%e6.4 ###
%
\begin{equation}\label{more-general-cov}
{\mathbb E}(Z_tZ_s)=c_0+c_1\min\{t,s\}+c_2\max\{t,s\}+c_3 ts,
\end{equation}
factors as $(s-\alpha)(1-t\beta)$ for $s<t$ with $\alpha\beta< 1$,
then it can be
transformed into $\min\{t,s\}$ by a deterministic time change and
scaling.

This transformation\vspace*{1pt} is based on the following group action: if
$A=\left[{a\atop c}\enskip{b\atop d}\right]\in GL_2(\mathbb{R})$ is
invertible, then $A$ acts on stochastic processes
$\mathbf{X}=(X_t)$ by $A(\mathbf{X}):=\mathbf{Y}$ with
$Y_t=(ct+d)X_{T_A(t)}$ where $T_A(t)=(at+b)/(ct+d)$ is the
associated M\"{o}bius transformation. It is easy to check that this is
a (right) group action: $A(B(\mathbf{X}))=(B\times A)(\mathbf{X})$.

If ${\mathbb E}(Z_tZ_s)=(s-\alpha)(1-t\beta)$ for $s<t$ and $\alpha
\beta<1$,
then $\mathbf{X}=A^{-1}(\mathbf{Z})$ with $A=\left[{1\atop-\beta
}\enskip{-\alpha\atop1}\right]$ has
${\mathbb E}(X_tX_s)=\min\{s,t\}$. The easiest way to see this is to note
that $T_A$ is increasing for $\alpha\beta<1$, and by group property
$\mathbf{Z}=A(\mathbf{X})$. So with $s<t$,
${\mathbb E}(Z_sZ_t)=(1-s\beta)(1-t\beta){\mathbb
E}(X_{T_A(s)}X_{T_A(t)} )=(1-s\beta
)(1-t\beta)T_A(s)=(s-\alpha)(1-t\beta)$.

It is clear that, at least locally, this group action preserves
properties of linearity of regression and of quadratic conditional
variance. In fact, one can verify that the general form of the
covariance ${\mathbb E}(X_t,X_s)=c_0+c_1\min\{t,s\}+c_2\max\{t,s\}
+c_3 ts$ is
also preserved, and since this covariance corresponds to (\ref{EQ:LR}),
the latter is also preserved by the group action.

%%% end of bbl
%
\begin{appendix}\label{app}
%s7 ###
\section*{Appendix: Supplement on orthogonal polynomials}
%s7.1 ###
\subsection{General theory}
A standard simplifying condition %in most books on
in the general theory of orthogonal polynomials is that the
orthogonality measure has infinite support. This condition may fail for
the transition probabilities of the Markov process in Theorem \ref
{T1}. Since we did not find a suitable reference, for the reader's
convenience we state the general result in the form we need and
indicate how to modify known proofs to cover the case of discrete
orthogonality measure. According to~\cite{AskeyWilson79}, page 1012,
related results are implicit in some of Chebyshev's
work on continued fractions.
\setcounter{theorem}{0}
\begin{theorem}\label{Thm-Favard} Let $A_n,B_n,C_n$ be real, $n\geq0$
and such that
%
%e7.1 ###
\setcounter{equation}{0}
\begin{equation}
\label{Farvard_condition}
\prod_{k=0}^n A_k C_{k+1}\geq0 \qquad\mbox{for all $n\geq0$}.
\end{equation}
Consider two families of polynomials defined by the recurrences
%
%e7.2 ###
%
\begin{eqnarray}\label{FVD-rec}
x\overline{p}_n(x) &=& A_n \overline p_{n+1}(x)+B_n \overline p_n(x)+C_n
\overline p_{n-1}(x),\qquad n\geq0,
\\
%
%e7.3 ###
%
\label{FVD-rec-m}
x {p}_n(x) &=& p_{n+1}(x)+B_n p_n(x)+A_{n-1}C_n p_{n-1}(x),\qquad n\geq0,
\end{eqnarray}
%
%x \hat{p}_n(x)=\sqrt{A_{n}C_{n+1}} \hat p_{n+1}(x)+B_n \hat p_n(x)+
with the initial conditions $p_0=\overline p_0=1$, $p_{-1}=\overline
p_{-1}=0$. Then:
\begin{longlist}
\item %\label{FVD-i}
Polynomials $\{\overline p_n\}$ are well
defined for all $n\geq0$ such that $\prod_{k=0}^{n-1} A_k \ne0$.
(Here and below, the product for $n=0$ is taken as $1$.)
%defined for all $n\geq0$ such that $\prod_{k=-1}^{n-1} A_k C_{k+1}>
%0$.
%
\item
Monic polynomials $\{ p_n\}$ are defined
for all $n\geq0$. For $n$ such that $\prod_{k=0}^{n-1} A_k \ne0$,
the polynomials differ only by normalization
%
%e7.4 ###
%
\begin{equation}\label{bar_p_To_p}
p_n(x)= \overline p_n(x) \prod_{k=0}^{n-1} A_k.
\end{equation}
%
%For $n$ such that solutions $\hat p_n$ of \eqref{FVD-rec-o} are
%defined,
% p_n(x)= \hat p_n(x)\prod_{k=-1}^{n-1} A_k C_{k+1}.

\item
There exists a probability measure $\nu$
such that both families $\{\overline p_n\}$ and $\{ p_n\}$ are
orthogonal with respect to $\nu$. In particular for all $m,n\geq0$,
%
%e7.5 ###
%
\begin{equation}\label{FVD-ortho}
\int p_n(x) p_m(x)\nu(dx)=\delta_{m,n}\prod_{k=0}^{n-1} A_k C_{k+1}.
\end{equation}
%
%Polynomials $ p_n$ and $\overline p_n$ might be zero on the set of $x

Furthermore, if $N$ is the first positive integer such that $A_{N-1} C_{N}=0$,
then
$\nu(dx)$ is a discrete probability measure supported on the finite
set of
$N\geq1$ real and distinct zeros of the polynomial
$ p_{N}$.
\end{longlist}
\end{theorem}
\begin{pf}
It is clear that recurrence (\ref{FVD-rec}) can be solved (uniquely)
for $\overline p_{n+1}$ as long as $A_0,\ldots, A_n\ne0$ while
recurrence (\ref{FVD-rec-m}) has a unique solution for all $n$. It is
also clear that transformation (\ref{bar_p_To_p}) maps the solutions
of recurrence (\ref{FVD-rec}) to the solutions of (\ref{FVD-rec-m}).

If
$\prod_{k=0}^n A_k C_{k+1}> 0$ for all $n$, then each factor
$A_{n-1}C_n$ must be positive, so measure $\nu(dx)$
exists and (\ref{FVD-ortho}) holds for all $m,n$ by Favard's theorem
as stated, for example, in \cite{Ismail05}, Theorem 2.5.2. %
If the product (\ref{Farvard_condition}) is zero starting from
some~$n$, and $N$ is the first positive integer such that $A_{N-1} C_{N}=0$,
then $N\geq1$, and (\ref{FVD-rec-m}) implies that for $n> N$,
polynomial $ p_n$ is divisible by $ p_{N}$.
So if
$\nu(dx)$ is a discrete measure supported on the finite set of %WWW
$N$ zeros of the polynomial
$ p_{N}$, then once we show that the zeros are real, (\ref{FVD-ortho})
holds trivially if either $n\geq N$ or $m\geq N$.
To see that (\ref{FVD-ortho}) holds when $0\leq m,n\leq N-1$, and to
see that all zeros of $p_N$ are distinct and real, we
%modify slightly the proof of \cite[Theorem 1.3.12]{Dunkl2001}.
apply known arguments.
%The proof goes through the following steps.
First, the proof of
% \cite[Theorem 3.2.2]{Szego1939} shows how \eqref{FVD-rec-m}
%determines the Christoffel--Darboux formula as long as long as the
%coefficients $A_{n}C_{n+1}$ are non-zero. In particular,
\cite{Szego1939}, (3.2.4) (or recursion) implies that
%
%e7.6 ###
%
\begin{equation}\label{CDI}
p_n'(x)p_{n-1}(x)- p'_{n-1}(x) p_n(x)>0 \qquad\mbox{for all $x\in\mathbb
{R}$ and all $1\leq n \leq N$,}\hspace*{-32pt}
\end{equation}
so the proof of \cite{Szego1939}, Theorem 3.3.2, establishes
recurrently that each of the polynomials $p_1,\ldots,p_N$ has real and
distinct zeros. Now let $\lambda_0,\ldots,\lambda_{N-1}$ be the
\mbox{zeros} of $p_N$. The remainder of the proof is an adaptation of
the proof of Theorem 1.3.12 in \cite{Dunkl2001}. (Unfortunately, we
cannot apply \cite{Dunkl2001}, Theorem 1.3.12, directly since the $N$th
polynomial is undefined there.) Let $J=[J_{i,j}]$ be the $N\times N$
Jacobi matrix whose nonzero entries are $J_{n,n}=B_n$,
$n=0,1,\ldots,N-1$ and $J_{n,n+1}=1$, $J_{n+1,n}=A_{n}C_{n+1}$,
$n=0,1,\ldots,N-2$. Then (\ref{FVD-rec-m}) says that vector
$\vec{v}_j=[p_0(\lambda _j),\ldots, p_{N-1}(\lambda_j)]^T$ is the
eigenvector of $J$ with eigenvalue $\lambda_j$.

Let $D$ be the diagonal matrix with diagonal entries
\[
d_j= \Biggl(\prod_{k=0}^{j-1}A_k C_{k+1} \Biggr)^{-1/2}>0,\qquad 0\leq j\leq N-1.
\]
Thus $d_0=1$ and $d_{N-1}=(A_0\cdots A_{N-2}C_1\cdots C_{N-1})^{-1/2}$.
Then $D JD^{-1}$ is a symmetric matrix with the eigenvectors $D \vec
{v}_0,\ldots,D \vec{v}_{N-1}$ which correspond to the distinct
eigenvalues $\lambda_0,\ldots,\lambda_{N-1}$.
So the matrix
\[
\biggl[\frac{1}{\|D \vec{v}_0\|}D \vec{v}_0,
\frac{1}{\|D \vec{v}_1\|}D\vec{v}_1,\ldots,\frac{1}{\|D \vec
{v}_{N-1}\| }D \vec{v}_{N-1} \biggr]
\]
%
% $$ [\frac{1}{}D \vec{v}_0{/\|D \vec{v}_0\|},D \vec{v}_1/\|D \vec
%{v}_1\|,\ldots,D \vec{v}_{N-1}/\|D \vec{v}_{N-1}\| ]$$
has orthonormal columns, and hence also orthonormal rows.
The latter gives (\ref{FVD-ortho}) with
$\nu(dx)=\sum_{j=0}^{N-1} \gamma_j\delta_{\lambda_j}$ where
$\gamma_j=(\sum_{k=0}^{N-1} p_k(\lambda_j)^2 d_k^2)^{-1}>0$ (recall
that \mbox{$p_0=1$}). Note that since $d_0=1$,
from (\ref{FVD-ortho}) applied to $m=n=0$ we see that $\sum\gamma
_j=1$, so $\nu$ is a probability measure.
\end{pf}

As an illustration, for the degenerate measure $\mu=\delta_a$, one
has $A_n=0$, $B_n=a$, $C_n=0$. Here, $N=1$, so the family $\{\bar
p_n(x)\}=\{1\}$ consists of just one polynomial, while the monic family
is infinite,
$\{ p_n(x)\dvtx n\geq0\}=\{(x-a)^n\dvtx n\geq0\}$, and $\nu$ is concentrated
on the set of zeros of $p_1=x-a$.

%s7.2 ###
\subsection{Connection coefficients of Askey--Wilson polynomials}
This section contains a re-statement of the special case of
\cite{AskeyWilson85}, formula (6.1), which we need in this paper.
%$$ { _{r+1}\varphi_r} (\begin{matrix}
% a_0, a1,a\ldots,a_r\\
% b_1,\ldots,b_r
% \end{matrix} ;x )=\sum_{k=1}^\infty\frac{ (a_0,a_1,\ldots,a_r )_k}{
%(q,b_1,\ldots,b_r )_k} x^k$$

%The following is a version of \cite[(6.1)]{AskeyWilson85} with
%permuted parameters, and stated in terms of re-scaled version of
%Askey-Wilson polynomials that we use in this paper.
%The connection coefficients are due to Askey and Wilson
%limit.
%
\begin{theorem}\label{Thm-connection}
Let $\{\bar w_n\}$ be defined by (\ref{AW}). If $a\ne0$ then
%
%e7.7 ###
%
\begin{equation}
\label{connect-AW}
\bar w_n(x;a,b,\widetilde c,\widetilde d)=\sum_{k=0}^n \bar c_{k,n}
\bar w_k(x; a,b,c,d),
\end{equation}
where
%
%e7.8 ###
%
\begin{eqnarray}
\label{bar_c_kn}
\bar c_{k,n} &=& (-1)^k q^{k(k+1)/2}
\nonumber\\
&&{}\times\frac{ (q^{-n},q^{n-1}a b \widetilde c \widetilde d
)_k(a\widetilde c,a\widetilde d)_{n}}{a^{n-k}
(q,q^{k-1}abcd,a\widetilde c,a\widetilde d )_k}\\
&&{}\times{ _4\varphi_3}\left(\matrix{
q^{k-n}, ab\widetilde c\widetilde dq^{n+k-1},a c q^k,ad q^k\vspace*{2pt}\cr
abcd q^{2k}, a\widetilde c q^k, a\widetilde d q^k}
;q\right).\nonumber
\end{eqnarray}
[Recall the hypergeometric function (\ref{hypergeometric}).]

If $a=b=0$ and $cd\widetilde d\ne0$, then
%
%e7.9 ###
%
\begin{eqnarray}\label{bar_c_0}
\bar c_{k,n}&=&(-1)^k q^{k(2n+1-k)/2} \frac{(q^{-n})_kd^{n-k}(\widetilde
d/d)_{n-k}}{(q)_k}\nonumber\\[-8pt]\\[-8pt]
&&{}\times{ _2\varphi_1}\left(\matrix{
q^{-n}, \widetilde c/c\vspace*{2pt}\cr
q^{k+1-n}d/\widetilde d}
;q c/\widetilde d\right).\nonumber
\end{eqnarray}
Since $d^m(\widetilde d/d)_m=\prod_{j=0}^{m-1}(d-q^j \widetilde d)$,
expression (\ref{bar_c_0}) is also well defined when $d=0$. Similarly,
it is well defined for $c=0,\widetilde d =0$ [see (\ref{hypergeometric})].
\end{theorem}
\begin{pf}
%With $x=\cos\theta$,
% \label{4fi3}
% \bar w_n(x;a,b,c,d)=\frac{ (ac, ad )_n}{a^n} { _4\varphi_3} (
% q^{-n}, abcdq^{n-1},a e^{i\theta},ae^{-i\theta}\\
% ab, ac, ad
% \end{matrix} ;q ),
The monic form of the Askey--Wilson polynomials $\{\widetilde w_n\}$
and $\{\bar w_n\}$ is the same. Applying (\ref{bar_p_To_p}) twice we
see that
%
%e7.10 ###
%
\begin{equation}\label{AW2AW}
\widetilde w_n(x;a,b,c,d)=(ab)_n\bar w_n(x;a,b,c,d).
\end{equation}
Since our $\widetilde w_n$ is denoted by $p_n$ in \cite
{AskeyWilson85}, formula (\ref{connect-AW}) is recalculated from
\cite{AskeyWilson85}, (6.1), with swapped parameters $a,d$ and with
$\beta=b$, $\gamma=\widetilde c$, $\alpha=\widetilde d$.

To prove the second part, first take $b=0$ and all other parameters
nonzero to write
%
%e7.11 ###
%
\begin{equation}\label{pppp****}\qquad
\bar c_{k,n}=(-1)^k q^{k(k+1)/2}\frac{(a\widetilde c,a\widetilde
d)_n(q^{-n})_k}{a^{n-k}}
{ _3\varphi_2}\left(\matrix{
q^{k-n}, a c q^k,ad q^k\vspace*{2pt}\cr
a\widetilde c q^k, a\widetilde d q^k};q\right).
\end{equation}

Then we apply the limiting case of Sears transformation
\cite{Ismail05}, Theorem 12.4.2,
to rewrite
\[
{_3\varphi_2}\left(\matrix{
q^{k-n}, a c q^k,ad q^k\cr
a\widetilde c q^k, a\widetilde d q^k};q\right)=\frac{(ad q^k)^{n-k}
(\widetilde d/d)_{n-k}}{(a\widetilde d q^k)_{n-k}}
{ _3\varphi_2}\left(\matrix{
q^{k-n}, a d q^k,\widetilde c/c\cr
a\widetilde c q^k, q^{k+1-n}
d/\widetilde d } ;q\right).
\]
This allows us to take the limit $a\to0$ in (\ref{pppp****}), proving
(\ref{bar_c_0}).
\end{pf}
\end{appendix}

\section*{Acknowledgments}
The authors thank %Marek Bo\.{z}ejko for drawing our attention to
%``spurious atom" in the transition
%probabilities of free Brownian motion in \cite[Section 5.3]{Biane98}.
%and
Persi Diaconis, Mourad Ismail, Wojciech Matysiak and Ryszard Szwarc for
helpful discussions and valuable information. We appreciate thorough
comments by the referee which helped to improve the presentation.
%This research was partially supported by NSF
%grant \#DMS-0904720, and by Taft Research Seminar 2008-2009.

% imsref loaded by lrinkeviciute, 2009-11-25 11:22:02
%

%
\printaddresses


\begin{thebibliography}{34}

%b1 ###
\bibitem{Anshelevich01}
%
\begin{barticle}[mr]
\bauthor{\bsnm{Anshelevich},~\bfnm{Michael}\binits{M.}}
(\byear{2003}).
\btitle{Free martingale polynomials}.
\bjournal{J. Funct. Anal.}
\bvolume{201}
\bpages{228--261}.
\bid{doi={10.1016/S0022-1236(03)00061-2}, mr={1986160}}
\end{barticle}
%
\endbibitem

%b2 ###
\bibitem{Anshelevich04IMRN}
%
\begin{barticle}[mr]
\bauthor{\bsnm{Anshelevich},~\bfnm{Michael}\binits{M.}}
(\byear{2004}).
\btitle{Appell polynomials and their relatives}.
\bjournal{Int. Math. Res. Not. IMRN}
\banumber{65}
\bpages{3469--3531}.
\bid{doi={10.1155/S107379280413345X}, mr={2101359}}
\end{barticle}
%
\endbibitem

%b3 ###
\bibitem{AskeyWilson79}
%
\begin{barticle}[mr]
\bauthor{\bsnm{Askey},~\bfnm{Richard}\binits{R.}} \AND
\bauthor{\bsnm{Wilson},~\bfnm{James}\binits{J.}}
(\byear{1979}).
\btitle{A set of orthogonal polynomials that generalize the {R}acah
coefficients or {$6-j$} symbols}.
\bjournal{SIAM J. Math. Anal.}
\bvolume{10}
\bpages{1008--1016}.
\bid{doi={10.1137/0510092}, mr={541097}}
\end{barticle}
%
\endbibitem

%b4 ###
\bibitem{AskeyWilson85}
%
\begin{barticle}[mr]
\bauthor{\bsnm{Askey},~\bfnm{Richard}\binits{R.}} \AND
\bauthor{\bsnm{Wilson},~\bfnm{James}\binits{J.}}
(\byear{1985}).
\btitle{Some basic hypergeometric orthogonal polynomials that generalize
{J}acobi polynomials}.
\bjournal{Mem. Amer. Math. Soc.}
\bvolume{54}
\bpages{iv+55}.
\bid{mr={783216}}
\end{barticle}
%
\endbibitem

%b5 ###
\bibitem{BakryMazet03}
%
\begin{bincollection}[mr]
\bauthor{\bsnm{Bakry},~\bfnm{Dominique}\binits{D.}} \AND
\bauthor{\bsnm{Mazet},~\bfnm{Olivier}\binits{O.}}
(\byear{2003}).
\btitle{Characterization of {M}arkov semigroups on {$\Bbb R$}
associated to
some families of orthogonal polynomials}.
In \bbooktitle{S\'eminaire de {P}robabilit\'es {XXXVII}}.
\bseries{Lecture Notes in Math.}
\bvolume{1832}
\bpages{60--80}.
\bpublisher{Springer}, \baddress{Berlin}.
\bid{mr={2053041}}
\end{bincollection}
%
\endbibitem

%b6 ###
\bibitem{Biane98}
%
\begin{barticle}[mr]
\bauthor{\bsnm{Biane},~\bfnm{Philippe}\binits{P.}}
(\byear{1998}).
\btitle{Processes with free increments}.
\bjournal{Math. Z.}
\bvolume{227}
\bpages{143--174}.
\bid{doi={10.1007/PL00004363}, mr={1605393}}
\end{barticle}
%
\endbibitem

%b7 ###
\bibitem{BozejkoBryc04}
%
\begin{barticle}[mr]
\bauthor{\bsnm{Bo{\.z}ejko},~\bfnm{Marek}\binits{M.}} \AND
\bauthor{\bsnm{Bryc},~\bfnm{W{\l}odzimierz}\binits{W.}}
(\byear{2006}).
\btitle{On a class of free {L}\'evy laws related to a regression problem}.
\bjournal{J. Funct. Anal.}
\bvolume{236}
\bpages{59--77}.
\bid{doi={10.1016/j.jfa.2005.09.010}, mr={2227129}}
\end{barticle}
%
\endbibitem

%b8 ###
\bibitem{BrycMatysiakWesolowski04}
%
\begin{barticle}[mr]
\bauthor{\bsnm{Bryc},~\bfnm{W{\l}odzimierz}\binits{W.}},
\bauthor{\bsnm{Matysiak},~\bfnm{Wojciech}\binits{W.}} \AND
\bauthor{\bsnm{Weso{\l}owski},~\bfnm{Jacek}\binits{J.}}
(\byear{2007}).
\btitle{Quadratic harnesses, {$q$}-commutations, and orthogonal martingale
polynomials}.
\bjournal{Trans. Amer. Math. Soc.}
\bvolume{359}
\bpages{5449--5483}.
\bid{doi={10.1090/S0002-9947-07-04194-3}, mr={2327037}}
\end{barticle}
%
\endbibitem

%b9 ###
\bibitem{BrycMatysiakWesolowski04b}
%
\begin{barticle}[mr]
\bauthor{\bsnm{Bryc},~\bfnm{W{\l}odzimierz}\binits{W.}},
\bauthor{\bsnm{Matysiak},~\bfnm{Wojciech}\binits{W.}} \AND
\bauthor{\bsnm{Weso{\l}owski},~\bfnm{Jacek}\binits{J.}}
(\byear{2008}).
\btitle{The bi-{P}oisson process: A quadratic harness}.
\bjournal{Ann. Probab.}
\bvolume{36}
\bpages{623--646}.
\bid{doi={10.1214/009117907000000268}, mr={2393992}}
\end{barticle}
%
\endbibitem

%b10 ###
\bibitem{BrycWesolowski03}
%
\begin{barticle}[mr]
\bauthor{\bsnm{Bryc},~\bfnm{W{\l}odzimierz}\binits{W.}} \AND
\bauthor{\bsnm{Weso{\l}owski},~\bfnm{Jacek}\binits{J.}}
(\byear{2005}).
\btitle{Conditional moments of {$q$}-{M}eixner processes}.
\bjournal{Probab. Theory Related Fields}
\bvolume{131}
\bpages{415--441}.
\bid{doi={10.1007/s00440-004-0379-2}, mr={2123251}}
\end{barticle}
%
\endbibitem

%b11 ###
\bibitem{BrycWesolowski04}
%
\begin{barticle}[mr]
\bauthor{\bsnm{Bryc},~\bfnm{W{\l}odzimierz}\binits{W.}} \AND
\bauthor{\bsnm{Weso{\l}owski},~\bfnm{Jacek}\binits{J.}}
(\byear{2007}).
\btitle{Bi-{P}oisson process}.
\bjournal{Infin. Dimens. Anal. Quantum Probab. Relat. Top.}
\bvolume{10}
\bpages{277--291}.
\bid{doi={10.1142/S0219025707002737}, mr={2337523}}
\end{barticle}
%
\endbibitem

%b12 ###
\bibitem{Diaconis08r}
%
\begin{barticle}[mr]
\bauthor{\bsnm{Diaconis},~\bfnm{Persi}\binits{P.}},
\bauthor{\bsnm{Khare},~\bfnm{Kshitij}\binits{K.}} \AND
\bauthor{\bsnm{Saloff-Coste},~\bfnm{Laurent}\binits{L.}}
(\byear{2008}).
\btitle{Rejoinder: Gibbs sampling, exponential families and orthogonal polynomials}.
\bjournal{Statist. Sci.}
\bvolume{23}
\bpages{196--200}.
\bid{doi={10.1214/07-STS252}, mr={2446500}}
\end{barticle}
%
\endbibitem

%b13 ###
\bibitem{Dunkl2001}
%
\begin{bbook}[mr]
\bauthor{\bsnm{Dunkl},~\bfnm{Charles~F.}\binits{C.~F.}} \AND
\bauthor{\bsnm{Xu},~\bfnm{Yuan}\binits{Y.}}
(\byear{2001}).
\btitle{Orthogonal Polynomials of Several Variables}.
\bseries{Encyclopedia of Mathematics and Its Applications}
\bvolume{81}.
\bpublisher{Cambridge Univ. Press}, \baddress{Cambridge}.
\bid{mr={1827871}}%
\end{bbook}%
%
\endbibitem%

%b14 ###
\bibitem{Feinsilver86}
%
\begin{barticle}[mr]
\bauthor{\bsnm{Feinsilver},~\bfnm{Philip}\binits{P.}}
(\byear{1986}).
\btitle{Some classes of orthogonal polynomials associated with martingales}.
\bjournal{Proc. Amer. Math. Soc.}
\bvolume{98}
\bpages{298--302}.
\bid{doi={10.2307/2045702}, mr={854037}}
\end{barticle}
%
\endbibitem

%b15 ###
\bibitem{Hammersley}
%
\begin{bincollection}[mr]
\bauthor{\bsnm{Hammersley},~\bfnm{J.~M.}\binits{J.~M.}}
(\byear{1967}).
\btitle{Harnesses}.
In \bbooktitle{Proc. {F}ifth {B}erkeley {S}ymp. {M}ath. {S}tatist.
Probab. {III}: {P}hysical
{S}ciences}
\bpages{89--117}.
\bpublisher{Univ. California Press}, \baddress{Berkeley, CA}.
\bid{mr={0224144}}
\end{bincollection}
%
\endbibitem

%b16 ###
\bibitem{HiaiPetz00}
%
\begin{bbook}[mr]
\bauthor{\bsnm{Hiai},~\bfnm{Fumio}\binits{F.}} \AND
\bauthor{\bsnm{Petz},~\bfnm{D{\'e}nes}\binits{D.}}
(\byear{2000}).
\btitle{The Semicircle Law, Free Random Variables and Entropy}.
\bseries{Mathematical Surveys and Monographs}
\bvolume{77}.
\bpublisher{Amer. Math. Soc.}, \baddress{Providence, RI}.
\bid{mr={1746976}}
\end{bbook}
%
\endbibitem

%b17 ###
\bibitem{Ismail05}
%
\begin{bbook}[mr]
\bauthor{\bsnm{Ismail},~\bfnm{Mourad E.~H.}\binits{M.~E.~H.}}
(\byear{2005}).
\btitle{Classical and Quantum Orthogonal Polynomials in One Variable}.
\bseries{Encyclopedia of Mathematics and Its Applications}
\bvolume{98}.
\bpublisher{Cambridge Univ. Press}, \baddress{Cambridge}.
%Richard A.
%Askey}.
\bid{mr={2191786}}
\end{bbook}
%
\endbibitem

%b18 ###
\bibitem{KoekoekSwarttouw}
%
\begin{bmisc}[auto:SpringerTagBib|2009-01-14|16:51:27]
\bauthor{\bsnm{Koekoek},~\bfnm{R.}\binits{R.}} \AND
\bauthor{\bsnm{Swarttouw},~\bfnm{R.~F.}\binits{R.~F.}}
(\byear{1998}).
\bhowpublished{The Askey scheme of hypergeometric orthogonal
polynomials and its $q$-analogue. Report 98-17, Delft Univ. Technology.
Available at} \url{http://fa.its.tudelft.nl/\textasciitilde koekoek/askey.html}.
\end{bmisc}
%
\endbibitem

%b19 ###
\bibitem{Lytvynov03}
%
\begin{barticle}[mr]
\bauthor{\bsnm{Lytvynov},~\bfnm{Eugene}\binits{E.}}
(\byear{2003}).
\btitle{Polynomials of {M}eixner's type in infinite dimensions---{J}acobi
fields and orthogonality measures}.
\bjournal{J. Funct. Anal.}
\bvolume{200}
\bpages{118--149}.
\bid{doi={10.1016/S0022-1236(02)00081-2}, mr={1974091}}
\end{barticle}
%
\endbibitem

%b20 ###
\bibitem{MansuyYor04}
%
\begin{barticle}[mr]
\bauthor{\bsnm{Mansuy},~\bfnm{Roger}\binits{R.}} \AND
\bauthor{\bsnm{Yor},~\bfnm{Marc}\binits{M.}}
(\byear{2005}).
\btitle{Harnesses, {L}\'evy bridges and {\it{M}onsieur {J}ourdain}}.
\bjournal{Stochastic Process. Appl.}
\bvolume{115}
\bpages{329--338}.
\bid{doi={10.1016/j.spa.2004.09.001}, mr={2111197}}
\end{barticle}
%
\endbibitem

%b21 ###
\bibitem{NassrallahRahman85}
%
\begin{barticle}[mr]
\bauthor{\bsnm{Nassrallah},~\bfnm{B.}\binits{B.}} \AND
\bauthor{\bsnm{Rahman},~\bfnm{Mizan}\binits{M.}}
(\byear{1985}).
\btitle{Projection formulas, a reproducing kernel and a generating
function for
{$q$}-{W}ilson polynomials}.
\bjournal{SIAM J. Math. Anal.}
\bvolume{16}
\bpages{186--197}.
\bid{doi={10.1137/0516014}, mr={772878}}
\end{barticle}
%
\endbibitem

%b22 ###
\bibitem{MasatoshiStokman04}
%
\begin{bincollection}[mr]
\bauthor{\bsnm{Noumi},~\bfnm{Masatoshi}\binits{M.}} \AND
\bauthor{\bsnm{Stokman},~\bfnm{Jasper~V.}\binits{J.~V.}}
(\byear{2004}).
\btitle{Askey--{W}ilson polynomials: An affine {H}ecke algebra approach}.
In \bbooktitle{Laredo {L}ectures on {O}rthogonal {P}olynomials and {S}pecial
{F}unctions}.
\bseries{Adv. Theory Spec. Funct. Orthogonal Polynomials}
\bpages{111--144}.
\bpublisher{Nova Sci. Publ.}, \baddress{Hauppauge, NY}.
\bid{mr={2085854}}
\end{bincollection}
%
\endbibitem

%b23 ###
\bibitem{NualartSchouten00}
%
\begin{barticle}[mr]
\bauthor{\bsnm{Nualart},~\bfnm{David}\binits{D.}} \AND
\bauthor{\bsnm{Schoutens},~\bfnm{Wim}\binits{W.}}
(\byear{2000}).
\btitle{Chaotic and predictable representations for {L}\'evy processes}.
\bjournal{Stochastic Process. Appl.}
\bvolume{90}
\bpages{109--122}.
\bid{doi={10.1016/S0304-4149(00)00035-1}, mr={1787127}}
\end{barticle}
%
\endbibitem

%b24 ###
\bibitem{SaitohYoshida00b}
%
\begin{barticle}[mr]
\bauthor{\bsnm{Saitoh},~\bfnm{Naoko}\binits{N.}} \AND
\bauthor{\bsnm{Yoshida},~\bfnm{Hiroaki}\binits{H.}}
(\byear{2000}).
\btitle{A {$q$}-deformed {P}oisson distribution based on orthogonal
polynomials}.
\bjournal{J. Phys. A}
\bvolume{33}
\bpages{1435--1444}.
\bid{doi={10.1088/0305-4470/33/7/311}, mr={1746884}}
\end{barticle}
%
\endbibitem

%b25 ###
\bibitem{Schoutens00}
%
\begin{bbook}[mr]
\bauthor{\bsnm{Schoutens},~\bfnm{Wim}\binits{W.}}
(\byear{2000}).
\btitle{Stochastic Processes and Orthogonal Polynomials}.
\bseries{Lecture Notes in Statistics}
\bvolume{146}.
\bpublisher{Springer}, \baddress{New York}.
\bid{mr={1761401}}
\end{bbook}
%
\endbibitem

%b26 ###
\bibitem{SchoutensTeugels98}
%
\begin{barticle}[mr]
\bauthor{\bsnm{Schoutens},~\bfnm{Wim}\binits{W.}} \AND
\bauthor{\bsnm{Teugels},~\bfnm{Jozef~L.}\binits{J.~L.}}
(\byear{1998}).
\btitle{L\'evy processes, polynomials and martingales}.
\bjournal{Comm. Statist. Stochastic Models}
\bvolume{14}
\bpages{335--349}.
\bid{mr={1617536}}
\end{barticle}
%
\endbibitem

%b27 ###
\bibitem{SoleUtzet08}
%
\begin{barticle}[mr]
\bauthor{\bsnm{Sol{\'e}},~\bfnm{Josep~Llu{\'{\i}}s}\binits{J.~L.}} \AND
\bauthor{\bsnm{Utzet},~\bfnm{Frederic}\binits{F.}}
(\byear{2008}).
\btitle{On the orthogonal polynomials associated with a {L}\'evy process}.
\bjournal{Ann. Probab.}
\bvolume{36}
\bpages{765--795}.
\bid{doi={10.1214/07-AOP343}, mr={2393997}}
\end{barticle}
%
\endbibitem

%b28 ###
\bibitem{SoleUtzet08b}
%
\begin{barticle}[mr]
\bauthor{\bsnm{Sol{\'e}},~\bfnm{Josep~Llu{\'{\i}}s}\binits{J.~L.}} \AND
\bauthor{\bsnm{Utzet},~\bfnm{Frederic}\binits{F.}}
(\byear{2008}).
\btitle{Time--space harmonic polynomials relative to a {L}\'evy process}.
\bjournal{Bernoulli}
\bvolume{14}
\bpages{1--13}.
\bid{doi={10.3150/07-BEJ6173}, mr={2401651}}
\end{barticle}
%
\endbibitem

%b29 ###
\bibitem{Stokman1997}
%
\begin{barticle}[mr]
\bauthor{\bsnm{Stokman},~\bfnm{Jasper~V.}\binits{J.~V.}}
(\byear{1997}).
\btitle{Multivariable {$BC$} type {A}skey-{W}ilson polynomials with partly
discrete orthogonality measure}.
\bjournal{Ramanujan J.}
\bvolume{1}
\bpages{275--297}.
\bid{doi={10.1023/A:1009757113226}, mr={1606922}}
\end{barticle}
%
\endbibitem

%b30 ###
\bibitem{StokmanKoornwinder98}
%
\begin{barticle}[mr]
\bauthor{\bsnm{Stokman},~\bfnm{J.~V.}\binits{J.~V.}} \AND
\bauthor{\bsnm{Koornwinder},~\bfnm{T.~H.}\binits{T.~H.}}
(\byear{1998}).
\btitle{On some limit cases of {A}skey--{W}ilson polynomials}.
\bjournal{J. Approx. Theory}
\bvolume{95}
\bpages{310--330}.
\bid{doi={10.1006/jath.1998.3209}, mr={1652876}}
\end{barticle}
%
\endbibitem

%b31 ###
\bibitem{Szego1939}
%
\begin{bbook}[auto:SpringerTagBib|2009-01-14|16:51:27]
\bauthor{\bsnm{Szeg{\"{o}}},~\bfnm{G.}\binits{G.}}
(\byear{1939}).
\btitle{Orthogonal {P}olynomials}.
\bseries{American Mathematical Society Colloquium Publications}
\bvolume{23}.
\bpublisher{Amer. Math. Soc.}, \baddress{New York}.
\end{bbook}
%
\endbibitem

%b32 ###
\bibitem{UchiyamaSasamotoWadati04}
%
\begin{barticle}[mr]
\bauthor{\bsnm{Uchiyama},~\bfnm{Masaru}\binits{M.}},
\bauthor{\bsnm{Sasamoto},~\bfnm{Tomohiro}\binits{T.}} \AND
\bauthor{\bsnm{Wadati},~\bfnm{Miki}\binits{M.}}
(\byear{2004}).
\btitle{Asymmetric simple exclusion process with open boundaries and
{A}skey--{W}ilson polynomials}.
\bjournal{J. Phys. A}
\bvolume{37}
\bpages{4985--5002}.
\bid{doi={10.1088/0305-4470/37/18/006}, mr={2065218}}
\end{barticle}
%
\endbibitem

%b33 ###
\bibitem{Wesolowski93}
%
\begin{barticle}[mr]
\bauthor{\bsnm{Weso{\l}owski},~\bfnm{Jacek}\binits{J.}}
(\byear{1993}).
\btitle{Stochastic processes with linear conditional expectation and quadratic
conditional variance}.
\bjournal{Probab. Math. Statist.}
\bvolume{14}
\bpages{33--44}.
\bid{mr={1267516}}
\end{barticle}
%
\endbibitem

%b34 ###
\bibitem{Williams73}
%
\begin{bincollection}[mr]
\bauthor{\bsnm{Williams},~\bfnm{David}\binits{D.}}
(\byear{1973}).
\btitle{Some basic theorems on harnesses}.
In \bbooktitle{Stochastic Analysis (a Tribute to the Memory of {R}ollo
{D}avidson)}
\bpages{349--363}.
\bpublisher{Wiley}, \baddress{London}.
\bid{mr={0362565}}
\end{bincollection}
%
\endbibitem

\end{thebibliography}
\end{document}